\begin{document}

\newtheorem{theorem}{Theorem}[section]
\newtheorem{corollary}[theorem]{Corollary}
\newtheorem{definition}[theorem]{Definition}
\newtheorem{conjecture}[theorem]{Conjecture}
\newtheorem{question}[theorem]{Question}
\newtheorem{lemma}[theorem]{Lemma}
\newtheorem{proposition}[theorem]{Proposition}
\newtheorem{example}[theorem]{Example}
\newenvironment{proof}{\noindent {\bf
Proof.}}{\rule{3mm}{3mm}\par\medskip}
\newcommand{\remark}{\medskip\par\noindent {\bf Remark.~~}}
\newcommand{\pp}{{\it p.}}
\newcommand{\de}{\em}

\newcommand{\JEC}{{\it Europ. J. Combinatorics},  }
\newcommand{\JCTB}{{\it J. Combin. Theory Ser. B.}, }
\newcommand{\JCT}{{\it J. Combin. Theory}, }
\newcommand{\JGT}{{\it J. Graph Theory}, }
\newcommand{\ComHung}{{\it Combinatorica}, }
\newcommand{\DM}{{\it Discrete Math.}, }
\newcommand{\ARS}{{\it Ars Combin.}, }
\newcommand{\SIAMDM}{{\it SIAM J. Discrete Math.}, }
\newcommand{\SIAMADM}{{\it SIAM J. Algebraic Discrete Methods}, }
\newcommand{\SIAMC}{{\it SIAM J. Comput.}, }
\newcommand{\ConAMS}{{\it Contemp. Math. AMS}, }
\newcommand{\TransAMS}{{\it Trans. Amer. Math. Soc.}, }
\newcommand{\AnDM}{{\it Ann. Discrete Math.}, }
\newcommand{\NBS}{{\it J. Res. Nat. Bur. Standards} {\rm B}, }
\newcommand{\ConNum}{{\it Congr. Numer.}, }
\newcommand{\CJM}{{\it Canad. J. Math.}, }
\newcommand{\JLMS}{{\it J. London Math. Soc.}, }
\newcommand{\PLMS}{{\it Proc. London Math. Soc.}, }
\newcommand{\PAMS}{{\it Proc. Amer. Math. Soc.}, }
\newcommand{\JCMCC}{{\it J. Combin. Math. Combin. Comput.}, }
\newcommand{\GC}{{\it Graphs Combin.}, }

\title{ On signless Laplacian coefficients of bicyclic graphs  \thanks{
This work is supported by National Natural Science Foundation of
China (No:11271256). }}
\author{Jie Zhang, Xiao-Dong Zhang\thanks{Corresponding  author ({\it E-mail address:}
xiaodong@sjtu.edu.cn)}
\\
{\small Department of Mathematics},
{\small Shanghai Jiao Tong University} \\
{\small  800 DongChuan road, Shanghai, 200240,  P.R. China}\\
 }
\maketitle
 \begin{abstract}
  Let $G$ be a graph of order $n$ and
  $Q_G(x)= det(xI-Q(G))= \sum_{i=1}^n (-1)^i \varphi_i x^{n-i}$
  be the characteristic polynomial of the signless Laplacian matrix of a graph $G$. We
  give some transformations of $G$ which decrease all signless Laplacian coefficients
  in the set $\mathcal{B}(n)$ of all $n$-vertex bicyclic graphs. $\mathcal{B}^1(n)$ denotes
  all n-vertex bicyclic graphs with at least one odd cycle.
  We show that $B_n^1$ (obtained from $C_4$ by adding one edge between two non-adjacent vertices
  and adding $n-4$ pendent vertices at the vertex of degree $3$)
  minimizes all the signless Laplacian coefficients
  in the set $\mathcal{B}^1(n)$.
  Moreover, we prove that $B_n^2$ (obtained from $K_{2,3}$ by adding $n-5$ pendent vertices at one vertex of degree $3$)
  has minimum signless Laplacian coefficients in the set $\mathcal{B}^2(n)$
  of all $n$-vertex bicyclic graphs with two even cycles.

   \end{abstract}

{{\bf Key words:} Signless Laplacian coefficients; TU-subgraph; Bicyclic graph
 }

      {{\bf AMS Classifications:} 05C50, 05C07}.
\vskip 0.5cm

\section{Introduction}

Let $G$ be a simple undirect bicyclic graph. $V(G)$ and $E(G)$ denote its vertex
set and edge set, respectively. For every bicyclic graph $G$, $|E(G)|=|V(G)|+1$.
Let $d(v_i)$ denote the degree of vertex $v_i$,
and let $D(G)=diag(d(v_1),d(v_2),\cdots,d(v_n))$ be the diagonal matrix of $G$.
Furthermore, let $A(G)$ be the adjacent matrix of $G$.
The Laplacian matrix of $G$ is $L(G)=D(G)-A(G)$,
and the Laplacian characteristic polynomial is denoted by
$L_G(x)= det(xI-L(G))=\begin{matrix} \sum_{k=1}^n (-1)^k c_k x^{n-k}\end{matrix}$.
The Laplacian coefficients $c_k(G)$ of a graph $G$ can be
expressed in terms of subtree structures of $G$ by the following
result of Kelmans and Chelnokov \cite{kelmans1974}. Let $F$ be a spanning forest
of $G$ with $k$ components $T_1, T_2, \cdots, T_s$, $T_i$ has
$|V(T_i)|$ vertices, let
$$\gamma(F)=\begin{matrix} \prod_{i=1}^k |V(T_i)| \end{matrix}.$$

\begin{theorem}\label{theorem1.1}(\cite{kelmans1974})
Let $\mathcal{F}_k$ be the set of all spanning forests of $G$ with
exactly $k$ components. Then the Laplacian coefficient $c_{n-k}(G)$
is expressed by $c_{n-k}(G)=\begin{matrix} \sum_{F\in\mathcal{F}_k} \gamma(F) \end{matrix}.$
\end{theorem}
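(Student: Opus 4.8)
The plan is to relate the coefficient $c_{n-k}(G)$ to the principal minors of $L(G)$ and then give each minor a combinatorial reading through a forest-version of the Matrix--Tree Theorem. Writing $\mu_1,\dots,\mu_n$ for the eigenvalues of $L(G)$, we have $L_G(x)=\det(xI-L(G))=\prod_{i=1}^n(x-\mu_i)$, so expanding the product identifies $c_{n-k}$ with the $(n-k)$-th elementary symmetric function of the $\mu_i$. A standard linear-algebra fact says that this symmetric function equals the sum of all $(n-k)\times(n-k)$ principal minors of $L(G)$. Denoting by $L[S]$ the principal submatrix of $L(G)$ indexed by a vertex subset $S$, and matching the sign convention in $L_G(x)=\sum_k(-1)^kc_kx^{n-k}$, this yields
$$c_{n-k}(G)=\sum_{\substack{W\subseteq V(G)\\ |W|=k}}\det L[V(G)\setminus W],$$
where the sum ranges over all $k$-subsets $W$ of $V(G)$.

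The heart of the argument, and the step I expect to be the main obstacle, is evaluating each minor $\det L[V(G)\setminus W]$ combinatorially. For this I would factor $L(G)=NN^{\top}$, where $N$ is the $n\times|E(G)|$ oriented incidence matrix for an arbitrary orientation of $G$. Then $L[S]=N_S N_S^{\top}$, with $N_S$ the submatrix of $N$ on the rows in $S=V(G)\setminus W$, and the Cauchy--Binet formula gives
$$\det L[S]=\sum_{\substack{E'\subseteq E(G)\\ |E'|=|S|}}\bigl(\det N_{S,E'}\bigr)^2,$$
where $N_{S,E'}$ is the square submatrix on rows $S$ and columns $E'$. The technical lemma to establish is that each subdeterminant $\det N_{S,E'}$ is $0$ or $\pm1$, and that it is $\pm1$ precisely when the edge set $E'$ spans a forest each of whose components contains exactly one vertex of $W$. (Since $|E'|=n-k$, such a forest has exactly $k$ components, and $|W|=k$ forces the ``exactly one per component'' condition to hold automatically.) Consequently $\det L[V(G)\setminus W]$ counts exactly those spanning forests with $k$ components that meet $W$ in one vertex per component; specializing $W$ to a single vertex recovers the classical Matrix--Tree Theorem as a consistency check.

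Finally I would interchange the order of summation. Fix a spanning forest $F\in\mathcal{F}_k$ with tree components $T_1,\dots,T_k$. From the combinatorial reading above, $F$ is counted in the term for $W$ if and only if $W$ contains exactly one vertex from each $T_i$, and the number of such choices of $W$ is precisely $\prod_{i=1}^k|V(T_i)|=\gamma(F)$. Summing the contributions of all forests therefore gives
$$c_{n-k}(G)=\sum_{\substack{W\subseteq V(G)\\ |W|=k}}\det L[V(G)\setminus W]=\sum_{F\in\mathcal{F}_k}\gamma(F),$$
which is the claimed identity. The routine parts are the symmetric-function/principal-minor identity and the Cauchy--Binet expansion; the real work lies in the sign-free $\pm1$ evaluation of the incidence subdeterminants and its forest interpretation, which I would prove by induction on the number of edges in $E'$, peeling off a leaf vertex of the forest at each step.
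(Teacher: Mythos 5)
The paper does not actually prove this statement: Theorem~\ref{theorem1.1} is quoted from \cite{kelmans1974} and used as a known tool, so there is no internal proof to compare yours against. Your argument is the standard proof of that classical result, and it is essentially correct: you identify $c_{n-k}$ with the sum of the $(n-k)\times(n-k)$ principal minors of $L(G)$ via the elementary symmetric functions of the eigenvalues, factor $L(G)=NN^{\top}$ through an oriented incidence matrix, expand each minor by Cauchy--Binet, invoke the all-minors matrix--tree lemma (each incidence subdeterminant is $0$ or $\pm1$, and is $\pm1$ exactly when $E'$ is a forest whose components each contain exactly one vertex of $W$), and then exchange the order of summation so that a fixed forest $F\in\mathcal{F}_k$ with trees $T_1,\dots,T_k$ is counted once for every transversal $W$ of its components, i.e.\ $\prod_{i=1}^k|V(T_i)|=\gamma(F)$ times. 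One point needs tightening: your parenthetical claim that $|E'|=n-k$ together with $|W|=k$ makes the ``exactly one vertex of $W$ per component'' condition hold automatically is not right as stated --- a forest with $k$ components can meet $W$ twice in one component and miss another entirely, and precisely those subdeterminants vanish (the rows of $N_{S,E'}$ indexed by the vertices of a component disjoint from $W$ sum to zero, since every edge of that component contributes $+1$ and $-1$ within those rows). What the pigeonhole count actually gives is only the upgrade from ``at least one per component'' to ``exactly one per component.'' Since your final double count uses the correct condition, this is a local repair of the lemma's statement, not a gap in the overall argument.
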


Recently, the study on the Laplacian coefficients have attracted much
attention. Mohar \cite{mohar2007} fist investigate the Laplacian coefficients of
acyclic graphs under the partial order $\preceq$. Zhang et al. \cite{zhang2009}
investigated ordering trees with diameters $3$ and $4$ by the Laplacian
coefficients. $\mbox{Ili}\acute{c}$ [13] determined the n-vertex tree of
fixed diameter which minimizes the Laplacian coefficients.
$\mbox{Ili}\acute{c}$ \cite{ilic2010} determined the n-vertex tree with given matching
number having the minimum Laplacian coefficients. He and Li \cite{he2011}
studied the ordering of all $n$-vertex trees with a perfect matching by
Laplacian coefficients. $\mbox{Ili}\acute{c}$ and
$\mbox{Ili}\acute{c}$ [12] studied the n-vertex trees with fixed pendent
vertex number and 2-degree vertex number which have minimum
Laplacian coefficients. $\mbox{Stevanovi}\acute{c}$ and
$\mbox{Ili}\acute{c}$ \cite{stevanovic2009}
investigated the Laplacian coefficients of unicyclic graphs. Tan \cite{tan2011}
characterized the determined the n-vertex unicyclic graph with given
matching number which minimizes all Laplacian coefficients. He and
Shan \cite{he2010} studied the Laplacian coefficients of bicyclic graphs.

The signless Laplacian matrix of $G$, $Q(G)=D(G)+A(G)$, which is related to $L(G)$,
has also been studied recently (see [1-5,\cite{mirzakhah2012}]).
The signless Laplacian characteristic polynomial is denoted by
$Q_G(x)= det(xI-Q(G))=\begin{matrix} \sum_{i=1}^n (-1)^i \varphi_i x^{n-i}\end{matrix}$.
Using the notation from \cite{cvetkovic2007},\cite{mirzakhah2012}, a TU-subgraph of $G$
is the spanning subgraph of $G$ whose components
are trees or odd unicyclic graphs. Assume that a TU-subgraph H of $G$ contains
$c$ odd unicyclic graphs and $s$ trees $T_1,\cdots, T_s$. The weight of H can
be expressed by $W(H)=4^c\begin{matrix} \prod_{i=1}^s n_i \end{matrix}$,
in which $n_i$ is the number of $T_i$. If $H$ contains no tree, let
$W(H)=4^c$. If $H$ is empty, in other words, $H$ does not exist, let
$W(H)=0$. The signless Laplacian coefficients $\varphi_i(G)$
can be expressed in terms of the weight of TU-subgraphs of $G$.

\begin{theorem}\label{theorem1.2}(\cite{cvetkovic2007},\cite{mirzakhah2012})
Let $G$ be a connected graph. For $\varphi_i$ as above, we have $\varphi_0=1$
and
$$\varphi_i =\sum_{H_i} W(H_i), i=1,\cdots,n;$$
where the summation runs over all TU-subgraphs $H_i$ of $G$ with $i$ edges.
\end{theorem}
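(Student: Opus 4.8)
The plan is to establish this as the signless-Laplacian analogue of the Kelmans--Chelnokov formula (Theorem~\ref{theorem1.1}), using the incidence-matrix factorization of $Q(G)$ together with the Cauchy--Binet expansion. First I would introduce the unsigned vertex-edge incidence matrix $R$ of $G$, the $n\times m$ matrix with $R_{v,e}=1$ when $v$ is an endpoint of $e$ and $0$ otherwise, and record the identity $Q(G)=D(G)+A(G)=RR^{T}$; this is immediate, since $(RR^{T})_{uv}$ counts the edges incident to both $u$ and $v$, producing $d(u)$ on the diagonal and the adjacency count off it. Next I would invoke the standard fact that for a symmetric matrix the coefficients of $\det(xI-Q)$ are the signed sums of its principal minors, so that $\varphi_i$ equals the sum of all $i\times i$ principal minors of $Q$, the case $i=0$ giving the empty minor $\varphi_0=1$. (As a sanity check, the sum-of-squares formula obtained below makes the nonnegativity of each $\varphi_i$ transparent.)

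The core step is to expand each principal minor. Fixing a vertex set $S$ with $|S|=i$, Cauchy--Binet applied to $Q[S,S]=R[S,:]\,R[S,:]^{T}$ yields $\det Q[S,S]=\sum_{|\Omega|=i}\bigl(\det R[S,\Omega]\bigr)^{2}$, the sum running over $i$-subsets $\Omega$ of $E(G)$; hence $\varphi_i=\sum_{|S|=i}\sum_{|\Omega|=i}\bigl(\det R[S,\Omega]\bigr)^{2}$, and everything reduces to evaluating the determinant of a square submatrix of $R$. Here I would prove the key combinatorial lemma: $\det R[S,\Omega]$ vanishes unless $S\subseteq V(\Omega)$ and each connected component of the edge-subgraph $(V(\Omega),\Omega)$ is a tree or a unicyclic graph; moreover a nonzero value forces every unicyclic component to carry an odd cycle, and then $|\det R[S,\Omega]|=2^{c}$, where $c$ is the number of such (odd-)unicyclic components. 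This rests on two elementary facts, both obtained by peeling pendant edges: the incidence submatrix of a tree with one vertex-row deleted has determinant $\pm1$, while the full incidence matrix of a unicyclic graph has determinant $0$ for an even cycle and $\pm2$ for an odd one (the cycle case being the circulant evaluation $1-(-1)^{\ell}$ for a cycle of length $\ell$).

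With the lemma in hand I would reorganize the double sum by fixing $\Omega$ and summing over $S$. If the edge-subgraph of $\Omega$ has a component with more edges than vertices, or a unicyclic component whose cycle is even, the inner sum is $0$. Otherwise, counting the admissible $S$ component by component, each tree component on $n_j$ vertices contributes a factor $n_j$ (the $n_j$ choices of the single omitted vertex, each giving determinant $\pm1$), while each odd-unicyclic component forces all its vertices into $S$ and contributes a factor $4=2^{2}$; since $R$ is block-diagonal across components these contributions multiply, giving $\sum_{S}\bigl(\det R[S,\Omega]\bigr)^{2}=4^{c}\prod_j n_j$. Adjoining the isolated vertices (trivial trees contributing the factor $1$) turns $(V(\Omega),\Omega)$ into a spanning TU-subgraph $H_i$ with exactly $i$ edges and weight $W(H_i)=4^{c}\prod_j n_j$, and this passage from admissible edge-sets $\Omega$ to TU-subgraphs is a bijection. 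Summing over $\Omega$ then yields $\varphi_i=\sum_{H_i}W(H_i)$, as claimed.

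I expect the main obstacle to be the incidence-determinant lemma, and specifically the parity dichotomy by which even cycles contribute nothing and odd cycles contribute the factor $4$ after squaring --- this is exactly what singles out \emph{odd} unicyclic graphs in the definition of a TU-subgraph --- together with the bookkeeping that $|S|=|\Omega|=i$ forces every component to be a tree or unicyclic and pins down the number of admissible choices of $S$.
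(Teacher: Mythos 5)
The paper does not prove this statement at all: Theorem~\ref{theorem1.2} is quoted from the cited references (Cvetkovi\'{c} et al.\ and Mirzakhah--Kiani), so there is no internal proof to compare against. Your argument is correct and complete in outline, and it is essentially the standard proof from those sources: the factorization $Q=RR^{T}$, the expression of $\varphi_i$ as the sum of $i\times i$ principal minors, Cauchy--Binet, and the incidence-determinant lemma (tree blocks with one deleted row give $\pm1$, unicyclic blocks give $0$ or $\pm2$ according to cycle parity), followed by the exchange of summation that converts each admissible edge set into a spanning TU-subgraph of weight $4^{c}\prod_j n_j$. No gaps.
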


From Theorem~\ref{theorem1.2},
it is obvious that for a $n$-vertex connected bicyclic graph $G$, $\varphi_1(G)=2|E(G)|=2(n+1)$.

When $G$ is non-bipartite graph, then $G$ has at least
an odd cycle $C_1$. Every TU-subgraph of $G$ with $n$
edges is obtained by deleting the edges of $E(C_2)\setminus (E(C_1)\cap E(C_2))$.
Therefore,
$$\varphi_n(G) =
\begin{cases}
|E(C_2)\setminus (E(C_1)\cap E(C_2))|,  & \mbox{if }g(C_2)\mbox{ is even} \\
\sum_{i=1}^2 |E(C_i)\setminus (E(C_1)\cap E(C_2))|, & \mbox{if }g(C_2)\mbox{ is odd}.
\end{cases}$$

When $G$ is bipartite graph, $G$ has no odd cycle, then $\varphi_n(G)=0$, and
$\varphi_{n-1}(G)$ counts the number of all spanning trees of $G$.
Every TU-subgraph of $G$ with $n-1$
edges is obtained by deleting one edge $e_1$ of $C_1$ and
one edge $e_2$ of $C_2$ $(e_1\neq e_2)$, respectively.
Thus,
$$\varphi_{n-1}(G) =
\begin{cases}
|E(C_1)||E(C_2)|-|E(C_1\cap C_2)|(|E(C_1\cap C_2)|-1),  & \mbox{if }|E(C_1\cap C_2)|\geq 1 \\
|E(C_1)||E(C_2)|, & \mbox{if }|E(C_1\cap C_2)|= 0.
\end{cases}$$
Moreover, $L(G)$ and $Q(G)$ have the same characteristic
polynomial, so $c_i(G)=\varphi_i(G), i=0,1,2,\cdots,n$, and
the expression of $\varphi_i$ in Theorem~\ref{theorem1.2} is equivalence
to the expression of $c_i$ in Theorem~\ref{theorem1.1}.

The eigenvalues of $L(G)$ and $Q(G)$ are denoted by $\mu_1(G)\geq\cdots\geq\mu_n(G)=0$
and $\nu_1(G)\geq\cdots\geq\nu_n(G)$, respectively. The incidence energy
of $G$, $IE(G)$ for short, is defined as $IE(G)=\sum_{i=1}^n \sqrt{\nu_i(G)}$
(see [7],[8],\cite{jooyandeh2009}).

Mirzakhah and Kiani \cite{mirzakhah2012} presented a connection between the incidence energy
and the signless Laplacian coefficients.

\begin{theorem}\label{theorem1.3}(\cite{mirzakhah2012})
Let $G$ and $G^\prime$ be two graphs of order $n$. If
$\varphi_i(G)\leq\varphi_i(G^\prime)$ for $1\leq i\leq n$,
then $IE(G)\leq IE(G^\prime)$ and $IE(G)< IE(G^\prime)$ if
$\varphi_i(G)< \varphi_i(G^\prime)$ for some $i$ holds.
\end{theorem}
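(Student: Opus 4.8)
The plan is to realize $IE(G)$ as an integral whose integrand is visibly monotone in the coefficients $\varphi_i$, so that the coordinatewise domination $\varphi_i(G)\le\varphi_i(G')$ transfers directly to the integrals. The starting point is that $Q(G)=D(G)+A(G)$ is positive semidefinite, so all its eigenvalues satisfy $\nu_j\ge 0$; comparing $Q_G(x)=\prod_{j=1}^n(x-\nu_j)$ with $\sum_{i=0}^n(-1)^i\varphi_i x^{n-i}$ identifies $\varphi_i$ as the $i$-th elementary symmetric function of $\nu_1,\dots,\nu_n$. In particular every $\varphi_i\ge 0$, and one has the generating identity
$$\sum_{i=0}^n\varphi_i s^i=\prod_{j=1}^n(1+\nu_j s),\qquad s\ge 0.$$
This identity is the bridge between the coefficients and the eigenvalues.

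Next I would establish the Coulson-type integral representation
$$IE(G)=\frac{1}{2\pi}\int_0^\infty\frac{\ln\Big(\sum_{i=0}^n\varphi_i s^i\Big)}{s^{3/2}}\,ds.$$
The cleanest route is to verify this one eigenvalue at a time: using the generating identity, $\ln(\sum_i\varphi_i s^i)=\sum_j\ln(1+\nu_j s)$, and an elementary computation (integrate by parts with $v=-2s^{-1/2}$, then substitute $s=u^2$) gives $\int_0^\infty s^{-3/2}\ln(1+\nu s)\,ds=2\pi\sqrt{\nu}$ for each $\nu\ge0$. Summing over $j$ and dividing by $2\pi$ returns $\sum_j\sqrt{\nu_j}=IE(G)$. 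The same representation can alternatively be obtained from the classical Coulson integral applied to the polynomial $Q_G(x^2)$, whose roots are $\pm\sqrt{\nu_j}$; the key analytic points are the convergence of the integral near $0$ and $\infty$ and the vanishing of the boundary terms in the integration by parts, which hold because $\ln(\sum_i\varphi_i s^i)\sim\varphi_1 s$ as $s\to0$ and $\sim n\ln s$ as $s\to\infty$.

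With this representation in hand the conclusion is immediate. If $\varphi_i(G)\le\varphi_i(G')$ for every $i$, then since each weight $s^i$ is nonnegative we have the pointwise inequality $\sum_i\varphi_i(G)s^i\le\sum_i\varphi_i(G')s^i$ for all $s\ge0$; monotonicity of $\ln$ together with the strictly positive kernel $s^{-3/2}$ then yields $IE(G)\le IE(G')$. Moreover, if $\varphi_i(G)<\varphi_i(G')$ for some $i$, the pointwise inequality is strict on the whole half-line $s>0$, a set of positive measure, so the integral inequality is strict and $IE(G)<IE(G')$.

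The routine parts are the single-eigenvalue integral and the coordinatewise comparison; the only place demanding care, and the step I expect to be the main obstacle, is producing the logarithmic integral representation itself and justifying its convergence and the vanishing boundary terms. It is also worth stressing that the positive semidefiniteness of $Q(G)$ is used twice and is essential: it makes the square roots real so that $IE$ is well defined, and it makes all $\varphi_i$ and all weights $s^i$ nonnegative, which is exactly what turns the domination hypothesis into a pointwise inequality under the logarithm.
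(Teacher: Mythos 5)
The paper contains no proof of this statement to compare against: Theorem~\ref{theorem1.3} is quoted from the cited reference \cite{mirzakhah2012} and used as a black box. Judged on its own, your proof is correct and complete. The identification $\varphi_i=e_i(\nu_1,\dots,\nu_n)\geq 0$ from $Q_G(x)=\prod_j(x-\nu_j)$, the generating identity $\sum_{i=0}^n\varphi_i s^i=\prod_{j=1}^n(1+\nu_j s)$, and the single-eigenvalue evaluation $\int_0^\infty s^{-3/2}\ln(1+\nu s)\,ds=2\pi\sqrt{\nu}$ all check out (after the integration by parts one gets $2\nu\int_0^\infty s^{-1/2}(1+\nu s)^{-1}ds=4\nu\int_0^\infty(1+\nu u^2)^{-1}du=2\pi\sqrt{\nu}$, and the boundary terms vanish since the product $s^{-1/2}\ln(1+\nu s)$ behaves like $\nu s^{1/2}$ at $0$ and like $s^{-1/2}\ln s$ at $\infty$). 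The monotonicity step is also handled correctly: positivity of every $\varphi_i$ and of the kernel $s^{-3/2}$ turns coordinatewise domination into pointwise domination under the logarithm, and the strictness claim is justified because both integrals converge and the difference of the integrands is continuous and strictly positive on all of $(0,\infty)$ once one coefficient inequality is strict (note $\varphi_0=1$ keeps both arguments of the logarithm positive). For context, this is essentially the same mechanism as in the original source: Mirzakhah--Kiani, following Gutman--Kiani--Mirzakhah--Zhou, use a Coulson-type integral representation of $IE$ as an integral of the logarithm of a nonnegative combination of the $\varphi_i$ and conclude monotonicity in exactly this way; your half-line representation with kernel $s^{-3/2}$ is an equivalent and arguably cleaner form of theirs, with the added virtue that you verify the representation eigenvalue by eigenvalue rather than invoking the classical Coulson formula.
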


Mirzakhah and Kiani in \cite{mirzakhah2012} gave some results about the signless
Laplacian coefficients of a graph $G$ and ordered unicyclic graphs
with fixed girth based on the signless Laplacian coefficients.
He and Shan in \cite{he2010} characterize the graph which has minimum
Laplacian coefficients among all bicyclic graphs.
Motivated by these results, we characterize the graphs which have
minimum signless Laplacian coefficients in $\mathcal{B}^1(n)$ and
$\mathcal{B}^2(n)$.

This paper is organized as follows: In the next section, we introduce
some results from the literature which are useful in this paper.
In Section 3, several transformations which simultaneously decrease all the
signless Laplacian coefficients are given.
In Section 4, we order the graphs in several sets, and in each set all graphs
have the same bases.
In Section 5, by using the results of
Section 3 and 4, we prove that $B_n^1$
has minimum signless Laplacian coefficients
in $\mathcal{B}^1(n)$, as well as incidence energy.
Meanwhile $B_n^2$
minimizes all the signless Laplacian coefficients and incidence energy
in the set $\mathcal{B}^2(n)$.

\section{Preliminaries}

Let $G$ be a graph which is not a star, let $v$ be a vertex with degree
$p+1$ in $G$, such that it is adjacent with $\{u, v_1, v_2, \cdots, v_p\}$,
where $\{v_1, v_2, \cdots, v_p\}$ are pendent vertices. The graph
$G^\prime=\sigma(G, v)$ is obtained from deleting edges $vv_1,vv_2,\cdots,vv_p$
and adding edges $uv_1,uv_2,\cdots,uv_p$.

\begin{theorem}\label{theorem2.1}(\cite{mirzakhah2012})
Let $G$ be a connected graph and $G^\prime=\sigma(G,v)$, then
$\varphi_i(G)\geq\varphi_i(G^\prime)$, for every $0\leq i\leq n$,
with equality if and only if either $i\in\{0,1,n\}$ when $G$ is
non-bipartite, or $i\in\{0,1,n-1,n\}$ otherwise.
\end{theorem}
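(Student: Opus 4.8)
The plan is to prove the inequality directly from the TU-subgraph formula of Theorem~\ref{theorem1.2} by building a single parametrization of the TU-subgraphs of both $G$ and $G'=\sigma(G,v)$. Write $P=\{v_1,\dots,v_p\}$ and $Y=V(G)\setminus(\{v\}\cup P)$, and let the \emph{core} $G[Y]$ be the induced subgraph on $Y$, which is literally the same graph in $G$ and in $G'$. Since $v$ has the single core-neighbour $u$ and each $v_k$ is a leaf, every edge of $G$ (resp.\ $G'$) is either a core edge, the edge $uv$, or a pendant edge $vv_k$ (resp.\ $uv_k$). Hence each TU-subgraph $H$ of $G$ is determined by a triple $(F,\varepsilon,S)$, where $F=H[Y]$ is a TU-subgraph of the core, $\varepsilon\in\{0,1\}$ records whether $uv\in H$, and $S\subseteq P$ is the set of attached pendants; the identical triples parametrize the TU-subgraphs of $G'$, now with pendant edges at $u$. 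This correspondence is a bijection preserving $e(H)=e(F)+\varepsilon+|S|$, so I can compare $\varphi_i(G)$ and $\varphi_i(G')$ term by term over a common index set.

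First I would fix $F$ and the number $t=\varepsilon+|S|$ of ``extra'' edges, so that $e(F)+t=i$, and sum the weights over all admissible $(\varepsilon,S)$ with $\varepsilon+|S|=t$. Because a component contributes its order if it is a tree and $4$ if it is odd-unicyclic, I must split according to whether the component $C_u$ of $F$ containing $u$ is a tree of order $a_F$ or odd-unicyclic. Writing $W_0(F)$ for the weight of $F$ with the factor of $C_u$ removed, a short computation using the Pascal-type identity $\binom{p}{t}+\binom{p}{t-1}=\binom{p+1}{t}$ collapses all the $\varepsilon=1$ contributions and leaves
\begin{equation*}
\varphi_i(G)-\varphi_i(G')=\sum_{F}W_0(F)\,\binom{p}{t}\,t\cdot
\begin{cases} a_F-1, & C_u\ \text{a tree},\\ 4, & C_u\ \text{odd-unicyclic},\end{cases}
\end{equation*}
where $t=i-e(F)$ and the sum runs over the TU-subgraphs $F$ of the core. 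Each summand is nonnegative (here I use $a_F\ge1$ and $W_0(F)\ge1$), which already yields $\varphi_i(G)\ge\varphi_i(G')$ for all $i$.

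It then remains to determine when every summand vanishes. The cases $i\in\{0,1\}$ are immediate ($\varphi_0=1$, $\varphi_1=2|E|$, and $\sigma$ preserves $|E|$). For $i=n$ the bound $e(F)\le|Y|=n-1-p$ forces $t=p+1$, where $\binom{p}{t}=0$, so equality holds. For $i=n-1$ the only possibly-nonzero term has $t=p$ with $F$ a spanning subgraph of the core all of whose components are odd-unicyclic; such an $F$ exists exactly when the core is non-bipartite, and since every cycle of $G$ lies in the core (neither $v$ nor any $v_k$ lies on a cycle), this happens exactly when $G$ is non-bipartite. This gives strict inequality at $i=n-1$ in the non-bipartite case and equality in the bipartite case.

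For the remaining range $2\le i\le n-2$ I would exhibit one strictly positive summand. As $G$ is not a star, $u$ has a core-neighbour, so the core $G[Y]$ is connected with $|Y|\ge2$; consequently, for every $e\in\{1,\dots,M\}$, with $M=n-1-p$ if the core is non-bipartite and $M=n-2-p$ otherwise, there is a core TU-subgraph realizing $e$ edges in which $u$ is non-isolated. An elementary interval check shows that for $2\le i\le n-2$ one can pick $t\in\{1,\dots,p\}$ with $i-t\in\{1,\dots,M\}$, producing an $F$ with $u$ in a nontrivial component and hence a strictly positive term, forcing strict inequality. Assembling these, equality holds exactly on $\{0,1,n\}$ when $G$ is non-bipartite and on $\{0,1,n-1,n\}$ otherwise. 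The hard part will be this final equality/strictness bookkeeping: getting the weight collapse exactly right, in particular correctly treating the odd-unicyclic case for $C_u$, and verifying that the witnessing core forests exist at every intermediate $i$.
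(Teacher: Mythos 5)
Your proof is correct, but it follows a genuinely different route from the paper. The paper never proves this statement at all: it quotes it from \cite{mirzakhah2012}, and the Remark after Theorem~\ref{theorem3.2} notes it is the special case of Theorem~\ref{theorem3.2} in which the branch hanging at $v$ is a star. The method there (and in \cite{mirzakhah2012}) is to define a map $f$ from TU-subgraphs of the transformed graph into TU-subgraphs of the original and verify case by case (pendant edge present or absent; the distinguished vertex in a tree or in an odd-unicyclic component) that $W(f(H^\prime))\geq W(H^\prime)$, with the strictness discussion carried somewhat informally through the cases. You instead exploit the special structure of $\sigma$: since every edge of $G$ and of $G^\prime$ is a core edge, the edge $uv$, or a pendant edge, the triples $(F,\varepsilon,S)$ give an honest edge-count-preserving \emph{bijection} between the two families of TU-subgraphs, and hence a closed formula for $\varphi_i(G)-\varphi_i(G^\prime)$ as a sum of manifestly nonnegative terms. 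That formula buys you what the injection method leaves vague: the equality set $\{0,1,n\}$ versus $\{0,1,n-1,n\}$, the role of bipartiteness at $i=n-1$ (existence of a spanning core subgraph with all components odd unicyclic), and strictness for $2\leq i\leq n-2$ all drop out of one expression. Conversely, the paper's injection technique is what generalizes to the asymmetric transformations of Theorems~\ref{theorem3.2} and~\ref{theorem3.4}, where no clean bijection exists. Three points you should make explicit in a final write-up: (i) $p\geq 1$ is tacitly required (if $p=0$ then $G^\prime=G$ and equality holds for every $i$, so the ``only if'' direction needs at least one pendant to move); (ii) your two deferred existence claims --- a connected non-bipartite core admits a spanning connected odd-unicyclic subgraph (take a spanning tree plus one non-tree edge joining two vertices of the same colour class of the tree), and for each admissible $e\geq 1$ there is a core TU-subgraph with $e$ edges in which $u$ is not isolated (prune the maximal structure keeping one edge at $u$; a spanning subgraph of a TU-subgraph is again a TU-subgraph) --- are both true and elementary, but they are exactly where the ``iff'' lives, so they must be proved, not asserted; (iii) the Pascal identity is a red herring, since the $\varepsilon=1$ contributions of $G$ and $G^\prime$ cancel term by term without it.
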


Let $G=G_1|u : G_2|v$ be the graph obtained from two disjoint graphs
$G_1$ and $G_2$ by joining a vertex $u$ of $G_1$ and a vertex $v$ of
$G_2$ by an edge. For any graph $G$ and $v\in V(G)$, let $L_{G|v}(x)$
be the principal submatrix of $L_G(x)$ obtained by deleting the row
and column corresponding to the vertex $v$.

\begin{theorem}\label{theorem2.2}(\cite{guo2005})
If $G=G_1|u : G_2|v$, then
$L_G(x)=L_{G_1}(x)L_{G_2}(x)-L_{G_1}(x)L_{G_2|v}(x)-L_{G_2}(x)L_{G_1|u}(x)$.
\end{theorem}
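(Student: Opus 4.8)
The plan is to realize the combined Laplacian $L(G)$ as a rank-one perturbation of the block-diagonal matrix assembled from $L(G_1)$ and $L(G_2)$, and then read off the characteristic polynomial via the matrix determinant lemma. Order the vertices so that those of $G_1$ come first and those of $G_2$ second. The sole effect of inserting the edge $uv$ is to raise $d(u)$ and $d(v)$ each by one and to place a $-1$ in the $(u,v)$ and $(v,u)$ entries of the adjacency matrix; in the Laplacian $L=D-A$ this is exactly the addition of $(\mathbf{e}_u-\mathbf{e}_v)(\mathbf{e}_u-\mathbf{e}_v)^{T}$, where $\mathbf{e}_u,\mathbf{e}_v$ are the standard basis vectors indexed by $u$ and $v$ in $\mathbb{R}^{n}$. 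Hence, writing $A=(xI-L(G_1))\oplus(xI-L(G_2))$ and $w=\mathbf{e}_u-\mathbf{e}_v$, one has
\[
 xI-L(G)=A-ww^{T}.
\]

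First I would invoke the matrix determinant lemma in its adjugate form, $\det(A-ww^{T})=\det(A)-w^{T}\,\mathrm{adj}(A)\,w$, which is a polynomial identity in the entries of $A$ and therefore requires no invertibility hypothesis on $A$. Since $A$ is block diagonal, $\det(A)=\det(xI-L(G_1))\det(xI-L(G_2))=L_{G_1}(x)L_{G_2}(x)$, which yields the leading product term. It then remains to evaluate the quadratic form $w^{T}\mathrm{adj}(A)w$, which expands as $\mathrm{adj}(A)_{uu}-\mathrm{adj}(A)_{uv}-\mathrm{adj}(A)_{vu}+\mathrm{adj}(A)_{vv}$.

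The evaluation of these four adjugate entries is the crux. The diagonal ones follow at once from the block structure: deleting row and column $u$ (a $G_1$-index) leaves $(xI-L(G_1))$ with row and column $u$ removed in the first block and the full block $xI-L(G_2)$ in the second, so $\mathrm{adj}(A)_{uu}=L_{G_1|u}(x)L_{G_2}(x)$, and symmetrically $\mathrm{adj}(A)_{vv}=L_{G_1}(x)L_{G_2|v}(x)$. The step I expect to be the main obstacle is verifying that the two cross-block entries vanish: $\mathrm{adj}(A)_{uv}$ is, up to sign, the minor of $A$ got by deleting a row from the $G_2$-block and a column from the $G_1$-block, which leaves a block-diagonal matrix whose first block is $n_1\times(n_1-1)$ and whose second is $(n_2-1)\times n_2$. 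A permutation-expansion argument shows such a determinant is zero: the first $n_1$ rows are supported on only $n_1-1$ columns and so cannot be matched injectively to columns. Thus $\mathrm{adj}(A)_{uv}=\mathrm{adj}(A)_{vu}=0$.

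Assembling the pieces gives $w^{T}\mathrm{adj}(A)w=L_{G_1|u}(x)L_{G_2}(x)+L_{G_1}(x)L_{G_2|v}(x)$, whence
\[
 L_G(x)=L_{G_1}(x)L_{G_2}(x)-L_{G_2}(x)L_{G_1|u}(x)-L_{G_1}(x)L_{G_2|v}(x),
\]
which is exactly the claimed identity (after commuting the last two factors). As an independent check one could instead expand $\det(xI-L(G))$ twice by cofactors along the column indexed by $u$, isolating the single cross-block entry; this reproduces the same three terms and fixes the signs, but the rank-one route keeps the bookkeeping minimal.
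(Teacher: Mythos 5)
Your proof is correct, and it is worth noting that the paper itself offers no argument for this statement at all: Theorem~2.2 is quoted from Guo's paper \cite{guo2005}, so there is no internal proof to match against. Your route is the clean one. The key observation that joining $u$ to $v$ perturbs $L(G_1)\oplus L(G_2)$ by exactly the rank-one matrix $(\mathbf{e}_u-\mathbf{e}_v)(\mathbf{e}_u-\mathbf{e}_v)^{T}$ is right, the adjugate form of the matrix determinant lemma legitimately avoids any invertibility issue (both sides are polynomial identities in the entries), and your vanishing argument for the cross-block adjugate entries is sound: after deleting a $G_2$-row and a $G_1$-column, the $n_1$ surviving $G_1$-rows are supported on only $n_1-1$ columns, so every term in the permutation expansion contains a zero factor. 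The diagonal entries $\mathrm{adj}(A)_{uu}=L_{G_1|u}(x)L_{G_2}(x)$ and $\mathrm{adj}(A)_{vv}=L_{G_1}(x)L_{G_2|v}(x)$ then assemble to the claimed identity with the signs coming out automatically. The traditional proof (and the style of argument in Guo's paper and in the paper's related Lemma~4.5/4.6 computations) proceeds by cofactor expansion of $\det(xI-L(G))$ along the row or column of $u$, splitting off the single entry coupling the two blocks; that route is more elementary but requires tracking signs and minors through two expansions, which is exactly the bookkeeping your rank-one formulation eliminates. One small bonus of your approach: it makes transparent why the same identity holds verbatim for the signless Laplacian (Lemma~4.5 of the paper), since adding the edge perturbs $Q(G_1)\oplus Q(G_2)$ by $(\mathbf{e}_u+\mathbf{e}_v)(\mathbf{e}_u+\mathbf{e}_v)^{T}$, and the cross terms still vanish by the same block argument.
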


\begin{theorem}\label{theorem2.3}(\cite{he2008})
If $G$ be a connected graph with $n$ vertices which consists of a subgraph $H (V(H)\geq 2)$
and $n-|V(H)|$ pendent vertices attached to a vertex $v$ in $H$, then
$L_G(x)=(x-1)^{(n-|V(H)|)}L_{H}(x)-(n-|V(H)|)x(x-1)^{(n-|V(H)|-1)}L_{H|v}(x)$.
\end{theorem}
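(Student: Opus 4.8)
The plan is to prove the identity by induction on the number $m = n - |V(H)|$ of pendent vertices, using Theorem~\ref{theorem2.2} as the single-pendent building block. First I would record an auxiliary factorization for the deleted-vertex polynomial. Let $G_m$ denote the graph obtained from $H$ by attaching $m$ pendent vertices at $v$, so that $G_0 = H$ and $G_m = G$. I claim that
$$L_{G_m|v}(x) = (x-1)^m\, L_{H|v}(x).$$
To see this, observe that deleting the row and column indexed by $v$ from $xI - L(G_m)$ severs the only edges incident to the $m$ pendent vertices. Each pendent vertex has degree $1$ in $G_m$, so after the deletion it contributes a decoupled $1\times 1$ block equal to $x-1$, while every remaining vertex of $H$ retains its $H$-degree (attaching pendents at $v$ alters no degree except that of $v$, which has been removed). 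Hence the surviving matrix is block diagonal with an $(x-1)^m$ factor multiplying precisely the submatrix whose determinant is $L_{H|v}(x)$, which gives the claim.

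Next I would set up the recurrence. Viewing $G_{m+1}$ as $G_m$ with one additional pendent vertex $w$ attached at $v$, I apply Theorem~\ref{theorem2.2} with $G_1 = G_m$, $u = v$, and $G_2 = K_1 = \{w\}$. Since $L_{K_1}(x) = x$ and $L_{K_1|w}(x) = 1$ (the empty determinant), the theorem collapses to
$$L_{G_{m+1}}(x) = (x-1)\,L_{G_m}(x) - x\,L_{G_m|v}(x).$$
Substituting the auxiliary factorization yields $L_{G_{m+1}}(x) = (x-1)L_{G_m}(x) - x(x-1)^m L_{H|v}(x)$.

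Finally I would run the induction. The base case $m=0$ is the trivial identity $L_H(x) = L_H(x)$. Assuming $L_{G_m}(x) = (x-1)^m L_H(x) - m\,x\,(x-1)^{m-1} L_{H|v}(x)$, the recurrence gives
$$L_{G_{m+1}}(x) = (x-1)^{m+1} L_H(x) - m\,x\,(x-1)^{m} L_{H|v}(x) - x(x-1)^m L_{H|v}(x),$$
and collecting the last two terms produces the coefficient $-(m+1)\,x\,(x-1)^m$, which is exactly the formula at level $m+1$. Setting $m = n - |V(H)|$ then finishes the proof.

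I expect the only genuine subtlety to be the auxiliary factorization: one must remember that deleting a row and column of the Laplacian is \emph{not} the same as deleting the vertex and recomputing the matrix, so the argument hinges on the fact that the pendent vertices are all attached at the single vertex $v$, leaving every other degree untouched. Everything else is a mechanical application of Theorem~\ref{theorem2.2} together with the bookkeeping of the powers of $(x-1)$.
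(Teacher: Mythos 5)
Your proof is correct. A point of comparison worth noting: the paper itself gives no proof of this statement --- Theorem~\ref{theorem2.3} is imported verbatim from \cite{he2008} --- so your argument is a self-contained derivation of a result the paper only cites. Both of your ingredients check out. The auxiliary factorization $L_{G_m|v}(x)=(x-1)^m L_{H|v}(x)$ holds because, once the row and column of $v$ are deleted from $xI-L(G_m)$, each pendent vertex (having degree $1$ and no neighbor other than $v$) contributes an isolated $1\times 1$ block $(x-1)$, and the block on $V(H)\setminus\{v\}$ is exactly the corresponding principal submatrix of $xI-L(H)$, since attaching pendents at $v$ changes no other degree; this is precisely the subtlety you flag, namely that $L_{G|v}(x)$ is a principal minor of $xI-L(G)$ and not the polynomial of the vertex-deleted graph. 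The single-pendent application of Theorem~\ref{theorem2.2} with $G_2=K_1$, $L_{K_1}(x)=x$, $L_{K_1|w}(x)=1$ correctly collapses to $L_{G_{m+1}}(x)=(x-1)L_{G_m}(x)-xL_{G_m|v}(x)$, and the induction bookkeeping is right. A bonus of your route: it transfers verbatim to the signless Laplacian $Q=D+A$ (a pendent vertex still yields a diagonal block $x-1$ after deleting $v$), which is exactly the argument the paper waves at but omits when it asserts that Lemma~\ref{lemma4.5} and Lemma~\ref{lemma4.6} can be proved ``similar to the proof of Theorem~\ref{theorem2.2} and Theorem~\ref{theorem2.3}.''
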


Throughout this paper, we use the following notations. Let $\mathcal{B}(n)$
denote all bicyclic graphs with $n$ vertices.
For every graph $G\in \mathcal{B}(n)$, the lengths of the two minimal cycles $C_1, C_2$ of $G$
is denoted by $g(C_1), g(C_2)$, written by $g_1, g_2$ for short. It is obvious that
$g_1=|V(C_1)|, g_2=|V(C_2)|$. Let
$$\mathcal{B}^1(n)=\{G| G\in \mathcal{B}(n), \mbox{ at least one of } g_1, g_2
\mbox{ is odd }\}, $$
$$\mathcal{B}^2(n)=\{G| G\in \mathcal{B}(n), \mbox{ both } g_1 \mbox{ and } g_2
\mbox{ are even }\}.$$
Let $B_n^1$ denote the graph obtained from $C_4$ by adding one edge between
two non-adjacent vertices and adding $n-4$ pendent vertices at the vertex of degree $3$,
and $B_n^2$ denote the graph obtained from $K_{2,3}$ by adding $n-5$ pendent vertices
at one vertex of degree $3$, where $C_4$ is a cycle with $4$ vertices and
$K_{2,3}$ is a complete bipartite graph with $2$ and $3$ vertices in
the two sets, respectively.

Using the notations in \cite{he2008} and \cite{he2010}, we divide
$\mathcal{B}(n)$ into three types.
Let $\overline{G}$ denote the base of $G$, which is the minimal bicyclic subgraph
of $G$. It is easy to see that $\overline{G}$ can be obtained from $G$ by deleting
pendent vertices consecutively. Let $B(p,q)$ be the bicyclic graph obtained from
two vertex-disjoint cycles $C_p$ and $C_q$ by identifying vertex $u$ of $C_p$
and vertex $v$ of $C_q$. Let $B(p,l,q)$ be the bicyclic graph obtained from
two vertex-disjoint cycles $C_p$ and $C_q$ by joining vertex $u$ of $C_p$
and vertex $v$ of $C_q$ by a path $uu_1u_2\cdots u_{l-1}v$ of length $l (l\geq 1)$.
Let $B(P_k,P_l,P_m)(m\leq l\leq k)$ be the bicyclic graph obtained from three
pairwise internal disjoint paths of lengths $k,l,m$ from vertices $x$ to $y$. (see fig.1).
Define $\mathcal{B}(n)=\mathcal{B}_1(n)
\cup \mathcal{B}_2(n)\cup \mathcal{B}_3(n)$, where
$$\mathcal{B}_1(n)=\{G| G\in\mathcal{B}(n), \overline{G}=B(p,q), p\geq3, q\geq3\},$$
$$\mathcal{B}_2(n)=\{G| G\in\mathcal{B}(n), \overline{G}=B(p,l,q), p,q\geq3, l\geq1\},$$
$$\mathcal{B}_3(n)=\{G| G\in\mathcal{B}(n), \overline{G}=B(P_k,P_l,P_m), 1\leq m\leq l\leq k\}.$$
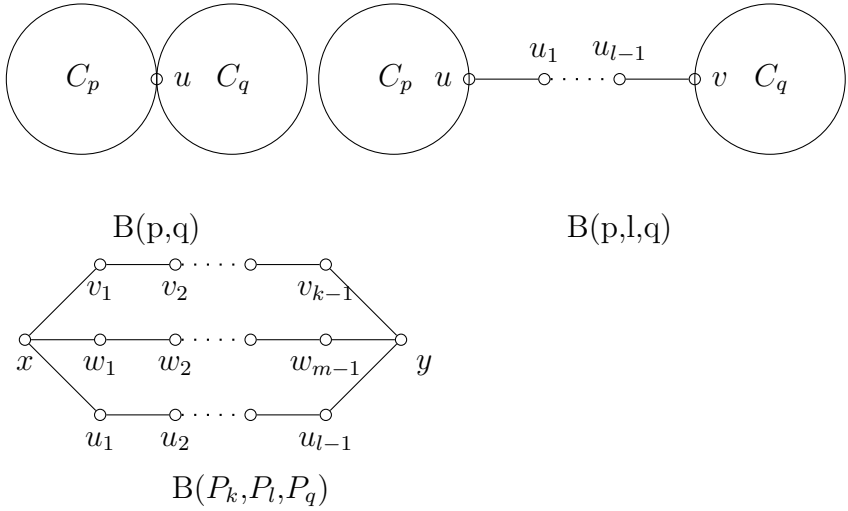
\begin{figure}

\begin{tikzpicture}
\draw  (-1,0) circle(1);
\draw  (1,0) circle(1);
\node(v900)[label=0:$C_p$]at(-1.5,0){};
\node(v901)[label=0:$C_q$]at(0.5,0){};
\node (v)[draw,shape=circle,inner sep=1.5pt,label=0:$u$] at (0,0){};
\node (t)at (0,-2){B(p,q)};
\end{tikzpicture}
\begin{tikzpicture}
\draw  (-1,0) circle(1);
\draw  (4,0) circle(1);
\node(v900)[label=0:$C_p$]at(-1.5,0){};
\node(v901)[label=0:$C_q$]at(3.5,0){};
\node (u)[draw,shape=circle,inner sep=1.5pt,label=180:$u$] at (0,0){};
\node (u1)[draw,shape=circle,inner sep=1.5pt,label=90:$u_1$] at (1,0){};
\node (u2)[draw,shape=circle,inner sep=1.5pt,label=90:$u_{l-1}$] at (2,0){};
\node (v)[draw,shape=circle,inner sep=1.5pt,label=0:$v$] at (3,0){};
\draw [loosely dotted,thick]
(u1) --(u2);
\draw (u)--(u1)(u2)--(v) ;
\node (t)at (2,-2){B(p,l,q)};
\end{tikzpicture}

\begin{tikzpicture}
\node (x)[draw,shape=circle,inner sep=1.5pt,label=-90:$x$] at (0,0){};
\node (u1)[draw,shape=circle,inner sep=1.5pt,label=-90:$u_1$] at (1,-1){};
\node (u2)[draw,shape=circle,inner sep=1.5pt,label=-90:$u_2$] at (2,-1){};
\node (u3)[draw,shape=circle,inner sep=1.5pt,label=-45:] at (3,-1){};
\node (u4)[draw,shape=circle,inner sep=1.5pt,label=-90:$u_{l-1}$] at (4,-1){};
\node (v1)[draw,shape=circle,inner sep=1.5pt,label=-90:$v_1$] at (1,1){};
\node (v2)[draw,shape=circle,inner sep=1.5pt,label=-90:$v_2$] at (2,1){};
\node (v3)[draw,shape=circle,inner sep=1.5pt,label=-45:] at (3,1){};
\node (v4)[draw,shape=circle,inner sep=1.5pt,label=-90:$v_{k-1}$] at (4,1){};
\node (w1)[draw,shape=circle,inner sep=1.5pt,label=-90:$w_1$] at (1,0){};
\node (w2)[draw,shape=circle,inner sep=1.5pt,label=-90:$w_2$] at (2,0){};
\node (w3)[draw,shape=circle,inner sep=1.5pt,label=-45:] at (3,0){};
\node (w4)[draw,shape=circle,inner sep=1.5pt,label=-90:$w_{m-1}$] at (4,0){};
\node (y)[draw,shape=circle,inner sep=1.5pt,label=-45:$y$] at (5,0){};
\draw [loosely dotted,thick]
(u2) --(u3) (v2) --(v3) (w2) --(w3);
\draw (x)--(u1)(u2)--(u1) (u3)--(u4) (u4)--(y)
(x)--(v1)(v2)--(v1) (v3)--(v4) (v4)--(y)
(x)--(w1)(w2)--(w1) (w3)--(w4) (w4)--(y);
\node (t)at (3,-2){B($P_k$,$P_l$,$P_q$)};
\end{tikzpicture}
\caption{Three types of bases of bicyclic graphs} \label{fig:pepper}
\end{figure}
\section{Transformations}
A pendent edge is an edge which is incident to a vertex of degree $1$.
Let $N_G(v)$ denote the neighbors of $v$ in the graph $G$. In this paper,
we only consider the cycles of minimal lengths in $G$ and denote the cycles
$C_1, C_2, \cdots$. Write $G_1\leq G$,
if $G_1$ is a subgraph of $G$. If two cycles $C_1, C_2$ of $G$ has the
form $B(P_k, P_l, P_m)$, then assume $|V(C_1)\cap V(C_2)|=\mbox{min}\{k,l,m\}$.

\begin{definition}\label{definition3.1}
Let $G$ be a simple connected graph with $n$ vertices, and
let $uv$ be a nonpendent edge which is not contained in the cycle
of $G$, let $G_{uv}$ obtained from $G$ by identifying vertices
$u$ and $v$ and add a new pendent edge $ww^\prime$ to the new
vertex $w$. (see fig.2).
\end{definition}

\begin{theorem}\label{theorem3.2}
Let $G$ be a n-vertex connected graph, let $G$ and $G_{uv}$ be
the two graphs presented in definition 2.1.
Then $$\varphi_i(G)\geq\varphi_i (G_{uv}),
i=0,1,\cdots,n,$$
with equality if and only if either $i\in\{0,1,n\}$ when $G$ is
non-bipartite, or $i\in\{0,1,n-1,n\}$ otherwise.
\end{theorem}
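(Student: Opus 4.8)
The plan is to argue directly from the TU-subgraph expansion in Theorem~\ref{theorem1.2}, comparing $G$ and $G_{uv}$ term by term. Since $uv$ lies on no cycle it is a bridge; deleting it splits $G$ into $G_1\ni u$ and $G_2\ni v$, and I write $p=|V(G_1)|$, $q=|V(G_2)|$, so $p+q=n$. Because $u,v$ have no common neighbour, contracting $uv$ creates no multiple edge, and attaching the pendent $ww'$ restores both the vertex and the edge count; thus $|V(G_{uv})|=n$ and $|E(G_{uv})|=|E(G)|$, and there is a canonical bijection $\beta\colon E(G)\to E(G_{uv})$ fixing $E(G_1)\cup E(G_2)$ and sending $uv$ to $ww'$. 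First I would use $\beta$ to pair each edge set with one of the same size in the other graph and compare the two weights assigned by Theorem~\ref{theorem1.2}.

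The comparison splits on the distinguished edge. For a TU-subgraph $H$ of $G$, let $C_u,C_v$ be the pieces of $H$ inside $G_1,G_2$ containing $u,v$, with $a=|V(C_u)|$, $b=|V(C_v)|$; since $\beta$ fixes the two sides these pieces are identical in $\beta(H)$. If $uv\in H$, the component carrying it in $G$ ($C_u$ and $C_v$ joined by a bridge, on $a+b$ vertices) and the matching component in $G_{uv}$ ($C_u,C_v$ glued at $w$ plus the leaf $w'$, again $a+b$ vertices and the same cyclomatic number) have equal weight, so these terms cancel. If $uv\notin H$, then in $G_{uv}$ the pieces $C_u,C_v$ merge at $w$ and $w'$ becomes isolated; comparing only the factors from $C_u,C_v$ (all other components agree) gives three cases: two trees, $ab$ versus $(a+b-1)$, with $ab-(a+b-1)=(a-1)(b-1)\ge 0$; one odd-unicyclic piece and one tree, $4\cdot(\text{tree order})$ versus $4$, again $\ge 0$; and two odd-unicyclic pieces, weight $16$ in $G$ while $\beta(H)$ fails to be a TU-subgraph of $G_{uv}$ (its merged component is bicyclic), a pure surplus for $G$. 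Summing over all TU-subgraphs with $i$ edges yields $\varphi_i(G)\ge\varphi_i(G_{uv})$.

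For the equality statement I would determine, for each $i$, when every difference above is forced to vanish: no $i$-edge TU-subgraph may have both $C_u$ and $C_v$ odd-unicyclic, and in the remaining $uv\notin H$ terms one of $C_u,C_v$ must be a single vertex. The cases $i\in\{0,1\}$ are immediate (only the empty forest, resp.\ single edges, arise, and $|E(G)|=|E(G_{uv})|$), while for any intermediate $i$ one can select one edge of $G_1$ at $u$ and one edge of $G_2$ at $v$ (possible since $u,v$ are nonpendent) to build a TU-subgraph with $a,b\ge 2$, whence $(a-1)(b-1)\ge 1$ and the inequality is strict. The two boundary values $i=n-1$ and $i=n$ are where the argument must be done carefully: at $i=n$ the TU-subgraphs are exactly the spanning subgraphs all of whose components are odd unicyclic, which exist only when $G$ has odd cycles, and at $i=n-1$ they are the near-spanning structures that in the bipartite case are counted by spanning trees (where one checks that bridge-contraction and adding a pendent leave the spanning-tree count unchanged). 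This is exactly where the dichotomy in the statement — the set $\{0,1,n\}$ versus $\{0,1,n-1,n\}$ — is meant to come from, through the weight $4^{c}$ of Theorem~\ref{theorem1.2}.

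The main obstacle I anticipate is controlling the surplus ``both pieces odd-unicyclic'' terms at the top index: if $G$ carries an odd cycle on each side of $uv$, then a spanning all-unicyclic subgraph avoiding $uv$ survives in $G$ with no image in $G_{uv}$, so equality at $i=n$ genuinely needs the odd cycles of $G$ not to straddle the bridge $uv$ — which is the situation in the intended application, where $uv$ sits in a pendent tree hanging off the bicyclic base. Pinning down the boundary indices $n-1,n$ and matching them to the bipartite/non-bipartite split is therefore the delicate part; the inequality itself falls out routinely from the weight comparison above.
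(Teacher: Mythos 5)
Your argument for the inequality is, up to reversing the direction of the map, exactly the paper's proof. The paper defines a map $f$ from TU-subgraphs of $G_{uv}$ into TU-subgraphs of $G$ which is precisely your $\beta^{-1}$, and checks the same cases: equal weights when the distinguished edge ($ww^\prime$, resp. $uv$) is present; $4\cdot(a+1)$ versus $4\cdot 1$ for an odd-unicyclic piece plus a tree; and $(b+1)(c+1)$ versus $b+c+1$ for two trees. Your fourth case (two odd-unicyclic pieces, a pure surplus for $G$) appears in the paper as the possibility that the image $\mathcal{H}_i^*=f(\mathcal{H}_i^\prime)$ is a proper subset of $\mathcal{H}_i$. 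One line you should add: for your term-by-term summation to be legitimate you must check that whenever $\beta(S)$ is a TU-subgraph of $G_{uv}$, the set $S$ is a TU-subgraph of $G$; otherwise a term $W_{G_{uv}}(\beta(S))>0=W_G(S)$ would enter with the wrong sign. This holds because $w$ is a cut vertex of $G_{uv}$, so the cycle of any unicyclic component of $\beta(S)$ lies entirely on one side of $w$ and survives the splitting of $w$ into $u$ and $v$; it is easy, but it is not automatic.

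The obstacle you flag at $i=n$ is not a defect of your proof; it is an error in the statement itself, and your own analysis supplies the counterexample. Take $G$ to be two triangles joined by the bridge $uv$, so $n=6$ and $G$ is non-bipartite. Deleting $uv$ leaves a spanning TU-subgraph with two odd unicyclic components, of weight $16$, while deleting any one triangle edge leaves an odd unicyclic spanning subgraph of weight $4$; hence $\varphi_6(G)=6\cdot 4+16=40$. In $G_{uv}$ every TU-subgraph with $6$ edges must contain $ww^\prime$, so $\varphi_6(G_{uv})=6\cdot 4=24<40$, contradicting the claimed equality at $i=n$. The paper's justification of that step (``since this transformation does not change the length of the cycles, $\varphi_n(G)=\varphi_n(G_{uv})$'') tacitly assumes that no $n$-edge TU-subgraph of $G$ avoids $uv$, i.e.\ that odd cycles do not sit on both sides of the bridge; the correct characterization is that equality holds at $i=n$ if and only if at least one component of $G-uv$ is bipartite. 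This covers the intended situation of a bridge inside a pendent tree, but not, e.g., graphs with base $B(p,l,q)$ with $p,q$ both odd, to which the paper also applies the transformation. So your inequality proof is complete modulo the remark above, your handling of $i\in\{0,1\}$, of strictness for intermediate $i$, and of $i=n-1$ in the bipartite case (spanning-tree count preserved under contracting a bridge and adding a pendent edge) is sound, and the part you could not pin down cannot be pinned down because it is false as stated. The results of Section 5 are unaffected, since they use only the inequality together with strictness for $2\le i\le n-1$.
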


\begin{figure}
\begin{tikzpicture}
\node(v900)[label=0:$G_1$]at(-1,0){};
\node(v901)[label=0:$G_2$]at(2,0){};
\node (v0)[draw,shape=circle,inner sep=1.5pt,label=-45:$u$] at (0:0){};
\node[draw,shape=circle,inner sep=1.5pt,label=-135:$v$](v1) at (1,0){};
\draw (v0)--(v1);
\draw  (2,0) circle(1);
\draw (-1,0) circle(1);
\node(v902)[label=0:$G_1$]at(6,0){};
\node(v903)[label=0:$G_2$]at(8,0){};
\node (v0)[draw,shape=circle,inner sep=1.5pt,label=-45:$w$] at (7,0){};
\node[draw,shape=circle,inner sep=1.5pt,label=45:$w'$](v1) at (7,1){};
\draw (v0)--(v1);
\draw[->] (3.5,0)--(4.5,0);
\draw  (6,0) circle(1);
\draw (8,0) circle(1);
\end{tikzpicture}
\caption{ Transformation in Definition 3.1} \label{fig:pepper}
\end{figure}
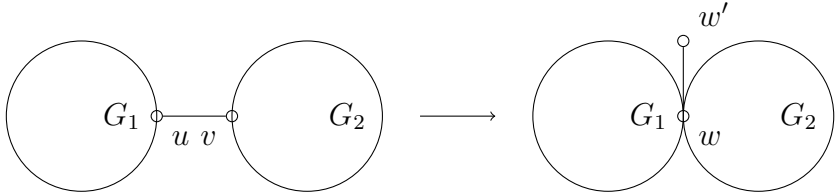

\begin{proof}
From  Theorem~\ref{theorem1.2}, according to the previous section, we have
$$\varphi_0(G)=\varphi_0(G_{uv}),
\varphi_1(G)=\varphi_1(G_{uv}).$$
Since this transformation
does not change the length of the cycles,
thus, $\varphi_n(G)=\varphi_n(G_{uv})$, and when $G$ is bipartite,
$\varphi_{n-1}(G)=\varphi_{n-1}(G_{uv})$.

When $G$ is non-bipartite,
for $2\leq i\leq n-1$, denote $\mathcal{H}_i^\prime$ and $\mathcal{H}_i$
the sets of all TU-subgraphs of $G_{uv}$ and $G$ with exactly $i$ edges,
respectively. For an arbitrary TU-subgraph $H^\prime\in\mathcal{H}_i^\prime$,
let $R^\prime$ be the component of $H^\prime$ containing $w$. Let
$N_{R^\prime}(w)\cap N_G(u)=\{u_{i_1},u_{i_2},\cdots, u_{i_r}\}$, where
$0\leq r\leq \min \{d_G(u)-1, |V(R^\prime)|-1\}$,
$N_{R^\prime}(w)\cap N_G(v)=\{v_{i_1},v_{i_2},\cdots, v_{i_s}\}$, where
$0\leq s\leq \min \{d_G(v)-1, |V(R^\prime)|-1\}$.
Define $H$ with $V(H)=V(H^\prime)-\{w,w^\prime\}+\{u,v\}$, if $ww^\prime\not\in E(H^\prime)$, $E(H)=E(H^\prime)-wu_{i_1}-\cdots-wu_{i_r}-wv_{i_1}-\cdots-wv_{i_s}+uu_{i_1}+\cdots+uu_{i_r}+vv_{i_1}+\cdots+vv_{i_s}$.
If $ww^\prime\in E(H^\prime)$, let
$E(H)=E(H^\prime)-wu_{i_1}-\cdots-wu_{i_r}-wv_{i_1}-\cdots-wv_{i_s}+uu_{i_1}+
\cdots+uu_{i_r}+vv_{i_1}+\cdots+vv_{i_s}+uv-ww^\prime$.
Let $f: \mathcal{H}_i^\prime\rightarrow \mathcal{H}_i$, and
$\mathcal{H}_i^*=f(\mathcal{H}_i^\prime)=\{f(H^\prime)|H^\prime\in \mathcal{H}_i^\prime\}$.

Now we distinguish $\mathcal{H}_i^\prime$ into the following three cases.
Denote $G_1$ the connected component containing $u$ after deleting $uv$
from $G$, and let $G_2$ be the connected component
containing $v$ after deleting $uv$ from $G$.

Case 1: $ww^\prime \in H^\prime$, then $H$ and $H^\prime$
have all the components of equal size, thus $W(H)=W(H^\prime)$.

Case 2: $ww^\prime\not\in H^\prime$, $w$ is in an odd unicyclic component
$U^\prime$ of $H^\prime$,
By the symmetry of $G_1$ and $G_2$, without loss of
generality, assume the odd cycle of $U^\prime$ is a subgraph of $G_1$.
Assume $U^\prime$ contains $a$ vertices in $G_2 \backslash \{w\}$
$(a\geq 0)$, then $W(H^\prime)=4\cdot1\cdot N$, for some constant
value $N$, $W(H)=4\cdot(a+1)\cdot N$. Thus $W(H)\geq W(H^\prime)$.

Case 3: $ww^\prime\not\in H^\prime$, $w$ is in a tree $T^\prime$ of $H^\prime$.
Assume $T^\prime$ contains $b$ vertices in $G_1\backslash\{w\}$ and
$c$ vertices in $G_2\backslash\{w\}$, then $W(H^\prime)=(b+c+1)\cdot1\cdot N$,
for some constant value $N$, $W(H)=(b+1)\cdot(c+1)\cdot N$. Thus $W(H)\geq W(H^\prime)$.

Therefore, by above discussions, $\varphi_i(G)>\varphi_i (G_{uv}),
i=2,\cdots,n-1$ holds.

When $G$ is bipartite, it is easy to prove $\varphi_i(G)>\varphi_i (G_{uv}),
i=2,\cdots,n-2$ by using above discussions of Case 1 and Case 3.
\end{proof}

\begin{remark}
When the subgraph induced by $V(G_2)$ is a star, it is easy to see
that the result of Theorem~\ref{theorem2.1} is a special case of
Theorem~\ref{theorem3.2}.

Using the transformation of Definition~\ref{definition3.1} consecutively, every
graph in $\mathcal{B}_2(n)$ can be transformed into a graph which
belongs to $\mathcal{B}_1(n)$, and keep all the signless Laplacian
coefficients not increased. Thus the graph which has minimum signless
Laplacian coefficients must belong to $\mathcal{B}_1(n)$ or $\mathcal{B}_3(n)$.
\end{remark}

\begin{definition}\label{definition3.3}
Let $G=(V,E)$ be a connected graph with at least one cycle $C_1$ $(V(C_1)\geq 5)$.
Let $u,v,w\in V(C_1)$ and $u\sim v, v\sim w$.
Assume $N_G(u)=\{v,u_1,u_2,\cdots\}$, $N_G(v)=\{u,w,v_1,v_2,\cdots\},
N_G(w)=\{v,w_1,w_2,\cdots\}$, and $N_G(u)\cap N_G(v)=\emptyset,
N_G(v)\cap N_G(w)=\emptyset, N_G(u)\cap N_G(w)=\emptyset$, then the graph
$$G^\prime=G-\{vw, ww_1,ww_2, \cdots, vv_1, vv_2, \cdots\}+\{uw, uw_1,uw_2, \cdots, uv_1, uv_2, \cdots\}.$$
\end{definition}

\begin{theorem}\label{theorem3.4}
Let $G=(V,E)$ be a connected graph with at least one cycle $C_1$ ($V(C_1)\geq 5$).
Let $u,v,w\in V(C_1)$ and $u\sim v, v\sim w$ as defined in Definition~\ref{definition3.3}.
If the following statements hold:

(1). If $\exists C_2, C_2\leq G$, such that $|V(C_1)\cap V(C_2)|\leq 2$,
then $u, v, w$ satisfy $|\{u,v,w\}\cap V(C_2)|=0,\mbox{or } 1$.

(2). If $\exists C_3, C_3\leq G$, such that $|V(C_1)\cap V(C_3)|= 3$,
then $u, v, w$ satisfy $|\{u,v,w\}\cap V(C_2)|=0,\mbox{or } 3$.

(3). If $\exists C_4, C_4\leq G$, such that $|V(C_1)\cap V(C_4)|\geq 4$,
then $u, v, w$ satisfy $|\{u,v,w\}\cap V(C_2)|=3$.

Then by performing the transformation of Definition~\ref{definition3.3}
to $u, v, w$, $\varphi_i(G)\geq\varphi_i (G^\prime)$,
$i=0,1,\cdots,n$, with equality if and only if $i\in\{0,1\}$.
\end{theorem}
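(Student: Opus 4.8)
The plan is to evaluate both coefficient sequences with the TU-subgraph formula of Theorem~\ref{theorem1.2} and to compare them term by term through an edge-preserving, weight-increasing injection of TU-subgraphs, following the scheme of the proof of Theorem~\ref{theorem3.2}. Since Definition~\ref{definition3.3} deletes $1+|\{v_j\}|+|\{w_k\}|$ edges and adds exactly as many, $|E(G)|=|E(G')|$; hence $\varphi_0(G)=\varphi_0(G')=1$ and $\varphi_1(G)=\varphi_1(G')=2|E(G)|$, which gives equality at $i\in\{0,1\}$. The decisive geometric fact is that the transformation pulls $v$ and $w$ off $C_1$ and reattaches them as pendant vertices at $u$: writing $C_1=uvwx_1\cdots x_t u$, the edges $vw$ and $wx_1$ vanish while $uw$ and $ux_1$ appear, so $C_1$ is replaced by the cycle $ux_1\cdots x_t u$ of length $g_1-2$. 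As $g_1\geq 5$ this is a genuine cycle on at least three vertices, and its length has the same parity as $g_1$; consequently odd cycles are sent to odd cycles.

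Concretely I would use the edge bijection $uv\leftrightarrow uv$, $uu_j\leftrightarrow uu_j$, $uw\leftrightarrow vw$, $uv_j\leftrightarrow vv_j$, $uw_k\leftrightarrow ww_k$ (all edges disjoint from $\{u,v,w\}$ being fixed), and define $f\colon\mathcal{H}'_i\to\mathcal{H}_i$ on the TU-subgraphs of $G'$ by transplanting the edges at $u$ back onto $v$ and $w$. This $f$ preserves the number of edges and is injective. The weight comparison splits according to the component $R'$ of $H'$ meeting $u$. If $R'$ is a tree, then depending on which of $uv,uw$ lie in $H'$ the image either keeps $R'$'s vertex set (equal weight) or splits it into pieces of sizes $b+1$ and $c+1$ replacing a tree of size $b+c+1$, and $(b+1)(c+1)\geq b+c+1$ gives $W(f(H'))\geq W(H')$, exactly as in Case~3 of Theorem~\ref{theorem3.2}. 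The essential case is when $R'$ is the odd unicyclic component carrying the shortened cycle. If both $uv,uw\in H'$, the path $wx_1\cdots x_t u$ produced by $f$ is closed through $uv,vw$ into the full cycle $C_1$, so the image stays odd unicyclic with the same vertex set and the weight is unchanged. If at least one of $uv,uw$ is absent, the image opens the cycle into a tree, losing the factor $4$ but absorbing $w$ (and possibly $v$); since the resulting tree has size $s+1\geq(g_1-2)+1\geq 4$, we get $W(f(H'))=(s+1)\cdot(\cdots)\geq 4\cdot(\cdots)=W(H')$. This is precisely where $g_1\geq 5$ is used. Thus $W(f(H'))\geq W(H')$ in every case, and summing yields $\varphi_i(G)=\sum_H W(H)\geq\sum_{H'}W(f(H'))\geq\sum_{H'}W(H')=\varphi_i(G')$.

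It remains to guarantee that $f(H')$ really is a TU-subgraph of $G$, that is, that transplanting never produces an even unicyclic component and never merges the two cycles of $G$ illegitimately; this is exactly the purpose of hypotheses (1)--(3). They bound $|\{u,v,w\}\cap V(C_2)|$ in each overlap regime so that the second cycle is either left entirely untouched (when at most one of $u,v,w$ lies on it) or is shortened by the same parity-preserving mechanism as $C_1$ (when all three lie on it), never being opened into an even configuration. I would check, separately for the three regimes $|V(C_1)\cap V(C_2)|\leq 2$, $=3$, $\geq 4$, how $C_2$ meets the collapsed path $uvw$, verifying in each that $f$ sends TU-subgraphs to TU-subgraphs and that the exponent $c$ of the factor $4^c$ agrees on the two sides. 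For the strict inequality when $2\leq i\leq n$ I would, for each such $i$, point to one TU-subgraph of $G'$ whose image has strictly larger weight: for instance a tree-splitting instance with $c\geq 1$, or at $i=n$ the subgraph supported on the shortened cycle, whose weight differs because $C_1$ and its image have different lengths.

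The step I expect to be the main obstacle is precisely this verification that $f$ respects the TU-property in the presence of the second cycle, together with the accompanying parity bookkeeping: one must show that in the regime where $C_2$ meets $\{u,v,w\}$ nontrivially the transplantation does not convert an odd unicyclic component into an even one, for otherwise the weight inequality, and even the membership $f(H')\in\mathcal{H}_i$, could fail. Establishing this case by case, and simultaneously confirming injectivity and the weight inequality there, is the technical core of the argument.
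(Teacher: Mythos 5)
Your proposal follows essentially the same route as the paper's own proof: the same edge-transplanting injection $f$ from TU-subgraphs of $G'$ into those of $G$, the same case analysis according to whether the component containing $u$ is a tree, an odd unicyclic component carrying the shortened cycle, or one carrying $C_2$, the same inequality $(b+1)(c+1)\geq b+c+1$ in the tree case, and the same use of $g(C_1)\geq 5$ to offset the lost factor $4$ when the cycle is opened into a tree. The paper likewise treats only one overlap regime with $C_2$ explicitly and declares the remaining cases ``similar,'' so your deferral of that bookkeeping to a case check (and your somewhat loose justification of strictness for $i\geq 2$, which the paper also merely asserts rather than verifies for every $i$) matches the published argument in both substance and level of detail.
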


\begin{proof}
$\varphi_0(G)=\varphi_0(G^\prime)$,
and since this transformation does not change the number of edges of $G$,
so $\varphi_1(G)=\varphi_1(G^\prime)$.
Next suppose $2\leq i\leq n$, denote $\mathcal{H}_i^\prime$ and $\mathcal{H}_i$
the sets of all TU-subgraphs of $G^\prime$ and $G$ with exactly $i$ edges,
respectively.

First assume $C_2$ exists, $C_3$ and $C_4$ do not exist,
and $|\{u,v,w\}\cap V(C_2)|=1$, without loss
of generality, assume $u\in V(C_1)\cap V(C_2)$ and
there is no other cycle which contains $v$ or $w$ except $C_1$.
Then all of $u,v,w$ belong to exactly one cycle $C_1$. In the discussion
below, if there is no odd cycle which satisfies some case, then we think
this case does not exist.

For an arbitrary TU-subgraph $H^\prime\in\mathcal{H}_i^\prime$,
let $R^\prime$ be the component of $H^\prime$ containing $u$. Let
$N_{R^\prime}(u)\cap N_G(w)=\{w_{i_1},w_{i_2},\cdots, w_{i_r}\}$, where
$0\leq r\leq \min \{d_G(w)-1, |V(R^\prime)|-1\}$,
$N_{R^\prime}(u)\cap N_G(v)=\{v_{i_1},v_{i_2},\cdots, v_{i_s}\}$, where
$0\leq s\leq \min \{d_G(v)-2, |V(R^\prime)|-1\}$.
Define $H$ with $V(H)=V(H^\prime)$, if $uw\not\in E(H^\prime)$,
$E(H)=E(H^\prime)-uw_{i_1}-\cdots-uw_{i_r}-uv_{i_1}-\cdots-uv_{i_s}+ww_{i_1}+\cdots+ww_{i_r}+vv_{i_1}+\cdots+vv_{i_s}$.
If $uw\in E(H^\prime)$,
$E(H)=E(H^\prime)-uw_{i_1}-\cdots-uw_{i_r}-uv_{i_1}-\cdots-uv_{i_s}+ww_{i_1}+\cdots+ww_{i_r}+vv_{i_1}+\cdots+vv_{i_s}-uw+vw$.
Let $f: \mathcal{H}_i^\prime\rightarrow \mathcal{H}_i$, and
$\mathcal{H}_i^*=f(\mathcal{H}_i^\prime)=\{f(H^\prime)|H^\prime\in \mathcal{H}_i^\prime\}$.

For convenience, write $\mathcal{H}_i^\prime$ as
$\mathcal{H}^\prime$, and $\mathcal{H}_i$ as $\mathcal{H}$.
If we include $u,v,w$ in a component of $H^\prime$,
then we have components of equal sizes in both TU-subgraphs $H^\prime$
and $H$, and thus $W(H)=W(H^\prime)$ in these cases.
Denote $\mathcal{H}^{\prime(0)}=\{H^\prime|uv\in H^\prime, vw\in H^\prime\}$.
Now we can assume that $u,v,w$ belong to $2$ or $3$ components.

We distinguish $\mathcal{H}^\prime$ into the following three cases.

Case 1: $u$ is not in an odd unicyclic
component of $H^\prime$. $H=f(H^\prime)$.
Assume $u\in T_1^\prime$, and there are $a_1+1$ vertices in
the component containing $u$ in $H-uv$ and $a_2+1$ vertices
in the component containing $w$ in $H-vw$, $a_3+1$ vertices
in the component containing $v$ in $H-uv-vw$ $(a_1, a_2, a_3\geq0)$.
Denote $N$ be the product of all the orders of components of $H^\prime$ except the components containing
$u, v, w$.

Subcase 1.1: $uv\in H^\prime, uw\not\in H^\prime$,
then $W(H^\prime)=(a_1+a_2+a_3+2)\cdot1\cdot N$, for some constant value $N$.
$W(H)=(a_1+a_3+2)\cdot(a_2+1)\cdot N$, so $W(H)-W(H^\prime)=[a_2\cdot(a_1+a_3+1)]\cdot N\geq 0$.
Denote $\mathcal{H}^{\prime(11)}=\{H^\prime|u\in T_1^\prime, uv\in H^\prime, uw\not\in H^\prime\}$.
Then $\sum_{H^\prime\in\mathcal{H}^{\prime(11)}}[W(H)-W(H^\prime)]\geq 0$.

Subcase 1.2: $uv, uw\not\in H^\prime$,
$W(H^\prime)=(a_1+a_2+a_3+1)\cdot 1\cdot1\cdot N$, for some constant value $N$.
$W(H)=(a_1+1)\cdot(a_2+1)\cdot(a_3+1)\cdot N$,
so $W(H)-W(H^\prime)\geq 0$.
Denote $\mathcal{H}^{\prime(12)}=\{H^\prime|u\in T_1^\prime, uv, uw\not\in H^\prime\}$.
Then $\sum_{H^\prime\in\mathcal{H}^{\prime(12)}}[W(H)-W(H^\prime)]\geq 0$.

Subcase 1.3: $uv\not\in H^\prime, uw\in H^\prime$,
then $W(H^\prime)=(a_1+a_2+a_3+2)\cdot1\cdot N$, for some constant value $N$.
$W(H)=(a_2+a_3+2)\cdot(a_1+1)\cdot N$, so $W(H)-W(H^\prime)=[a_1\cdot(a_2+a_3+1)]\cdot N\geq 0$.
Denote $\mathcal{H}^{\prime(13)}=\{H^\prime|u\in T_1^\prime, uv\not\in H^\prime, uw\in H^\prime\}$.
Then $\sum_{H^\prime\in\mathcal{H}^{\prime(13)}}[W(H)-W(H^\prime)]\geq 0$.

Case 2: $u$ is in an odd unicyclic component $U^\prime$ of $H^\prime$,
and $C_1^{\prime}$ is a subgraph of $U^\prime$.

Subcase 2.1: $uv, uw\not\in H^\prime$, then
$W(H^\prime)=4\cdot1\cdot1\cdot N$, for some constant value $N$.
$W(H)\geq(g(C_1)-1)\cdot1\cdot N$, so
$W(H)-W(H^\prime)\geq(g(C_1)-5)\cdot N\geq0$ by $g(C_1)\geq 5$.
Denote $\mathcal{H}^{\prime(21)}=\{H^\prime|u\in U^\prime, uv, uw\not\in H^\prime\}$,
then $\sum_{H^\prime\in\mathcal{H}^{\prime(21)}}[W(H)-W(H^\prime)]\geq 0$.

Subcase 2.2: $uv\not\in H^\prime, uw\in H^\prime$ or
$uw\not\in H^\prime, uv\in H^\prime$, then
$W(H^\prime)=4\cdot1\cdot N$, for some constant value $N$.
$W(H)\geq g(C_1)\cdot N$, so
$W(H)-W(H^\prime)\geq(g(C_1)-4)\cdot N >0$ by $g(C_1)\geq 5$.
Denote $\mathcal{H}^{\prime(22)}=\{H^\prime|u\in U^\prime,
uv\not\in H^\prime, uw\in H^\prime \mbox{ or }
uw\not\in H^\prime, uv\in H^\prime \}$,
then $\sum_{H^\prime\in\mathcal{H}^{\prime(22)}}[W(H)-W(H^\prime)]> 0$.

Case 3: $u$ is in an odd unicyclic component $U^\prime$ of $H^\prime$,
and $C_1^{\prime}$ is not a subgraph of $U^\prime$. Without loss of
generality, assume the subgraph $C_2$ of $G$ is a subgraph of $U^\prime$.

Subcase 3.1: $uv, uw\not\in H^\prime$, then
$W(H^\prime)=4\cdot1\cdot1\cdot N$, for some constant value $N$. Assume
the order of the tree in $H$ containing $v$, $w$ is $b_1, b_2 (b_1,b_2\geq1)$,
respectively. $W(H)=4\cdot b_1\cdot b_2\cdot N$, so
$W(H)-W(H^\prime)\geq0$.
Denote $\mathcal{H}^{\prime(31)}=\{H^\prime|u\in U^\prime, uv, uw\not\in H^\prime\}$,
then $\sum_{H^\prime\in\mathcal{H}^{\prime(31)}}[W(H)-W(H^\prime)]\geq 0$.

Subcase 3.2: $uv\not\in H^\prime, uw\in H^\prime$ or
$uw\not\in H^\prime, uv\in H^\prime$, then
$W(H^\prime)=4\cdot1\cdot N$, for some constant value $N$. Assume
the order of the tree $T$ in $H$ containing $w$ is $c$.
Since $v\in T$, thus $c\geq 1$.
$W(H)=4\cdot c\cdot N$, so
$W(H)-W(H^\prime)\geq 0$.
Denote $\mathcal{H}^{\prime(32)}=\{H^\prime|u\in U^\prime,
uv\not\in H^\prime, uw\in H^\prime \mbox{ or }
uw\not\in H^\prime, uv\in H^\prime \}$,
then $\sum_{H^\prime\in\mathcal{H}^{\prime(32)}}[W(H)-W(H^\prime)]\geq 0$.

Thus by summing over all possible subsets of $\mathcal{H}_i^\prime$,
$(\mathcal{H}_i^\prime=\mathcal{H}^{\prime(0)}\cup\mathcal{H}^{\prime(11)}\cup\mathcal{H}^{\prime(12)}
\cup\mathcal{H}^{\prime(13)}\cup\mathcal{H}^{\prime(21)}
\cup\mathcal{H}^{\prime(22)}\cup\mathcal{H}^{\prime(31)}\cup\mathcal{H}^{\prime(32)}),$
from Theorem~\ref{theorem1.2} and $f$ is an injection on the whole.
Then $$\varphi_i(G^\prime)=\sum_{H^\prime\in\mathcal{H}_i^\prime} W(H^\prime)
<\sum_{H\in\mathcal{H}_i^*} W(H)\leq\sum_{H\in\mathcal{H}_i} W(H)=\varphi_i(G)$$ holds
for $i=2,3,\cdots,n-1,n$.

For other cases in which $u, v, w$ satisfy, the discussion
is similar, thus we omit it.
\end{proof}

\begin{remark}
When $C_2$ exists, and $|\{u,v,w\}\cap V(C_2)|=0,\mbox{or } 1$, then after
performing transformation in Definition~\ref{definition3.3},
$g(C_1^\prime)=g(C_1)-2, g(C_2^\prime)=g(C_2)$.

When $C_3$ exists, and $|\{u,v,w\}\cap V(C_3)|=0$, then after
performing transformation in Definition~\ref{definition3.3},
$g(C_1^\prime)=g(C_1)-2, g(C_3^\prime)=g(C_3)$.

When $C_3$ $(\mbox{resp. } C_4)$ exists, and $|\{u,v,w\}\cap V(C_3)|=3$
$(\mbox{resp. } |\{u,v,w\}\cap V(C_4)|=3)$, then after
performing transformation in Definition~\ref{definition3.3},
$g(C_1^\prime)=g(C_1)-2, g(C_3^\prime)=g(C_3)-2$
$(\mbox{resp. } g(C_4^\prime)=g(C_4)-2)$.
\end{remark}

\section{The ordering of graphs in seven special sets}
For convenience, we define $B(3,3)$ as $B_1$, and $V(B_1)=\{x,u,v,w,z\}$,
where $d(u)=d(v)=d(w)=d(z)=2, d(x)=3$. The graph $B_1(a,b,c,d,e)$ is
obtained from $B_1$ by adding $a, b, c, d, e$ pendent vertices at
vertices $x, u, v, w, z$, respectively.

We define $B(3,4)$ as $B_2$, and $V(B_2)=\{u_1,u_2,u_3,u_4,u_5,u_6\}$,
where $d(u_2)=d(u_3)=d(u_4)=d(u_5)=d(u_6)=2, d(u_1)=4$. The graph $B_2(a,b,c,d,e,f)$ is
obtained from $B_2$ by adding $a, b, c, d, e, f$ pendent vertices at
vertices $u_1,u_2,u_3,u_4,u_5,u_6$, respectively.

We define $B(P_2,P_2,P_1)$ as $B_3$, and $V(B_3)=\{u,v,w,z\}$,
where $d(w)=d(z)=2, d(u)=d(v)=3$. The graph $B_3(a,b,c,d)$ is
obtained from $B_3$ by adding $a, b, c, d$ pendent vertices at
vertices $u, v, w, z$, respectively.

We define $B(P_3,P_2,P_1)$ as $B_4$, and $V(B_4)=\{u,v,w,z,x\}$,
where $d(w)=d(z)=d(x)=2, d(u)=d(v)=3$. The graph $B_4(a,b,c,d,e)$ is
obtained from $B_4$ by adding $a, b, c, d, e$ pendent vertices at
vertices $u, v, w, z, x$, respectively.

We define $B(P_3,P_2,P_2)$ as $B_5$, and $V(B_5)=\{u,v,u_1,v_1,w_1,w_2\}$,
where $d(u)=d(v)=3, d(u_1)=d(v_1)=d(w_1)=d(w_2)=2$. The graph $B_5(a,b,c,d,e,f)$ is
obtained from $B_5$ by adding $a, b, c, d, e, f$ pendent vertices at
vertices $u, v, u_1, v_1, w_1, w_2$, respectively. (See fig.3).

\begin{figure}
\begin{tikzpicture}
\node (v)[draw,shape=circle,inner sep=1.5pt,label=-45:$v$] at (-1,0){};
\node (v1)[draw,shape=circle,inner sep=1pt,label=-45:] at (-2,0){};
\node (v2)[draw,shape=circle,inner sep=1pt,label=-45:] at (-150:1.8){};

\node (x)[draw,shape=circle,inner sep=1.5pt,label=-90:$x$] at (0,0){};
\node (x1)[draw,shape=circle,inner sep=1pt,label=-45:] at (-60:1){};
\node (x2)[draw,shape=circle,inner sep=1pt,label=-45:] at (-120:1){};

\node (z)[draw,shape=circle,inner sep=1.5pt,label=45:$z$] at (1,0){};
\node (z1)[draw,shape=circle,inner sep=1pt,label=-45:] at (2,0){};
\node (z2)[draw,shape=circle,inner sep=1pt,label=-45:] at (-30:1.8){};

\node (w)[draw,shape=circle,inner sep=1.5pt,label=-45:$w$] at (60:1){};
\node (w1)[draw,shape=circle,inner sep=1pt,label=-45:] at (50:2){};
\node (w2)[draw,shape=circle,inner sep=1pt,label=-45:] at (75:2){};

\node (u)[draw,shape=circle,inner sep=1.5pt,label=45:$u$] at (120:1){};
\node (u1)[draw,shape=circle,inner sep=1pt,label=-45:] at (105:2){};
\node (u2)[draw,shape=circle,inner sep=1pt,label=-45:] at (135:2){};

\draw [loosely dotted,thick](v1)--(v2) node[pos=.5,sloped,below] {$c$};
\draw [loosely dotted,thick](w1)--(w2) node[pos=.5,sloped,above] {$d$};
\draw [loosely dotted,thick](x1)--(x2) node[pos=.5,sloped,below] {$a$};
\draw [loosely dotted,thick](z1)--(z2) node[pos=.5,sloped,below] {$e$};
\draw [loosely dotted,thick](u1)--(u2) node[pos=.5,sloped,above] {$b$};

\draw (x)--(x1)(x2)--(x) (x)--(z) (x)--(w)(x)--(u) (x)--(v)(v)--(u) (w)--(z)
(v)--(v1)(v2)--(v);
\draw (w)--(w1)(w)--(w2);
\draw (z)--(z1)(z2)--(z)(u)--(u1)(u2)--(u);

\node (t) at (0,-2) {$B_1$(a,b,c,d,e)};
\end{tikzpicture}
\begin{tikzpicture}
\node (v)[draw,shape=circle,inner sep=1.5pt,label=-45:$u_1$] at (0,0){};
\node (v1)[draw,shape=circle,inner sep=1pt,label=-45:] at (-60:1){};
\node (v2)[draw,shape=circle,inner sep=1pt,label=-45:] at (-120:1){};

\node (x)[draw,shape=circle,inner sep=1.5pt,label=-90:$u_6$] at (1,0){};
\node (x1)[draw,shape=circle,inner sep=1pt,label=-45:] at (2,0){};
\node (x2)[draw,shape=circle,inner sep=1pt,label=-45:] at (-30:1.7){};

\node (z)[draw,shape=circle,inner sep=1.5pt,label=-45:$u_5$] at (1,1){};
\node (z1)[draw,shape=circle,inner sep=1pt,label=-45:] at (30:2.3){};
\node (z2)[draw,shape=circle,inner sep=1pt,label=-45:] at (55:2.3){};

\node (w)[draw,shape=circle,inner sep=1.5pt,label=-45:$u_4$] at (0,1){};
\node (w1)[draw,shape=circle,inner sep=1pt,label=-45:] at (70:2){};
\node (w2)[draw,shape=circle,inner sep=1pt,label=-45:] at (95:2){};

\node (u)[draw,shape=circle,inner sep=1.5pt,label=180:$u_2$] at (120:1){};
\node (u1)[draw,shape=circle,inner sep=1pt,label=-45:] at (105:2){};
\node (u2)[draw,shape=circle,inner sep=1pt,label=-45:] at (135:2){};

\node (y)[draw,shape=circle,inner sep=1.5pt,label=-45:$u_3$] at (-1,0){};
\node (y1)[draw,shape=circle,inner sep=1pt,label=-45:] at (-2,0){};
\node (y2)[draw,shape=circle,inner sep=1pt,label=-45:] at (-150:1.7){};

\draw [loosely dotted,thick](v1)--(v2) node[pos=.5,sloped,below] {$a$};
\draw [loosely dotted,thick](w1)--(w2) node[pos=.5,sloped,above] {$d$};
\draw [loosely dotted,thick](x1)--(x2) node[pos=.5,sloped,below] {$f$};
\draw [loosely dotted,thick](z1)--(z2) node[pos=.5,sloped,above] {$e$};
\draw [loosely dotted,thick](u1)--(u2) node[pos=.5,sloped,above] {$b$};
\draw [loosely dotted,thick](y1)--(y2) node[pos=.5,sloped,below] {$c$};

\draw  (x)--(z) (v)--(w) (x)--(v)(v)--(u) (w)--(z)(y)--(v) (y)--(u);
\draw (v)--(v1)(v2)--(v)(x)--(x1)(x2)--(x);
\draw (w)--(w1)(w)--(w2)(y)--(y1)(y)--(y2);
\draw (z)--(z1)(z2)--(z)(u)--(u1)(u2)--(u);

\node (t) at (0,-2) {$B_2$(a,b,c,d,e,f)};
\end{tikzpicture}
\begin{tikzpicture}
\node (v)[draw,shape=circle,inner sep=1.5pt,label=-45:$v$] at (0,0){};
\node (v1)[draw,shape=circle,inner sep=1pt,label=-45:] at (-60:1){};
\node (v2)[draw,shape=circle,inner sep=1pt,label=-45:] at (-120:1){};

\node (z)[draw,shape=circle,inner sep=1.5pt,label=45:$z$] at (30:1){};
\node (z1)[draw,shape=circle,inner sep=1pt,label=-45:] at (15:2){};
\node (z2)[draw,shape=circle,inner sep=1pt,label=-45:] at (50:2){};

\node (w)[draw,shape=circle,inner sep=1.5pt,label=-135:$w$] at (150:1){};
\node (w1)[draw,shape=circle,inner sep=1pt,label=-45:] at (130:2){};
\node (w2)[draw,shape=circle,inner sep=1pt,label=-45:] at (165:2){};

\node (u)[draw,shape=circle,inner sep=1.5pt,label=45:$u$] at (0,1){};
\node (u1)[draw,shape=circle,inner sep=1pt,label=-45:] at (75:2){};
\node (u2)[draw,shape=circle,inner sep=1pt,label=-45:] at (110:2){};

\draw [loosely dotted,thick](v1)--(v2) node[pos=.5,sloped,below] {$b$};
\draw [loosely dotted,thick](w1)--(w2) node[pos=.5,sloped,above] {$c$};
\draw [loosely dotted,thick](z1)--(z2) node[pos=.5,sloped,above] {$d$};
\draw [loosely dotted,thick](u1)--(u2) node[pos=.5,sloped,above] {$a$};

\draw (v)--(z) (v)--(w)(w)--(u) (v)--(u) (u)--(z)
(v)--(v1)(v2)--(v);
\draw (w)--(w1)(w)--(w2);
\draw (z)--(z1)(z2)--(z)(u)--(u1)(u2)--(u);

\node (t) at (0,-2) {$B_3$(a,b,c,d)};
\end{tikzpicture}

\begin{tikzpicture}
\node (v)[draw,shape=circle,inner sep=1.5pt,label=-45:$v$] at (0,0){};
\node (v1)[draw,shape=circle,inner sep=1pt,label=-45:] at (-60:1){};
\node (v2)[draw,shape=circle,inner sep=1pt,label=-45:] at (-120:1){};

\node (x)[draw,shape=circle,inner sep=1.5pt,label=-90:$x$] at (150:1){};
\node (x1)[draw,shape=circle,inner sep=1pt,label=-45:] at (135:2){};
\node (x2)[draw,shape=circle,inner sep=1pt,label=-45:] at (170:2){};

\node (z)[draw,shape=circle,inner sep=1.5pt,label=45:$z$] at (1,0){};
\node (z1)[draw,shape=circle,inner sep=1pt,label=-45:] at (2,0){};
\node (z2)[draw,shape=circle,inner sep=1pt,label=-45:] at (-30:1.7){};

\node (w)[draw,shape=circle,inner sep=1.5pt,label=-45:$w$] at (1,1){};
\node (w1)[draw,shape=circle,inner sep=1pt,label=-45:] at (30:2.2){};
\node (w2)[draw,shape=circle,inner sep=1pt,label=-45:] at (60:2.2){};

\node (u)[draw,shape=circle,inner sep=1.5pt,label=45:$u$] at (0,1){};
\node (u1)[draw,shape=circle,inner sep=1pt,label=-45:] at (75:2){};
\node (u2)[draw,shape=circle,inner sep=1pt,label=-45:] at (105:2){};

\draw [loosely dotted,thick](v1)--(v2) node[pos=.5,sloped,below] {$b$};
\draw [loosely dotted,thick](w1)--(w2) node[pos=.5,sloped,above] {$c$};
\draw [loosely dotted,thick](x1)--(x2) node[pos=.5,sloped,below] {$e$};
\draw [loosely dotted,thick](z1)--(z2) node[pos=.5,sloped,below] {$d$};
\draw [loosely dotted,thick](u1)--(u2) node[pos=.5,sloped,above] {$a$};

\draw (x)--(u) (x)--(v)(v)--(z) (u)--(w)(v)--(u) (w)--(z);
\draw (w)--(w1)(w)--(w2)(x)--(x1)(x2)--(x) ;
\draw (z)--(z1)(z2)--(z)(u)--(u1)(u2)--(u) (v)--(v1)(v2)--(v);

\node (t) at (0,-2) {$B_4$(a,b,c,d,e)};
\end{tikzpicture}
\begin{tikzpicture}
\node (v)[draw,shape=circle,inner sep=1.5pt,label=-45:$v_1$] at (0,0){};
\node (v1)[draw,shape=circle,inner sep=1pt,label=-45:] at (-60:1.3){};
\node (v2)[draw,shape=circle,inner sep=1pt,label=-45:] at (-120:1.3){};

\node (x)[draw,shape=circle,inner sep=1.5pt,label=-90:$w_2$] at (2,0){};
\node (x1)[draw,shape=circle,inner sep=1pt,label=-45:] at (3,0){};
\node (x2)[draw,shape=circle,inner sep=1pt,label=-45:] at (-20:2.5){};

\node (z)[draw,shape=circle,inner sep=1.5pt,label=-45:$w_1$] at (2,2){};
\node (z1)[draw,shape=circle,inner sep=1pt,label=-45:] at (30:3.5){};
\node (z2)[draw,shape=circle,inner sep=1pt,label=-45:] at (45:4){};

\node (w)[draw,shape=circle,inner sep=1.5pt,label=-45:$u$] at (0,2){};
\node (w1)[draw,shape=circle,inner sep=1pt,label=-45:] at (70:3){};
\node (w2)[draw,shape=circle,inner sep=1pt,label=-45:] at (95:3){};

\node (u)[draw,shape=circle,inner sep=1.5pt,label=-135:$u_1$] at (150:2){};
\node (u1)[draw,shape=circle,inner sep=1pt,label=-45:] at (135:3){};
\node (u2)[draw,shape=circle,inner sep=1pt,label=-45:] at (160:3){};

\node (y)[draw,shape=circle,inner sep=1.5pt,label=-45:$v_1$] at (0,1){};
\node (y1)[draw,shape=circle,inner sep=1pt,label=-45:] at (30:1.2){};
\node (y2)[draw,shape=circle,inner sep=1pt,label=-45:] at (60:1.7){};

\draw [loosely dotted,thick](v1)--(v2) node[pos=.5,sloped,below] {$b$};

\draw [loosely dotted,thick](x1)--(x2) node[pos=.5,sloped,below] {$f$};
\draw [loosely dotted,thick](z1)--(z2) node[pos=.5,sloped,above] {$e$};
\draw [loosely dotted,thick](w1)--(w2) node[pos=.5,sloped,above] {$a$};
\draw [loosely dotted,thick](u1)--(u2) node[pos=.5,sloped,above] {$c$};
\draw [loosely dotted,thick](y1)--(y2) node[pos=.5,sloped,above] {$d$};

\draw  (v)--(x) (v)--(y) (u)--(v)(x)--(z) (w)--(z)(y)--(w) (w)--(u);
\draw (v)--(v1)(v2)--(v)(x)--(x1)(x2)--(x);
\draw (w)--(w1)(w)--(w2)(y)--(y1)(y)--(y2);
\draw (z)--(z1)(z2)--(z)(u)--(u1)(u2)--(u);

\node (t) at (0,-2) {$B_5$(a,b,c,d,e,f)};
\end{tikzpicture}
\begin{tikzpicture}
\node (v)[draw,shape=circle,inner sep=1.5pt,label=-45:$u_1$] at (0,1){};
\node (v1)[draw,shape=circle,inner sep=1pt,label=-45:] at (75:2){};
\node (v2)[draw,shape=circle,inner sep=1pt,label=-45:] at (105:2){};

\node (x)[draw,shape=circle,inner sep=1.5pt,label=-45:$w_3$] at (1,1){};
\node (x1)[draw,shape=circle,inner sep=1pt,label=-45:] at (30:2.3){};
\node (x2)[draw,shape=circle,inner sep=1pt,label=-45:] at (60:2.3){};

\node (z)[draw,shape=circle,inner sep=1.5pt,label=45:$w_4$] at (1,0){};
\node (z1)[draw,shape=circle,inner sep=1pt,label=-45:] at (2,0){};
\node (z2)[draw,shape=circle,inner sep=1pt,label=-45:] at (-30:2){};

\node (w)[draw,shape=circle,inner sep=1.5pt,label=-45:$u_2$] at (0,0){};
\node (w1)[draw,shape=circle,inner sep=1pt,label=-45:] at (-65:1){};
\node (w2)[draw,shape=circle,inner sep=1pt,label=-45:] at (-120:1){};

\node (u)[draw,shape=circle,inner sep=1.5pt,label=-45:$u_4$] at (-1,0){};
\node (u1)[draw,shape=circle,inner sep=1pt,label=-45:] at (-2,0){};
\node (u2)[draw,shape=circle,inner sep=1pt,label=-45:] at (-155:2){};

\node (y)[draw,shape=circle,inner sep=1.5pt,label=-45:$u_3$] at (-1,1){};
\node (y1)[draw,shape=circle,inner sep=1pt,label=-45:] at (-2,1){};
\node (y2)[draw,shape=circle,inner sep=1pt,label=-45:] at (130:2.3){};

\draw [loosely dotted,thick](v1)--(v2) node[pos=.5,sloped,above] {$a$};
\draw [loosely dotted,thick](w1)--(w2) node[pos=.5,sloped,below] {$b$};
\draw [loosely dotted,thick](x1)--(x2) node[pos=.5,sloped,above] {$e$};
\draw [loosely dotted,thick](z1)--(z2) node[pos=.5,sloped,below] {$f$};
\draw [loosely dotted,thick](u1)--(u2) node[pos=.5,sloped,below] {$d$};
\draw [loosely dotted,thick](y1)--(y2) node[pos=.5,sloped,above] {$c$};

\draw  (x)--(z) (v)--(w) (x)--(v) (w)--(z)(y)--(v) (y)--(u) (w)--(u);
\draw (v)--(v1)(v2)--(v)(x)--(x1)(x2)--(x);
\draw (w)--(w1)(w)--(w2)(y)--(y1)(y)--(y2);
\draw (z)--(z1)(z2)--(z)(u)--(u1)(u2)--(u);

\node (t) at (0,-2) {$B_6$(a,b,c,d,e,f)};
\end{tikzpicture}

\begin{tikzpicture}
\node (v)[draw,shape=circle,inner sep=1.5pt,label=-45:$v$] at (0,0){};
\node (v1)[draw,shape=circle,inner sep=1pt,label=-45:] at (-60:1){};
\node (v2)[draw,shape=circle,inner sep=1pt,label=-45:] at (-120:1){};

\node (x)[draw,shape=circle,inner sep=1.5pt,label=-90:$w_1$] at (30:2){};
\node (x1)[draw,shape=circle,inner sep=1pt,label=-45:] at (15:3){};
\node (x2)[draw,shape=circle,inner sep=1pt,label=-45:] at (30:3){};

\node (z)[draw,shape=circle,inner sep=1.5pt,label=-45:$u$] at (0,2){};
\node (z1)[draw,shape=circle,inner sep=1pt,label=-45:] at (75:3){};
\node (z2)[draw,shape=circle,inner sep=1pt,label=-45:] at (100:3){};

\node (w)[draw,shape=circle,inner sep=1.5pt,label=-45:$v_1$] at (0,1){};
\node (w1)[draw,shape=circle,inner sep=1pt,label=-45:] at (35:1){};
\node (w2)[draw,shape=circle,inner sep=1pt,label=-45:] at (60:1.5){};

\node (u)[draw,shape=circle,inner sep=1.5pt,label=180:$u_1$] at (135:1.6){};
\node (u1)[draw,shape=circle,inner sep=1pt,label=-45:] at (140:2.5){};
\node (u2)[draw,shape=circle,inner sep=1pt,label=-45:] at (160:2){};

\draw [loosely dotted,thick](v1)--(v2) node[pos=.5,sloped,below] {$b$};
\draw [loosely dotted,thick](x1)--(x2) node[pos=.5,sloped,above] {$e$};
\draw [loosely dotted,thick](z1)--(z2) node[pos=.5,sloped,above] {$a$};
\draw [loosely dotted,thick](w1)--(w2) node[pos=.5,sloped,above] {$d$};
\draw [loosely dotted,thick](u1)--(u2) node[pos=.5,sloped,below] {$c$};
\draw  (v)--(x) (v)--(w) (u)--(v)(x)--(z) (w)--(z)(u)--(z) ;
\draw (v)--(v1)(v2)--(v)(x)--(x1)(x2)--(x);
\draw (w)--(w1)(w)--(w2);
\draw (z)--(z1)(z2)--(z)(u)--(u1)(u2)--(u);

\node (t) at (0,-2) {$B_7$(a,b,c,d,e,f)};
\end{tikzpicture}
\begin{tikzpicture}
\node (v)[draw,shape=circle,inner sep=1.5pt,label=135:$u_1$] at (0,0){};
\node (v1)[draw,shape=circle,inner sep=1pt,label=-45:] at (-75:1){};
\node (v2)[draw,shape=circle,inner sep=1pt,label=-45:] at (-120:1){};

\node (x)[draw,shape=circle,inner sep=1.5pt,label=90:$v_4$] at (-45:1){};
\node (x1)[draw,shape=circle,inner sep=1pt,label=-45:] at (-60:2){};
\node (x2)[draw,shape=circle,inner sep=1pt,label=-45:] at (-30:2){};

\node (z)[draw,shape=circle,inner sep=1.5pt,label=180:$v_3$] at (1.6,0){};
\node (z1)[draw,shape=circle,inner sep=1pt,label=-45:] at (-10:2.5){};
\node (z2)[draw,shape=circle,inner sep=1pt,label=-45:] at (10:2.5){};

\node (w)[draw,shape=circle,inner sep=1.5pt,label=-90:$v_2$] at (45:1){};
\node (w1)[draw,shape=circle,inner sep=1pt,label=-45:] at (60:2){};
\node (w2)[draw,shape=circle,inner sep=1pt,label=-45:] at (30:2){};

\node (u)[draw,shape=circle,inner sep=1.5pt,label=135:$u_2$] at (0,1){};
\node (u1)[draw,shape=circle,inner sep=1pt,label=-45:] at (70:2){};
\node (u2)[draw,shape=circle,inner sep=1pt,label=-45:] at (95:2){};

\node (y)[draw,shape=circle,inner sep=1.5pt,label=-135:$u_3$] at (-1,1){};
\node (y1)[draw,shape=circle,inner sep=1pt,label=-45:] at (130:2.2){};
\node (y2)[draw,shape=circle,inner sep=1pt,label=-45:] at (155:2){};

\node (s)[draw,shape=circle,inner sep=1.5pt,label=135:$u_4$] at (-1,0){};
\node (s1)[draw,shape=circle,inner sep=1pt,label=-45:] at (-2,0){};
\node (s2)[draw,shape=circle,inner sep=1pt,label=-45:] at (-150:1.7){};

\draw [loosely dotted,thick](v1)--(v2) node[pos=.5,sloped,below] {$a$};
\draw [loosely dotted,thick](x1)--(x2) node[pos=.5,sloped,below] {$g$};
\draw [loosely dotted,thick](z1)--(z2) node[pos=.5,sloped,below] {$f$};
\draw [loosely dotted,thick](w1)--(w2) node[pos=.5,sloped,above] {$e$};
\draw [loosely dotted,thick](u1)--(u2) node[pos=.5,sloped,above] {$b$};
\draw [loosely dotted,thick](y1)--(y2) node[pos=.5,sloped,above] {$c$};
\draw [loosely dotted,thick](s1)--(s2) node[pos=.5,sloped,below] {$d$};

\draw  (v)--(x) (v)--(s) (u)--(v)(v)--(w)(x)--(z) (w)--(z)(u)--(y) (y)--(s);
\draw (v)--(v1)(v2)--(v)(x)--(x1)(x2)--(x);
\draw (w)--(w1)(w)--(w2)(y)--(y1)(y)--(y2);
\draw (z)--(z1)(z2)--(z)(u)--(u1)(u2)--(u)(s)--(s1)(s)--(s2);

\node (t) at (0,-2) {$B_8$(a,b,c,d,e,f,g)};
\end{tikzpicture}
\caption{Seven types of bicyclic graphs} \label{fig:pepper}
\end{figure}
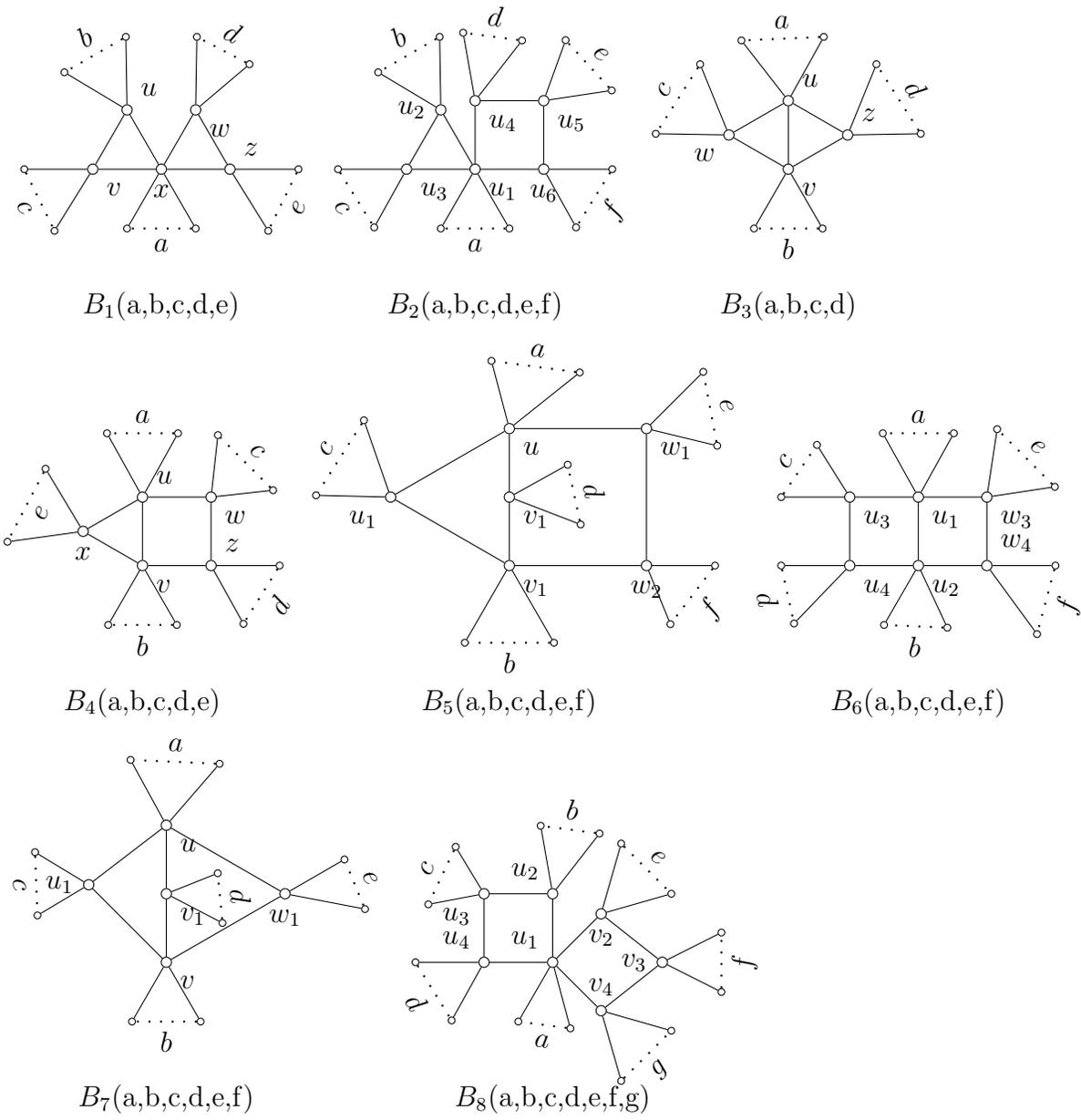
\begin{lemma}\label{lemma4.1}
Let $B_1(a,b,c,d,e)$ be the graph defined above, if we move all pendent
edges from vertices $u, v, w, z$ to vertex $x$. If $a+b+c+d\neq 0$,
then $$\varphi_i(B_1(a,b,c,d,e))\geq \varphi_i(B_1(a+b+c+d+e,0,0,0,0)), i=0,1,\cdots,n,$$
with equality holds if and only if $i\in\{0,1,n-1,n\}$.
\end{lemma}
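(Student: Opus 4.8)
The plan is to argue directly from the TU-subgraph expansion of Theorem~\ref{theorem1.2}, writing $\varphi_i(G)=\sum_H W(H)$ with the sum over TU-subgraphs $H$ with $i$ edges, where $G=B_1(a,b,c,d,e)$ and $G'=B_1(a+b+c+d+e,0,0,0,0)$. Because $u,v,w,z$ play symmetric roles and each is adjacent to $x$, it suffices to treat a single elementary move — relocating one pendant $p$ from a triangle vertex, say $u$, to $x$ — and then iterate. The three trivial indices are settled first: the move alters neither $|E|$ nor the two triangular cycles, so $\varphi_0(G)=\varphi_0(G')=1$ and $\varphi_1(G)=\varphi_1(G')=2(n+1)$, while the value of $\varphi_n$, which by the non-bipartite formula recalled in the introduction depends only on the two odd cycles (here two triangles sharing the single vertex $x$), is likewise unchanged.

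For the elementary move I would build an injection $f$ from the TU-subgraphs of the graph with $p$ at $x$ into those with $p$ at $u$, preserving the number of edges, and prove $W(f(H'))\ge W(H')$. If the pendant edge $xp$ is absent then $p$ is isolated and $f$ acts as the identity on edges; if $xp$ is present and $x,u$ lie in one component then $f$ merely re-roots the leaf $p$ at $u$, preserving every component order. The delicate case is $xp\in H'$ with $x$ and $u$ in distinct components: naively replacing $xp$ by $up$ transfers $p$ from the $x$-component (order $\alpha$) to the $u$-component (order $\beta$) and changes the weight by $\alpha-\beta-1$, which can be negative. Here the triangle is essential, since $x$ and $u$ share the common neighbour $v$: when the components are separated one has $xu\notin H'$, so instead of moving $p$ one exchanges the pendant edge for a triangle edge — for instance sending the local pattern $\{xp,uv\}$ to $\{up,xv\}$ — keeping the two orders at $\alpha$ and $\beta$ and hence preserving $W$; the remaining sub-patterns are treated the same way and give $W(f(H'))\ge W(H')$. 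Summing over all TU-subgraphs with $i$ edges yields $\varphi_i(G)\ge\varphi_i(G')$ for every $i$, and provided the move is non-trivial this is strict at the intermediate indices.

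I expect the main obstacle to be determining the exact set of equality indices rather than the inequality itself. As the transformation concentrates pendants at the central vertex $x$ — morally an iterated $\sigma$-operation in the sense of Theorem~\ref{theorem2.1} — and $B_1$ is non-bipartite, the natural expectation is that equality is confined to the extreme indices, Theorem~\ref{theorem2.1} itself giving equality only at $\{0,1,n\}$ in the non-bipartite case. The subtle index is $n-1$: a TU-subgraph with $n-1$ edges has a single tree component, and since the two triangles meet only at $x$ at most one can be the odd unicyclic component, so $\varphi_{n-1}$ splits into an invariant spanning-tree part (every pendant edge is forced into each spanning tree, so the count equals the $9$ spanning trees of the core $B(3,3)$) plus a sum over configurations (one triangle-based odd unicyclic component)\,$+$\,(one tree). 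These latter terms are sensitive to where the pendants sit, because a pendant split off from its triangle at a peripheral vertex can enlarge a tree component and raise its weight relative to the same pendant placed at $x$; I would therefore verify the equality claim at $n-1$ explicitly on a small instance before asserting it, and this bookkeeping of the odd-unicyclic-plus-tree terms is the step I expect to demand the most care.
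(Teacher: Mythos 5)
Your reduction of the indices $0,1,n$ and your identification of the delicate case are correct, but the key technical step of your plan fails as stated. In the delicate case ($xp\in H^\prime$ with $x$ and $u$ in distinct components) your swap $\{xp,uv\}\mapsto\{up,xv\}$ does \emph{not} keep the two component orders at $\alpha$ and $\beta$: deleting $uv$ splits the $u$-component into a piece of order $\beta_1$ containing $u$ and a piece of order $\beta_2$ containing $v$, and after the swap the orders become $\alpha-1+\beta_2$ and $\beta_1+1$, so (for tree components) the weight changes by $(\beta_2-1)(\beta_1+1-\alpha)\cdot N$, which is negative whenever $\beta_2\geq 2$ and $\alpha>\beta_1+1$. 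A concrete failure inside the scope of the lemma: in $B_1(1,0,1,0,0)$ (pendant $p$ at $x$, pendant $q$ at $v$, $n=7$), take $H^\prime$ with edge set $\{xp,xw,wz,uv,vq\}$, so $W(H^\prime)=4\cdot 3=12$; your image has edge set $\{up,xv,xw,wz,vq\}$ and weight $5\cdot 2=10<12$. Thus a term-by-term, weight-non-decreasing comparison with this map is impossible, and this is precisely why the paper does not argue pointwise in the all-tree configurations: in its Case 1 it compares \emph{sums} of weights over grouped families of TU-subgraphs, quoting Lemma 3.4 of He and Shan \cite{he2010}, and it reserves pointwise estimates for the case where $x$ lies in an odd unicyclic component, where they do work because a unicyclic component contributes the factor $4$ regardless of its order, so vertices expelled from it can only enlarge tree components. (You also never address injectivity of $f$, which mixes three different rules.) To repair your proof you need either this kind of grouped comparison or a genuinely different bijection; no local re-pairing of $\{xp,uv\}$ preserves all component orders.

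On the equality set, your suspicion is better than the paper's statement: equality at $i=n-1$ is in fact \emph{false} whenever pendants are really moved, and the small-instance check you proposed confirms it. For $n=6$ one computes $Q(B_1(1,0,0,0,0),x)=x^6-14x^5+70x^4-164x^3+193x^2-110x+24$ while $Q(B_1(0,1,0,0,0),x)=x^6-14x^5+72x^4-174x^3+209x^2-118x+24$, so $\varphi_5=110$ against $\varphi_5=118$. Combinatorially this matches your bookkeeping: an $(n-1)$-edge TU-subgraph of a $B_1$-type graph is either a spanning tree or one triangle-based odd unicyclic component plus one tree, and a pendant sitting at $u$ rather than at $x$ strictly enlarges the tree that remains when the \emph{other} triangle serves as the unicyclic part. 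Hence the correct equality set in Lemma~\ref{lemma4.1} is $\{0,1,n\}$, exactly as in the non-bipartite case of Theorem~\ref{theorem3.2}; the claim of equality at $n-1$ in the statement and in the first sentence of the paper's proof is an error (carried over from the bipartite case), and indeed the paper's own proof ends by asserting strict inequality for $2\leq i\leq n-1$, contradicting its opening line. So carry out the verification you proposed, conclude strictness at $n-1$, and restate the lemma accordingly.
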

\begin{proof}
The equality $\varphi_i(B_1(a,b,c,d,e))=\varphi_i(B_1(a+b+c+d+e,0,0,0,0)), i=0,1,n-1,n$
can be proved as the proof of Theorem~\ref{theorem3.2}.

For $2\leq i\leq n-2$, denote $\mathcal{H}_i^\prime$ and $\mathcal{H}_i$
the sets of all TU-subgraphs of $G^\prime$ and $G$ with exactly $i$ edges,
respectively. Let $\mathcal{H}_i^\prime=\mathcal{H}_i^{\prime 1}\cup \mathcal{H}_i^{\prime 2}
\cup \mathcal{H}_i^{\prime 3}\cup \mathcal{H}_i^{\prime 4}\cup \mathcal{H}_i^{\prime 5}$,
where $\mathcal{H}_i^{\prime j} (j=1,2,3,4,5)$ denotes vertices
$u, v, w, z, x$ belong to exactly $j$ components. $\mathcal{H}_i^j$
can be defined similarly.

For an arbitrary $H^\prime\in\mathcal{H}_i^\prime$,
assume $x$ is in a component $R^\prime$ of $H^\prime$.
Denote $a_1 (\mbox{ resp. } b_1,c_1,d_1,e_1)$ be the number of isolated
vertices in the set $N_{G^\prime}(x)\cap N_G(u)$
$( \mbox{ resp. } N_{G^\prime}(x)\cap N_G(v),
N_{G^\prime}(x)\cap N_G(w), N_{G^\prime}(x)\cap N_G(z), N_{G^\prime}(x)\cap (N_G(u)\setminus \{v,z\})).$
Write $A= a+1-a_1, B=b+1-b_1, C=c+1-c_1, D=d+1-d_1, E=e+1-e_1$,
without loss of generality, assume $N_G(u)\cap N_{R^\prime}(x)=\{u^1,\cdots, u^{b-b_1}\},
N_G(v)\cap N_{R^\prime}(x)=\{v^1,\cdots, v^{c-c_1}\}, N_G(w)\cap N_{R^\prime}(x)=\{w^1,\cdots, w^{d-d_1}\},
N_G(z)\cap N_{R^\prime}(x)=\{z^1,\cdots, z^{e-e_1}\}$, and
$(N_G(x)\setminus \{v,z\})\cap N_{R^\prime}(x)=\{x^1,\cdots, x^{a-a_1}\}$.
Define $H$ with $V(H)=V(H^\prime)$, $$E(H)=E(H^\prime)-\{xu^1,\cdots, xu^{b-b_1},
xv^1,\cdots, xv^{c-c_1}, xw^1,\cdots, xw^{d-d_1}, xz^1,\cdots, xz^{e-e_1}\}$$
$$+\{uu^1,\cdots, uu^{b-b_1},vv^1,\cdots, vv^{c-c_1}, ww^1,\cdots, ww^{d-d_1}, zz^1,\cdots, zz^{e-e_1}\}.$$
Then $H\in \mathcal{H}_i$, let $f: \mathcal{H}_i^\prime\rightarrow \mathcal{H}_i$, and
$\mathcal{H}_i^*=f(\mathcal{H}_i^\prime)=\{f(H^\prime)|H^\prime\in \mathcal{H}_i^\prime\}$.
It is easy to see that $H^\prime\in\mathcal{H}_i^{\prime j}\Leftrightarrow H\in \mathcal{H}_i^j,
j=1,2,3,4,5$.

If we include vertices $u, v, w, z, x$ in a component of $H^\prime$,
we have equal sizes of components in $H^\prime$ and $H$, respectively.
Then $W(H^\prime)=W(H)$.

Assume vertices $u, v, w, z, x$ belong to at least two components.
We distinguish the proof into two cases.

Case 1: When all components of $H^\prime$ are trees, and denote the set of this
kind of $H^\prime$ as $\mathcal{H}_{(1)}^\prime$. Then
$\sum_{H^\prime\in \mathcal{H}_{(1)}^\prime} (W(H)-W(H^\prime))\geq 0$,
with equality holds if and only if $i\in\{0,1,n-1,n\}$
by Lemma 3.4 in \cite{he2010}.

Case 2: When vertex $x$ belongs to an odd unicyclic component $U^\prime$,
without loss of generality, assume $C_1: uvx$ is a subgraph of $U^\prime$.

Subcase 2.1: Both $\{w,z\}$ belong to $U^\prime$, then $W(H^\prime)=4,
W(H)=4$. Denote the set of this
kind of $H^\prime$ as $\mathcal{H}_{(21)}^\prime$. Then
$\sum_{H^\prime\in \mathcal{H}_{(21)}^\prime} (W(H)-W(H^\prime))= 0$.

Subcase 2.2: At most one vertex in $\{w,z\}$ belongs to $U^\prime$.
Denote the set of this kind of $H^\prime$ as $\mathcal{H}_{(22)}^\prime$.
When $wx\in H^\prime, wz,xz \not\in H^\prime$, then $W(H^\prime)=4\cdot 1,
W(H)=4\cdot D$, thus $W(H)-W(H^\prime)\geq 0$.
When $xz\in H^\prime, wz,wz \not\in H^\prime$, then $W(H^\prime)=4\cdot 1,
W(H)=4\cdot C$, thus $W(H)-W(H^\prime)\geq 0$.
When $wz\in H^\prime, wx,xz \not\in H^\prime$, then $W(H^\prime)=4\cdot 2,
W(H)=4\cdot (C+D)$, thus $W(H)-W(H^\prime)\geq 0$.
When $wz,wx,xz \not\in H^\prime$, then $W(H^\prime)=4\cdot 1\cdot 1,
W(H)=4\cdot C\cdot D$, thus $W(H)-W(H^\prime)\geq 0$.
Then
$\sum_{H^\prime\in \mathcal{H}_{(22)}^\prime} (W(H)-W(H^\prime))\geq 0$.

Thus by summing over all possible subsets of $\mathcal{H}_i^\prime$,
$(\mathcal{H}_i^\prime=\mathcal{H}_{(1)}^{\prime}\cup\mathcal{H}_{(21)}^{\prime}\cup\mathcal{H}_{(22)}^{\prime}),$
from Theorem~\ref{theorem1.2} and $f$ is an injection on the whole.
Then $$\varphi_i(G^\prime)=\sum_{H^\prime\in\mathcal{H}_i^\prime} W(H^\prime)
<\sum_{H\in\mathcal{H}_i^*} W(H)\leq\sum_{H\in\mathcal{H}_i} W(H)=\varphi_i(G)$$ holds
for $i=2,3,\cdots,n-1$.

\end{proof}

Similar to the proof of Lemma~\ref{lemma4.1}, and by Lemma 3.2
and Lemma 3.3 in \cite{he2010}, the following Lemma holds:

\begin{lemma}\label{lemma4.2}
(1). Let $B_2(a,b,c,d,e,f)$ be the graph defined above, if we move all pendent
edges from vertices $u_2, u_3$ to vertex $u_1$. If $b+c\neq 0$,
then $$\varphi_i(B_2(a,b,c,d,e,f))\geq \varphi_i(B_2(a+b+c,0,0,d,e,f)), i=0,1,\cdots,n,$$
with equality holds if and only if $i\in\{0,1,n-1,n\}$.

(2). Let $B_2(a,0,0,d,e,f)$ be the graph defined above, if we move all pendent
edges from vertices $u_4, u_5, u_6$ to vertex $u_1$. If $d+e+f\neq 0$,
then $$\varphi_i(B_2(a,0,0,d,e,f))\geq \varphi_i(B_2(a+d+e+f,0,0,0,0,0)), i=0,1,\cdots,n,$$
with equality holds if and only if $i\in\{0,1,n-1,n\}$.

(3). Let $B_3(a,b,c,d)$ be the graph defined above, if we move all pendent
edges from vertices $v, w, z$ to vertex $u$. If $b+c+d\neq 0$,
then $$\varphi_i(B_3(a,b,c,d))\geq \varphi_i(B_3(a+b+c+d,0,0,0)), i=0,1,\cdots,n,$$
with equality holds if and only if $i\in\{0,1,n-1,n\}$.

(4). Let $B_4(a,b,c,d,e)$ be the graph defined above, if we move all pendent
edges from vertices $x$ to vertex $u$. If $e\neq 0$,
then $$\varphi_i(B_4(a,b,c,d,e))\geq \varphi_i(B_4(a+e,b,c,d,0)), i=0,1,\cdots,n,$$
with equality holds if and only if $i\in\{0,1,n-1,n\}$.

(5). Let $B_4(a,b,c,d,0)$ be the graph defined above, if we move all pendent
edges from vertices $w,z$ to vertex $u$. If $b+c\neq 0$,
then $$\varphi_i(B_4(a,b,c,d,0))\geq \varphi_i(B_4(a+c+d,b,0,0,0)), i=0,1,\cdots,n,$$
with equality holds if and only if $i\in\{0,1,n-1,n\}$.

(6). Let $B_4(a,b,0,0,0)$ be the graph defined above, if we move all pendent
edges from vertices $v$ to vertex $u$. If $b\neq 0$,
then $$\varphi_i(B_4(a,b,0,0,0))\geq \varphi_i(B_4(a+b,0,0,0,0)), i=0,1,\cdots,n,$$
with equality holds if and only if $i\in\{0,1,n-1,n\}$.

(7). Let $B_5(a,b,c,d,e,f)$ be the graph defined above, if we move all pendent
edges from vertices $\{u_1,u_2,v_1,v_2,w\}$ to vertex $u$. If $b+c+d+e+f\neq 0$,
then $$\varphi_i(B_5(a,b,c,d,e,f))\geq \varphi_i(B_5(a+b+c+d+e+f,0,0,0,0,0)), i=0,1,\cdots,n,$$
with equality holds if and only if $i\in\{0,1,n-1,n\}$.
\end{lemma}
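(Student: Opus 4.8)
The plan is to establish all seven items by a single template, namely the one already carried out in Lemma~\ref{lemma4.1}. In each item the map is a \emph{pendant concentration}: all pendant edges sitting at the listed vertices are transferred to one target vertex $t$ (with $t=u_1$ in (1),(2), $t=u$ in (3) and in (4)--(6), and $t=u$ in (7)). Write $G$ for the graph on the left of the inequality and $G'$ for the graph on the right, so that $G'$ is obtained from $G$ by such a concentration. Since the move changes neither the number of edges nor the two cycles (it fixes the $2$-edge-connected core of the base $B_j$), the framework of Theorem~\ref{theorem1.2} applies verbatim.

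First I would dispose of the boundary indices. Exactly as in the proof of Theorem~\ref{theorem3.2} and of Lemma~\ref{lemma4.1}, one gets $\varphi_0(G)=\varphi_0(G')=1$ and $\varphi_1(G)=\varphi_1(G')=2(n+1)$ from $|E(G)|=|E(G')|$; next $\varphi_n(G)=\varphi_n(G')$ because $g_1,g_2$ and the cycle edge-sets are preserved; and $\varphi_{n-1}(G)=\varphi_{n-1}(G')$ because every $(n-1)$-edge TU-subgraph has exactly one tree component, so its contribution is governed entirely by the unchanged cyclic core. This yields equality on $\{0,1,n-1,n\}$.

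For the interior range $2\le i\le n-2$ I would, in each item, build an injection $f:\mathcal{H}_i'\to\mathcal{H}_i$ from the TU-subgraphs of $G'$ to those of $G$: given $H'$, detach from $t$ the pendant edges that in $G$ belong to the other base vertices, reattach them to their home vertices, and perform the analogous edge swap on the base when the corresponding base edge of $B_j$ is present in $H'$, precisely as the map $f$ in Lemma~\ref{lemma4.1}. One checks that $f$ preserves the edge count, lands in $\mathcal{H}_i$, and is injective. I would then partition $\mathcal{H}_i'$ according to how many components the base vertices of $B_j$ occupy: if they all lie in one component, then $H$ and $H'$ have components of equal orders and $W(H)=W(H')$; otherwise I split into the tree case and the odd-unicyclic case. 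In the tree case the concentrated ``star'' at $t$ is broken into smaller stars hung at distinct home vertices, replacing a merged factor $(\sum_j s_j+1)$ of $W(H')$ by a product $\prod_j(s_j+1)$ in $W(H)$; this is exactly Lemmas~3.2, 3.3 and~3.4 of \cite{he2010} and gives $W(H)\ge W(H')$, with strictness on the interior. In the odd-unicyclic case, where $t$ lies in an odd unicyclic component $U'$ carrying the factor $4$, the odd cycle stays inside one component and the redistribution again turns a merged factor into a product of strictly larger tree orders, so $W(H)\ge W(H')$. Summing and using $\sum_{H\in\mathcal{H}_i^*}W(H)\le\sum_{H\in\mathcal{H}_i}W(H)$ (since $f$ need not be onto) gives $\varphi_i(G')<\varphi_i(G)$.

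The main obstacle is the odd-unicyclic bookkeeping, which here is more intricate than the tree case because the target $t$ always sits on an odd cycle of $B_j$ ($u_1$ on the triangle of $B_2$, $u$ on a triangle of $B_3$ and $B_4$, and $u$ on the odd cycle of $B_5$), so this case genuinely arises in every one of the seven items, including item (2) where the transferred pendants originate on the even $C_4$. For each item I would therefore enumerate the position of every non-target base vertex relative to $U'$ (inside or outside the odd component, together with the survival of the connecting base edges in $H'$), mirroring Subcases~2.1 and~2.2 of Lemma~\ref{lemma4.1}, and verify $W(H)\ge W(H')$ in each subcase (typically comparing $4\cdot 1$ against $4\cdot D$ with a tree order $D\ge 1$, or a sum against a product). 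The seven items differ only in the number of base vertices and their labels, so once this enumeration is completed for one configuration of each type it closes the remaining parts, and the equality statement $i\in\{0,1,n-1,n\}$ follows by combining the boundary computation with the strict interior inequality.
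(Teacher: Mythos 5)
Your plan for the interior indices $2\le i\le n-2$ --- the injection $f$ on TU-subgraphs, the reduction of the all-tree case to Lemmas 3.2--3.4 of \cite{he2010}, and the odd-unicyclic bookkeeping modelled on Subcases 2.1--2.2 of Lemma~\ref{lemma4.1} --- is exactly what the paper intends (its entire proof of this lemma is the sentence ``Similar to the proof of Lemma~\ref{lemma4.1}, and by Lemma 3.2 and Lemma 3.3 in \cite{he2010}''), so that part of your proposal is sound and matches the source.

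The genuine gap is your treatment of $i=n-1$. You claim $\varphi_{n-1}(G)=\varphi_{n-1}(G')$ because every $(n-1)$-edge TU-subgraph has exactly one tree component, ``so its contribution is governed entirely by the unchanged cyclic core.'' The premise is correct but the conclusion does not follow: besides spanning trees, an $(n-1)$-edge TU-subgraph can consist of one odd unicyclic component together with one tree, and both the order of that tree and the set of realizable configurations depend on where the pendant vertices hang --- and every graph in this lemma is non-bipartite, so such configurations exist. Concretely, in item (3) take $n=5$, $G=B_3(0,0,1,0)$ (one pendant $p$ at $w$) and $G'=B_3(1,0,0,0)$. In $G$ the TU-subgraph formed by the triangle $uzv$ together with the tree on $\{w,p\}$ has weight $4\cdot 2=8$ and has no counterpart in $G'$, where $p$ hangs at $u$; summing all weights (equivalently, summing the $4\times 4$ principal minors of $Q$) gives $\varphi_4(G)=68>64=\varphi_4(G')$. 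So equality at $n-1$ simply fails; for these non-bipartite graphs equality holds only for $i\in\{0,1,n\}$, exactly as in the non-bipartite case of Theorem~\ref{theorem3.2}. To be fair, this defect is inherited from the paper: the statements of Lemma~\ref{lemma4.1} and Lemma~\ref{lemma4.2} assert equality at $n-1$, while the paper's own justification (``as the proof of Theorem~\ref{theorem3.2}'') covers $n-1$ only in the bipartite case. But your explicit argument for that step is independently false, and no amount of bookkeeping can repair it, since the asserted equality is not true. Note that the inequality $\varphi_i(G)\ge\varphi_i(G')$ itself --- which is all that the proofs of Theorems~\ref{theorem5.1} and~\ref{theorem5.3} actually use --- is unaffected, and your interior argument delivers it.
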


For convenience, we define $B(P_3,P_3,P_1)$ as $B_6$, and $V(B_6)=\{u_1,u_2,u_3,u_4,w_3,w_4\}$,
where $d(u_3)=d(u_4)=d(w_3)=d(w_4)=2, d(u_1)=d(u_2)=3$. The graph $B_6(a,b,c,d,e,f)$ is
obtained from $B_6$ by adding $a, b, c, d, e, f$ pendent vertices at
vertices $u_1, u_2, u_3, u_4, w_3, w_4$, respectively.

We define $B(P_2,P_2,P_2)$ as $B_7$, and $V(B_7)=\{u,v,u_1,w_1,v_1\}$,
where $d(u_1)=d(v_1)=d(w_1)=2, d(u)=d(v)=3$. The graph $B_6(a,b,c,d,e)$ is
obtained from $B_7$ by adding $a, b, c, d, e$ pendent vertices at
vertices $u, v, u_1, w_1, v_1$, respectively.

We define $B(4,4)$ as $B_8$, and $V(B_8)=\{u_1,u_2,u_3,u_4,v_2,v_3,v_4\}$,
where $d(u_2)=d(u_3)=d(u_4)=d(v_2)=d(v_3)=d(v_4)=2, d(u_1)=4$.
The graph $B_8(a,b,c,d,e,f,g)$ is obtained from $B_8$ by adding
$a, b, c, d, e, f, g$ pendent vertices at $u_1, u_2, u_3, u_4, v_2, v_3, v_4$,
respectively. (See fig. 3).

\begin{lemma}\label{lemma4.3}
Let $B_6(a,b,c,d,e,f)$ be the graph defined above, if we move all pendent
edges from vertices $u_3, u_4$ to vertex $u_1$. If $c+d\neq 0$,
then $$\varphi_i(B_6(a,b,c,d,e,f))\geq \varphi_i(B_6(a+c+d,b,0,0,e,f)), i=0,1,\cdots,n,$$
with equality holds if and only if $i\in\{0,1,n-1,n\}$.
\end{lemma}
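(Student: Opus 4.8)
The plan is to run the TU-subgraph argument of Theorem~\ref{theorem1.2} in the same style as the proof of Lemma~\ref{lemma4.1}, while exploiting a structural simplification peculiar to $B_6$. The three cycles of $B_6=B(P_3,P_3,P_1)$ have lengths $4$, $4$ and $6$, all even, so $B_6$---and hence every graph $B_6(a,b,c,d,e,f)$ obtained from it by attaching pendent vertices---is bipartite. Therefore no TU-subgraph can contain an odd unicyclic component, every TU-subgraph is a spanning forest with weight $W(H)=\prod_j n_j$, and $\varphi_i=c_i$ for all the graphs involved. This removes the odd-unicyclic situation (Case~2 of Lemma~\ref{lemma4.1}) and reduces the whole comparison to the forest case.

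First I would dispose of the boundary coefficients. Write $G=B_6(a,b,c,d,e,f)$ and $G'=B_6(a+c+d,b,0,0,e,f)$. As in Theorem~\ref{theorem3.2}, $\varphi_0(G)=\varphi_0(G')=1$ and $\varphi_1(G)=\varphi_1(G')=2|E(G)|$, since the transformation changes neither $n$ nor $|E(G)|$. Because both graphs are bipartite, $\varphi_n(G)=\varphi_n(G')=0$; and $\varphi_{n-1}$ counts spanning trees, which is unaffected since moving pendent edges leaves the base $\overline{G}=B_6$ unchanged (every pendent edge lies in every spanning tree). Hence equality holds exactly at $i\in\{0,1,n-1,n\}$.

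For the middle range $2\le i\le n-2$ I would set up an injection $f:\mathcal{H}_i'\to\mathcal{H}_i$ between the TU-subgraphs (forests) of $G'$ and of $G$ having $i$ edges, precisely as in Lemma~\ref{lemma4.1}. Fix the canonical correspondence identifying the $c+d$ relocated pendent vertices at $u_1$ in $G'$ with the $c$ pendents of $u_3$ and the $d$ pendents of $u_4$ in $G$. Given $H'\in\mathcal{H}_i'$, let $R'$ be the component containing $u_1$; detach from $u_1$ those relocated pendent edges present in $H'$ and re-attach each to its home vertex $u_3$ or $u_4$, producing $H=f(H')\in\mathcal{H}_i$ with $V(H)=V(H')$ and the same edge count. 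This $f$ is injective. Since every component is a tree, the weight comparison reduces to a product-of-component-sizes estimate: redistributing these leaves from the overloaded vertex $u_1$ out to $u_3$ and $u_4$ never decreases $\prod_j n_j$, so $W(H)\ge W(H')$, with strict inequality for at least one $H'$ in each such $i$. This is exactly the content of Lemma~3.4 of~\cite{he2010}, which I would invoke via $\varphi_i=c_i$; its internal case distinction plays the role of Subcases~1.1--1.3 of Lemma~\ref{lemma4.1}. Summing, $\varphi_i(G')=\sum_{H'\in\mathcal{H}_i'}W(H')<\sum_{H\in f(\mathcal{H}_i')}W(H)\le\sum_{H\in\mathcal{H}_i}W(H)=\varphi_i(G)$.

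The main obstacle is precisely this weight inequality and its equality characterization in the forest case---checking that no way of pulling the $u_3$- and $u_4$-pendents off $u_1$ can increase a product of component sizes, and that strictness occurs for every $2\le i\le n-2$. Here bipartiteness is the decisive simplification: because it rules out odd unicyclic components entirely, the problem becomes identical to the Laplacian-coefficient setting, so the estimate is supplied verbatim by Lemma~3.4 of~\cite{he2010}, rather than requiring the girth-dependent bookkeeping (the factor $4^c$, the conditions $g(C_1)\ge 5$) that complicates Theorem~\ref{theorem3.4}. Once that estimate is cited, the statement of Lemma~\ref{lemma4.3} follows immediately.
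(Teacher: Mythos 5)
Your opening reduction is sound and matches the setting of the paper's own argument: all three cycles of $B_6$ are even, so $B_6(a,b,c,d,e,f)$ is bipartite, every TU-subgraph is a spanning forest, $\varphi_i=c_i$, and your handling of $i\in\{0,1,n-1,n\}$ is correct (the paper's proof likewise has no $4^c$ factors anywhere). The gap is the core estimate for $2\le i\le n-2$. Your injection $f$ (detach the relocated pendent edges from $u_1$ and return each to its home vertex) is exactly the paper's map, but your claim that it is \emph{pointwise} weight-non-decreasing --- ``redistributing these leaves from the overloaded vertex $u_1$ out to $u_3$ and $u_4$ never decreases $\prod_j n_j$, so $W(H)\ge W(H')$'' --- is false. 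Take $c+d=1$: let $G=B_6(0,0,1,0,0,0)$ and $G'=B_6(1,0,0,0,0,0)$ (so $n=7$), and let $H'$ consist of the relocated pendent edge at $u_1$ together with the two edges of the $4$-cycle through $u_3,u_4$ that are not incident with $u_1$ (so $i=3$). Then $H'$ has components of orders $2,3,1,1$, giving $W(H')=6$, while $H=f(H')$ has components of orders $1,4,1,1$, giving $W(H)=4<W(H')$. This is precisely the paper's Subcase~1.5, whose difference $[(B+C+D+3)(A+1)-(A+C+D+1)(B+3)]\cdot N$ simplifies to $(A-B-2)(C+D)\cdot N$ and is negative whenever $C+D>0$ and $A<B+2$. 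That is exactly why the paper does not argue term by term: it partitions the forests by which cycle edges they contain, computes each subcase difference (several can be negative), and obtains nonnegativity only for sums over bijectively paired subcases (``we can find a bijection between every two subsets of the above subcases''). Your proposal contains no substitute for this grouping argument, nor for the strictness needed at every $i$ in the middle range.

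The citation of Lemma~3.4 of \cite{he2010} does not close this gap. As the present paper uses that lemma (Case~1 of the proof of Lemma~\ref{lemma4.1}), it concerns the base $B(3,3)$: pendent edges are moved from the four degree-two vertices onto the common cut vertex $x$. It is not a statement about $B(P_3,P_3,P_1)$, where the pendants move around a $4$-cycle and no cut vertex separates $u_1$ from $u_3,u_4$; and no correct lemma can have the pointwise content you ascribe to it, since that statement is false. The paper's own structure is telling here: for the non-bipartite bases $B_1,\dots,B_5$ the authors freely reduce the all-trees case to He--Shan's Lemmas~3.2--3.4 (see Lemmas~\ref{lemma4.1} and~\ref{lemma4.2}), yet for the bipartite bases $B_6,B_7,B_8$ --- where, by your own observation, a Laplacian-coefficient lemma would settle everything at once --- they instead write out the long direct case analysis of Lemma~\ref{lemma4.3} and say Lemma~\ref{lemma4.4} is similar. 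If the needed estimate were ``supplied verbatim'' by \cite{he2010}, these lemmas would be one-line corollaries. So the bipartite reduction is a good first step, but the combinatorial heart of the lemma remains unproved in your proposal.
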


\begin{proof}
The equality $\varphi_i(B_6(a,b,c,d,e,f))=\varphi_i(B_6(a+c+d,b,0,0,e,f)), i=0,1,n-1,n$
can be proved as the proof of Theorem~\ref{theorem3.2}.

For $2\leq i\leq n-2$, denote $\mathcal{H}_i^\prime$ and $\mathcal{H}_i$
the sets of all TU-subgraphs of $G^\prime$ and $G$ with exactly $i$ edges,
respectively. Let $\mathcal{H}_i^\prime=\mathcal{H}_i^{\prime 1}\cup \mathcal{H}_i^{\prime 2}
\cup \mathcal{H}_i^{\prime 3}\cup \mathcal{H}_i^{\prime 4}$,
where $\mathcal{H}_i^{\prime j} (j=1,2,3,4)$ denotes vertices
$u_1, u_2, u_3, u_4$ belong to exactly $j$ components. $\mathcal{H}_i^j$
can be defined similarly.

For an arbitrary $H^\prime\in\mathcal{H}_i^\prime$,
assume $u_1$ is in a component $R^\prime$ of $H^\prime$.
Assume $(N_G(u_3)\setminus\{u_2,u_4\})\cap N_{R^\prime}(u_1)=\{u_3^{i_1},\cdots, u_3^{i_r}\}$,
$N_G(u_4)\cap N_{R^\prime}(u_1)=\{u_4^{i_1},\cdots, u_4^{i_s}\}$,
where $0\leq r\leq \mbox{min} \{d_G(u_3)-2, |V(R^\prime)|-1\},
0\leq s\leq \mbox{min} \{d_G(u_4)-2, |V(R^\prime)|-1\}$.
Define $H$ with $V(H)=V(H^\prime)$, $E(H)=E(H^\prime)-\{u_1u_3^{i_1}-\cdots -u_1u_3^{i_r}-
u_1u_4^{i_1}-\cdots -u_1u_4^{i_s}+u_3u_3^{i_1}+\cdots +u_3u_3^{i_r}+
u_4u_4^{i_1}+\cdots +u_4u_4^{i_s}\}$.
Then $H\in \mathcal{H}_i$, let $f: \mathcal{H}_i^\prime\rightarrow \mathcal{H}_i$, and
$\mathcal{H}_i^*=f(\mathcal{H}_i^\prime)=\{f(H^\prime)|H^\prime\in \mathcal{H}_i^\prime\}$.
It is easy to see that $H^\prime\in\mathcal{H}_i^{\prime j}\Leftrightarrow H\in \mathcal{H}_i^j,
j=1,2,3,4$.

Let $A+1$ be the order of the subgraph of $H^\prime-u_1u_2-u_1u_4$
which contains $u_1$ and excluding
the vertices in $N_G(u_3)\cup N_G(u_4)$
and $B+1$ be the order of the subgraph of $H^\prime-u_1u_2-u_2u_3$
which contains $u_2$,
and denote $C=|N_{R^\prime}(u_1)\cap N_G(u_4)|,
D=|N_{R^\prime}(u_1)\cap (N_G(u_3)\setminus\{u_2,u_4\})|$,
$(A,B,C,D\geq 0, \mbox{ and } A,B,C,D \mbox{ is fixed})$.
Denote $N$ the product of
all the orders of components of $H^\prime$ except the components containing
$u_1, u_2, u_3, u_4$.

If we include vertices $u_1, u_3, u_4$ in a component of $H^\prime$,
we have equal sizes of components in $H^\prime$ and $H$, respectively.
Then $W(H^\prime)=W(H)$.

Assume vertices $u_1, u_3, u_4$ belong to at least two components.
we distinguish the proof into three cases.

Case 1: $H^\prime\in \mathcal{H}_i^{\prime 2}$.

Subcase 1.1: $u_1u_2, u_3u_4\in H^\prime$, $u_2u_3, u_1u_4\not\in H^\prime$,
then $$W(H)-W(H^\prime)=[(A+B+2)(C+D+2)-2(A+B+C+D+2)]\cdot N.$$

Subcase 1.2: $u_1u_2, u_3u_4\not\in H^\prime$, $u_2u_3, u_1u_4\in H^\prime$,
then $$W(H)-W(H^\prime)=[(A+D+2)(B+C+2)-(B+2)(A+C+D+2)]\cdot N.$$

Subcase 1.3: $u_1u_2, u_2u_3\in H^\prime$, $u_3u_4, u_1u_4\not\in H^\prime$,
then $$W(H)-W(H^\prime)=[(A+B+C+3)(D+1)-(A+B+C+D+3)]\cdot N.$$

Subcase 1.4: $u_1u_2, u_1u_4\in H^\prime$, $u_3u_4, u_2u_3\not\in H^\prime$,
then $$W(H)-W(H^\prime)=[(A+B+D+3)(C+1)-(A+B+C+D+3)]\cdot N.$$

Subcase 1.5: $u_2u_3, u_3u_4\in H^\prime$, $u_1u_2, u_1u_4\not\in H^\prime$,
then $$W(H)-W(H^\prime)=[(B+C+D+3)(A+1)-(A+C+D+1)(B+3)]\cdot N.$$

We can find a bijection between every two subsets of the above subcases.
Hence $$\sum_{H\in \mathcal{H}_i^2} W(H)- \sum_{H^\prime\in \mathcal{H}_i^{\prime 2}} W(H^\prime)\geq 0.$$

Case 2: $H^\prime\in \mathcal{H}_i^{\prime 3}$.

Subcase 2.1: $u_1u_2\in H^\prime$, $u_2u_3, u_3u_4, u_1u_4\not\in H^\prime$,
then $$W(H)-W(H^\prime)=[(A+B+2)(C+1)(D+1)-(A+B+C+D+2)]\cdot N.$$

Subcase 2.2: $u_2u_3\in H^\prime$, $u_1u_2, u_3u_4, u_1u_4\not\in H^\prime$,
then $$W(H)-W(H^\prime)=[(A+1)(D+1)(B+C+2)-(B+2)(A+C+D+1)]\cdot N.$$

Subcase 2.3: $u_3u_4 \in H^\prime$, $ u_1u_2, u_2u_3, u_1u_4\not\in H^\prime$,
then $$W(H)-W(H^\prime)=[(A+1)(B+1)(C+D+2)-2(B+1)(A+C+D+1)]\cdot N.$$

Subcase 2.4: $u_1u_4\in H^\prime$, $u_1u_2, u_2u_3, u_3u_4\not\in H^\prime$,
then $$W(H)-W(H^\prime)=[(A+D+2)(B+1)(C+1)-(A+C+D+2)(B+1)]\cdot N.$$

We can find a bijection between every two subsets of the above subcases.
Hence $$\sum_{H\in \mathcal{H}_i^3} W(H)- \sum_{H^\prime\in \mathcal{H}_i^{\prime 3}} W(H^\prime)\geq 0.$$

Case 3: $H^\prime\in \mathcal{H}_i^{\prime 4}$,
$u_1u_2, u_2u_3, u_3u_4, u_1u_4\not\in H^\prime$,
then $W(H)-W(H^\prime)=[(A+1)(B+1)(C+1)(D+1)-(A+C+D+1)(B+1)]\cdot N
=(B+1)(ACD+AC+CD+AD)\cdot N.$ Thus
$$\sum_{H\in \mathcal{H}_i^4} W(H)- \sum_{H^\prime\in \mathcal{H}_i^{\prime 4}} W(H^\prime)\geq 0.$$

Combining the above cases and
$\sum_{H\in \mathcal{H}_i^1} W(H)= \sum_{H^\prime\in \mathcal{H}_i^{\prime 1}} W(H^\prime)$,
from Theorem~\ref{theorem1.2} and $f$ is an injection on the whole.
Then $$\varphi_i(G^\prime)=\sum_{H^\prime\in\mathcal{H}_i^\prime} W(H^\prime)
<\sum_{H\in\mathcal{H}_i^*} W(H)\leq\sum_{H\in\mathcal{H}_i} W(H)=\varphi_i(G)$$ holds
for $i=2,3,\cdots,n-1$.

\end{proof}

Similar to the proof of Lemma~\ref{lemma4.3}, we have the following Lemma.

\begin{lemma}\label{lemma4.4}
(1). Let $B_6(a,b,0,0,0,0)$ be the graph defined above, if we move all pendent
edges from vertices $u_2$ to vertex $u_1$. If $b\neq 0$,
then $$\varphi_i(B_6(a,b,0,0,0,0))\geq \varphi_i(B_6(a+b,0,0,0,0,0)), i=0,1,\cdots,n,$$
with equality holds if and only if $i\in\{0,1,n-1,n\}$.

(2). Let $B_7(a,b,c,d,e)$ be the graph defined above, if we move all pendent
edges from vertices $u_1$ to vertex $u$. If $c\neq 0$,
then $$\varphi_i(B_7(a,b,c,d,e))\geq \varphi_i(B_7(a+c,b,0,d,e)), i=0,1,\cdots,n,$$
with equality holds if and only if $i\in\{0,1,n-1,n\}$.

(3). Let $B_7(a,b,0,0,0)$ be the graph defined above, if we move all pendent
edges from vertices $v$ to vertex $u$. If $b\neq 0$,
then $$\varphi_i(B_7(a,b,0,0,0))\geq \varphi_i(B_7(a+b,0,0,0,0)), i=0,1,\cdots,n,$$
with equality holds if and only if $i\in\{0,1,n-1,n\}$.

(4). Let $B_8(a,b,c,d,e,f,g)$ be the graph defined above, if we move all pendent
edges from vertices $u_2,u_3,u_4$ to vertex $u_1$. If $b+c+d\neq 0$,
then $$\varphi_i(B_8(a,b,c,d,e,f,g))\geq \varphi_i(B_8(a+b+c+d,0,0,0,e,f,g)), i=0,1,\cdots,n,$$
with equality holds if and only if $i\in\{0,1,n-1,n\}$.

(5). Let $B_8(a,0,0,0,e,f,g)$ be the graph defined above, if we move all pendent
edges from vertices $v_2,v_3,v_4$ to vertex $u_1$. If $e+f+g\neq 0$,
then $$\varphi_i(B_8(a,0,0,0,e,f,g))\geq \varphi_i(B_8(a+e+f+g,0,0,0,0,0,0)), i=0,1,\cdots,n,$$
with equality holds if and only if $i\in\{0,1,n-1,n\}$.
\end{lemma}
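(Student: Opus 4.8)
The plan is to prove all five parts by the single template already used in Lemma~\ref{lemma4.3} (and Lemma~\ref{lemma4.1}), and the decisive simplification I would record first is that each underlying base is bipartite: all cycles of $B_6=B(P_3,P_3,P_1)$ are even (lengths $4$ and $6$), those of $B_7=B(P_2,P_2,P_2)=K_{2,3}$ have length $4$, and those of $B_8=B(4,4)$ have length $4$. Consequently every graph appearing in the five statements is bipartite, so by Theorem~\ref{theorem1.2} no TU-subgraph can contain an odd unicyclic component, and the weight of any TU-subgraph is simply the product of the orders of its tree components. The argument therefore stays entirely inside the forest-weight regime of Lemma~\ref{lemma4.3}, with none of the odd-unicyclic cases that arise, e.g.\ in the proof of Lemma~\ref{lemma4.1}.

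I would first dispose of the equality range. Each of the five maps only relocates pendent edges, so it changes neither $|E(G)|$ nor the two cycle lengths nor $|E(C_1\cap C_2)|$; arguing as in the proof of Theorem~\ref{theorem3.2} gives $\varphi_i(G)=\varphi_i(G')$ for $i\in\{0,1,n-1,n\}$, the value $\varphi_{n-1}$ coinciding because the spanning-tree count formula for bipartite graphs depends only on the cycle lengths and their intersection.

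For the main range $2\le i\le n-2$, I would set up, for each part, the sets $\mathcal{H}_i'$ and $\mathcal{H}_i$ of $i$-edge TU-subgraphs of $G'$ and $G$, together with the injection $f:\mathcal{H}_i'\to\mathcal{H}_i$ that undoes the move: in a TU-subgraph $H'$ each edge joining the target hub to a relocated neighbor is replaced by the corresponding edge at its original vertex. I would then partition $\mathcal{H}_i'$ by the number of tree-components containing the vertices involved in the move and the incident cycle vertices---the two degree-$3$ vertices $u_1,u_2$ for part~(1); the hub $u$ and the path-vertex $u_1$ for part~(2); the two hubs $u,v$ for part~(3); and the four $C_4$-vertices $u_1,u_2,u_3,u_4$ (resp.\ $u_1,v_2,v_3,v_4$) for parts~(4) and~(5)---mirroring the partition into $\mathcal{H}_i^{\prime j}$ in Lemma~\ref{lemma4.3}. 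In each resulting subcase $W(H)$ and $W(H')$ are products of positive integers, and the inequality $W(H)\ge W(H')$ reduces to the elementary fact that splitting a set and multiplying the part-sizes dominates the total, e.g.\ $(A+1)(B+1)\ge A+B+1$ and $(A+1)(B+1)(C+1)\ge A+B+C+1$ for nonnegative integers. Summing over the partition and using injectivity of $f$ then yields
$$\varphi_i(G')=\sum_{H'\in\mathcal{H}_i'}W(H')\le\sum_{H\in f(\mathcal{H}_i')}W(H)\le\sum_{H\in\mathcal{H}_i}W(H)=\varphi_i(G),$$
with the inequality strict once some split is nontrivial, which happens throughout $2\le i\le n-2$.

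The only real labor---and the step I expect to be the main obstacle---is the case bookkeeping for parts~(4) and~(5), where three cycle vertices carry pendents that are transferred simultaneously to the hub $u_1$; there one must enumerate how the four vertices of a $C_4$ distribute among two, three, or four tree-components and verify the product inequality in each, just as in the subcases of Case~1 of Lemma~\ref{lemma4.3}. Since every cycle is even, this enumeration never meets an odd unicyclic component, so no inequality beyond the ``product dominates sum'' type is ever needed, and each part closes exactly as Lemma~\ref{lemma4.3}.
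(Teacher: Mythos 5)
Your setup is the right one and in fact matches the paper's: the paper proves this lemma by exactly the same TU-subgraph injection as Lemma~\ref{lemma4.3}, and your observation that every graph occurring in the five parts is bipartite (so every TU-subgraph is a spanning forest and no odd-unicyclic cases arise) is correct and is implicitly what makes the Lemma~\ref{lemma4.3} template applicable. The gap is your central claim that in every subcase $W(H)\ge W(H')$, so that ``no inequality beyond the product-dominates-sum type is ever needed.'' That is false, both for the template you are copying and for the five parts here. Concretely, for part (4) take $G'=B_8(a+b+c+d,0,0,0,e,f,g)$ with $c\ge 1$, and let $H'$ consist of the two cycle edges $u_2u_3,u_3u_4$ together with $k\ge 1$ pendant edges at $u_1$ whose pendants originally belonged to $u_3$ (all other vertices isolated). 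Then $W(H')=3(1+k)\cdot N$, while the image $H=f(H')$ has those $k$ pendants re-attached to $u_3$, giving $W(H)=(3+k)\cdot N$; hence $W(H)-W(H')=-2kN<0$. The same phenomenon occurs in part (1), where the difference is $k(t_1-t_2)N$ and is negative whenever the component meeting $u_1$ is smaller than the component of $u_2$, and it is visible in the paper's own Lemma~\ref{lemma4.3}: Subcase 1.2 there gives $W(H)-W(H')=[(A+D+2)(B+C+2)-(B+2)(A+C+D+2)]\cdot N=C(A+D-B)\cdot N$, which is negative whenever $B>A+D$ and $C\ge 1$.

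Because individual terms can be negative, the inequality $\sum_{H'}W(f(H'))\ge\sum_{H'}W(H')$ cannot be established subcase-by-subcase; one must group the subcases and show that the paired sums are nonnegative. For instance, the $-2kN$ above is cancelled exactly by the $+2kN$ coming from the companion configuration in which the antipodal pair of cycle edges $u_1u_2,u_1u_4$ lies in $H'$ instead, and in parts (1) and (3) the pairing uses the automorphism of the base exchanging the two branch vertices ($u_1\leftrightarrow u_2$ for $B_6$, resp.\ $u\leftrightarrow v$ for $B_7=K_{2,3}$). This cross-subcase bijection is precisely what the proof of Lemma~\ref{lemma4.3} invokes (``we can find a bijection between every two subsets of the above subcases''), and it is the actual content of the argument rather than bookkeeping: without it the key inequality in your proposal fails at exactly the step you dismiss as elementary.
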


For any graph $G$ and $v\in V(G)$, let $Q_{G|v}(x)$
be the principal submatrix of $Q_G(x)$ obtained by deleting the row
and column corresponding to the vertex $v$. Similar to the proof of
Theorem~\ref{theorem2.2} and Theorem~\ref{theorem2.3}, we can prove the
following two lemmas.

\begin{lemma}\label{lemma4.5}
If $G=G_1|u : G_2|v$, then
$Q_G(x)=Q_{G_1}(x)Q_{G_2}(x)-Q_{G_1}(x)Q_{G_2|v}(x)-Q_{G_2}(x)Q_{G_1|u}(x)$.
\end{lemma}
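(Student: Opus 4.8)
The plan is to mimic the proof of Theorem~\ref{theorem2.2}, exploiting the fact that in $G=G_1|u:G_2|v$ the two pieces $G_1$ and $G_2$ are joined only by the single edge $uv$. Writing $A_1=xI-Q(G_1)$ and $A_2=xI-Q(G_2)$ and ordering the vertices of $G$ so that those of $G_1$ come first, I would first record exactly how $xI-Q(G)$ differs from the block-diagonal matrix $D=\mathrm{diag}(A_1,A_2)$. The new edge raises the degrees of $u$ and $v$ each by one, which subtracts $1$ from the $(u,u)$ and $(v,v)$ diagonal entries, and it contributes the off-diagonal adjacency entries $-1$ in positions $(u,v)$ and $(v,u)$ (recall $Q=D+A$, so $xI-Q$ carries $-A$). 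Letting $w$ be the column vector of $G$ having a $1$ in the coordinate of $u$ and a $1$ in the coordinate of $v$ and zeros elsewhere, these four changes assemble precisely into one symmetric rank-one term, giving the identity $xI-Q(G)=D-ww^{\mathsf T}$.

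Then I compute $\det(xI-Q(G))=\det(D-ww^{\mathsf T})$ via the rank-one perturbation (adjugate) formula $\det(D-ww^{\mathsf T})=\det D-w^{\mathsf T}\,\mathrm{adj}(D)\,w$, which is valid as a polynomial identity and so needs no invertibility hypothesis on $A_1,A_2$. Here $\det D=\det A_1\det A_2=Q_{G_1}(x)Q_{G_2}(x)$. For the correction term, since $w$ is supported on the two coordinates $u$ and $v$ only, $w^{\mathsf T}\,\mathrm{adj}(D)\,w$ collapses to $(\mathrm{adj}\,D)_{uu}+(\mathrm{adj}\,D)_{vv}+(\mathrm{adj}\,D)_{uv}+(\mathrm{adj}\,D)_{vu}$. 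Because $D$ is block diagonal with $u$ and $v$ lying in different blocks, the two cross cofactors $(\mathrm{adj}\,D)_{uv}$ and $(\mathrm{adj}\,D)_{vu}$ vanish, while the diagonal cofactors factor as $(\mathrm{adj}\,D)_{uu}=Q_{G_1|u}(x)Q_{G_2}(x)$ and $(\mathrm{adj}\,D)_{vv}=Q_{G_1}(x)Q_{G_2|v}(x)$. Substituting yields exactly $Q_G(x)=Q_{G_1}(x)Q_{G_2}(x)-Q_{G_1}(x)Q_{G_2|v}(x)-Q_{G_2}(x)Q_{G_1|u}(x)$.

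The one genuinely delicate point, and the reason the statement is worth isolating, is the bookkeeping of signs in the first step: for the ordinary Laplacian the off-diagonal coupling is $+1$ whereas for the signless Laplacian it is $-1$. One must check that in both cases the diagonal degree correction together with the off-diagonal coupling combine into a genuine rank-one matrix $ww^{\mathsf T}$ (with $w$ differing only by a sign in the $v$-block between the two settings), and that this sign is invisible to $w^{\mathsf T}\,\mathrm{adj}(D)\,w$ since the surviving cofactors are the sign-independent diagonal ones. This is precisely what makes the signless formula come out identical in form to Guo's Laplacian identity, so beyond this verification the argument runs parallel to Theorem~\ref{theorem2.2}.
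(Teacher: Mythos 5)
Your proof is correct, and I could verify every step: with $w=e_u+e_v$ the perturbation caused by the bridge $uv$ is exactly $-ww^{\mathsf T}$ (the degree increments give the $-1$'s on the diagonal at $u$ and $v$, and $-A$ gives the $-1$'s off the diagonal), the identity $\det(D-ww^{\mathsf T})=\det D-w^{\mathsf T}\mathrm{adj}(D)\,w$ is legitimate here (indeed $\det D=Q_{G_1}(x)Q_{G_2}(x)$ is a nonzero polynomial, so one may even work over the field of rational functions), the cross cofactors vanish because deleting a row in one block and a column in the other leaves $n_1$ columns supported on only $n_1-1$ rows, and the diagonal cofactors factor as you claim. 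For comparison: the paper offers no argument at all for this lemma; it merely asserts that it can be proved "similar to the proof of Theorem~\ref{theorem2.2} and Theorem~\ref{theorem2.3}," i.e., by transplanting Guo's Laplacian computation. Your write-up is therefore the realization of what the paper leaves implicit, packaged through the rank-one update and adjugate formula rather than a row/column multilinearity expansion of the determinant. What your route buys is precisely the point your last paragraph isolates: it makes visible that the only difference between $L$ and $Q$ is the coupling vector ($e_u-e_v$ versus $e_u+e_v$), and that this sign is annihilated because only the diagonal cofactors survive in $w^{\mathsf T}\mathrm{adj}(D)\,w$ — which is the exact justification for the paper's word "similar," and would equally dispose of Lemma~\ref{lemma4.6} by the same mechanism.
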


\begin{lemma}\label{lemma4.6}
If $G$ be a connected graph with $n$ vertices which consists of a subgraph $H (|V(H)|\geq 2)$
and $n-|V(H)|$ pendent vertices attached to a vertex $v$ in $H$, then
$Q_G(x)=(x-1)^{(n-|V(H)|)}Q_{H}(x)-(n-|V(H)|)x(x-1)^{(n-|V(H)|-1)}Q_{H|v}(x)$.
\end{lemma}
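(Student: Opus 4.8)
The plan is to compute $Q_G(x)=\det(xI-Q(G))$ directly from the block structure that the pendent vertices impose on $Q(G)$, mirroring the proof of Theorem~\ref{theorem2.3}. Write $p=n-|V(H)|$ and let $w_1,\dots,w_p$ be the pendent vertices attached to $v$. Ordering the vertices so that $w_1,\dots,w_p$ come first and the vertices of $H$ come last, I would record that each $w_j$ has degree $1$ and is adjacent only to $v$, so that
$$xI-Q(G)=\begin{pmatrix}(x-1)I_p & -\mathbf{1}_p e_v^{\top}\\ -e_v\mathbf{1}_p^{\top} & M\end{pmatrix},$$
where $\mathbf{1}_p$ is the all-ones vector of length $p$, $e_v$ is the indicator of $v$ inside the $H$-block, and $M$ is the $H$-block of $xI-Q(G)$. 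The one point that needs care is that $M\neq xI-Q(H)$: because $d_G(v)=d_H(v)+p$, the diagonal entry of $Q(G)$ at $v$ exceeds that of $Q(H)$ by $p$, so $M=(xI-Q(H))-p\,e_ve_v^{\top}$, while all other entries of $M$ agree with $xI-Q(H)$ since the pendent vertices are attached only to $v$.

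Next I would eliminate the pendent block by a Schur complement with respect to the invertible block $(x-1)I_p$ (treating the identity as a rational identity in $x$, valid for $x\neq 1$ and then extended by polynomial continuity). Since $\mathbf{1}_p^{\top}\mathbf{1}_p=p$, the Schur complement is $M-\frac{p}{x-1}e_ve_v^{\top}$, and substituting $M=(xI-Q(H))-p\,e_ve_v^{\top}$ together with $1+\frac{1}{x-1}=\frac{x}{x-1}$ gives
$$Q_G(x)=(x-1)^p\det\!\Big((xI-Q(H))-\tfrac{px}{x-1}\,e_ve_v^{\top}\Big).$$
I would then finish with the matrix determinant (rank-one update) lemma: for $B=xI-Q(H)$ one has $\det(B-\alpha\,e_ve_v^{\top})=\det B-\alpha\,(\operatorname{adj}B)_{vv}$, where $(\operatorname{adj}B)_{vv}$ is the principal minor of $B$ obtained by deleting row and column $v$. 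By definition these two quantities are exactly $Q_H(x)$ and $Q_{H|v}(x)$, so with $\alpha=\frac{px}{x-1}$ the displayed determinant equals $Q_H(x)-\frac{px}{x-1}Q_{H|v}(x)$, and multiplying by $(x-1)^p$ yields $(x-1)^pQ_H(x)-px(x-1)^{p-1}Q_{H|v}(x)$, which is the claim.

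The main obstacle, and the only genuinely nontrivial bookkeeping, is the diagonal shift at $v$: the degree increase $d_G(v)-d_H(v)=p$ contributes the extra term $-p\,e_ve_v^{\top}$ in $M$, and it is the combination of this term with the $-\frac{p}{x-1}e_ve_v^{\top}$ coming from the Schur complement that produces the coefficient $\frac{px}{x-1}$ rather than $\frac{p}{x-1}$; this is precisely what forces the factor $x$ (in place of $x-1$) in the second summand and makes the signless-Laplacian formula coincide in shape with the Laplacian one of Theorem~\ref{theorem2.3}. As an alternative I could argue by induction on $p$ using Lemma~\ref{lemma4.5}: removing one pendent vertex writes $G=G^-|v:K_1|w$ with $Q_{K_1}(x)=x$ and $Q_{K_1|w}(x)=1$, so $Q_G(x)=(x-1)Q_{G^-}(x)-xQ_{G^-|v}(x)$, and combining the recursion with the observation that deleting $v$ isolates the remaining $p-1$ pendent vertices (whence $Q_{G^-|v}(x)=(x-1)^{p-1}Q_{H|v}(x)$) reproduces the same identity.
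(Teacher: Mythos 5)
Your proof is correct, and it is worth noting that it is more self-contained than what the paper actually provides: the paper gives no argument for Lemma~\ref{lemma4.6} at all, merely asserting that it can be proved ``similar to'' Theorems~\ref{theorem2.2} and~\ref{theorem2.3}, whose Laplacian proofs (in the cited works of Guo and He--Shao--He) proceed by the vertex-joining recursion. That recursion is exactly the \emph{alternative} you sketch at the end: write $G=G^-|v:K_1|w$, apply Lemma~\ref{lemma4.5} with $Q_{K_1}(x)=x$ and $Q_{K_1|w}(x)=1$ to get $Q_G(x)=(x-1)Q_{G^-}(x)-xQ_{G^-|v}(x)$, observe $Q_{G^-|v}(x)=(x-1)^{p-1}Q_{H|v}(x)$ since deleting $v$ isolates the remaining pendent vertices (while their diagonal entries stay $x-1$), and induct on $p$; so your fallback coincides with the paper's intended route. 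Your primary argument --- the block form of $xI-Q(G)$, the Schur complement over $(x-1)I_p$, and the rank-one update $\det(B-\alpha e_ve_v^{\top})=\det B-\alpha(\operatorname{adj}B)_{vv}$ --- is a genuinely different, purely computational proof that does not invoke Lemma~\ref{lemma4.5}, and its payoff is that it isolates the one point the ``by analogy'' claim hides: the diagonal shift $M=(xI-Q(H))-p\,e_ve_v^{\top}$ coming from $d_G(v)=d_H(v)+p$, whose combination with the Schur term $-\frac{p}{x-1}e_ve_v^{\top}$ yields the coefficient $\frac{px}{x-1}$ and hence the factor $x$ (rather than $x-1$) in the second summand; this also explains transparently why the signless Laplacian formula has the same shape as the Laplacian one, since a pendent edge perturbs the diagonal identically in $D+A$ and $D-A$. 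All the individual steps (the block matrix entries, $\mathbf{1}_p^{\top}\mathbf{1}_p=p$, the rational-identity-then-polynomial-continuity argument for $x\neq1$, and the identification of $\det B$ and $(\operatorname{adj}B)_{vv}$ with $Q_H(x)$ and $Q_{H|v}(x)$) are applied correctly.
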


Let $f(x)=\sum_{i=0}^n (-1)^ia_ix^{n-i}, g(x)=\sum_{j=0}^m (-1)^ja_jx^{m-j}, a_i>0, b_j>0$.
Then it is easy to see $f(x)g(x)=f(x)=\sum_{k=0}^{m+n} (-1)^k \sum_{i=0}^k a_ib_{k-i}x^{n+m-k}$
has coefficients alternate with positive and negative.

By using Lemma~\ref{lemma4.5} and Lemma~\ref{lemma4.6}, we can
compute the signless Laplacian polynomials of seven special $n$-vertex bicyclic graphs.
For convenience, write $Q_G(x)$ as $Q(G, x)$.

$Q(B_1(n-5,0,0,0,0),x)=(x-1)^{n-4}(x-3)[x^3-(n+3)x^2+3nx-8],$

$Q(B_2(n-6,0,0,0,0,0),x)=(x-1)^{n-6}(x-2)[x^5-(n+6)x^4+7(n+1)x^3-2(7n-1)x^2+2(3n+8)x-8],$

$Q(B_3(n-4,0,0,0),x)=(x-1)^{n-4}(x-2)[x^3-(n+4)x^2+4nx-8],$

$Q(B_4(n-5,0,0,0,0),x)=(x-1)^{n-6}[x^6-(n+8)x^5+9(n+2)x^4-(27n+10)x^3+(31n+10)x^2-(11n+32)x+16],$

$Q(B_5(n-6,0,0,0,0,0),x)=(x-1)^{n-7}(x-2)[x^6-(n+7)x^5+(9n+8)x^4-(26n-22)x^3+(27n-30)x^2-(8n+8)x+8],$

$Q(B_6(n-6,0,0,0,0,0),x)=x(x-1)^{n-6}(x-3)[x^4-(n+5)x^3+(7n-1)x^2-(13n-17)x+5n],$

$Q(B_7(n-5,0,0,0,0),x)=x(x-1)^{n-6}(x-2)^2[x^3-(n+4)x^2+(5n-2)x-3n],$

$Q(B_8(n-7,0,0,0,0,0,0),x)=x(x-1)^{n-8}(x-2)^2(x^2-4x+2)[x^3-(n+2)x^2+2(2n-3)x-2n].$

Then we have
\begin{eqnarray}
&Q(B_1(n-5,0,0,0,0),x)-Q(B_3(n-4,0,0,0),x)\nonumber\\
&=(x-1)^{n-4}(x^2-n x+8).
\end{eqnarray}
\begin{eqnarray}
&Q(B_2(n-6,0,0,0,0,0),x)-Q(B_4(n-5,0,0,0,0),x)\nonumber\\
&=x(x-1)^{n-6}[x^3-(n+2)x^2+(3n+2)x-(n+8)].
\end{eqnarray}
\begin{eqnarray}
&Q(B_4(n-5,0,0,0,0),x)-Q(B_3(n-4,0,0,0),x)\nonumber\\
&=x(x-1)^{n-6}[(n-3)x^3-(6n-20)x^2+(9n-30)x-(3n-8)].
\end{eqnarray}
\begin{eqnarray}
&Q(B_5(n-6,0,0,0,0,0),x)-Q(B_3(n-4,0,0,0),x)\nonumber\\
&=x(x-2)(x-1)^{n-7}[(2n-7)x^3-(11n-43)x^2+(14n-58)x-(4n-16)].
\end{eqnarray}
Thus $B_3(n-4,0,0,0)$ has minimum signless Laplacian coefficients in the set
$\{B_1(n-5,0,0,0,0), B_2(n-6,0,0,0,0,0), B_3(n-4,0,0,0), B_4(n-5,0,0,0,0)\}$.

Moreover,
\begin{eqnarray}
&Q(B_8(n-7,0,0,0,0,0,0),x)-Q(B_6(n-6,0,0,0,0,0),x)\nonumber\\
&=x(x-1)^{n-8}[x^5-(n+4)x^4+(6n+1)x^3-(11n-6)x^2+3(2n+1)x-n].
\end{eqnarray}
\begin{eqnarray}
&Q(B_6(n-6,0,0,0,0,0),x)-Q(B_7(n-5,0,0,0,0),x)\nonumber\\
&=x(x-1)^{n-6}[(n-4)x^3-(7n-28)x^2+(12n-43)x-3n].
\end{eqnarray}
Thus $B_7(n-5,0,0,0,0)$ has minimum signless Laplacian coefficients in the set
$\{B_6(n-6,0,0,0,0,0), B_7(n-5,0,0,0,0), B_8(n-7,0,0,0,0,0,0)\}$.

For convenience, write $B_3(n-4,0,0,0)$ as $B_n^1$, $B_7(n-5,0,0,0,0)$ as $B_n^2$.

\section{The graphs which have minimum signless Laplacian coefficients in $\mathcal{B}^1(n)$ and $\mathcal{B}^2(n)$}
\begin{theorem}\label{theorem5.1}
In the set $\mathcal{B}^1(n)$, for $G\in\mathcal{B}^1(n)$, $G\not\cong B_n^1$,
$\varphi_i(G)\geq \varphi_i(B_n^1), i=0,1,\cdots,n$.
With equality if and only if either $i\in\{0,1,n-1,n\}$ when
$\overline{G}\cong B(P_2,P_2,P_1)$ or $i\in\{0,1\}$ otherwise.
\end{theorem}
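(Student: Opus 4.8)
The plan is to reduce an arbitrary $G\in\mathcal{B}^1(n)$, through a sequence of transformations each of which does not increase any coefficient, to one of finitely many \textbf{normal forms} in which every pendent vertex sits at a single vertex and both cycles have minimal length, and then to compare these finitely many candidates directly using the polynomial identities (1)--(4). Since each transformation of Sections 3 and 4 satisfies $\varphi_i(G)\geq\varphi_i(G')$ for all $i$, it suffices to show that the normal form reachable from $G$ already has all coefficients bounded below by those of $B_n^1=B_3(n-4,0,0,0)$.

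First I would fix the base type. By the Remark following Theorem~\ref{theorem3.2}, repeated use of the transformation of Definition~\ref{definition3.1} carries any graph of $\mathcal{B}_2(n)$ into $\mathcal{B}_1(n)$ without increasing any $\varphi_i$, so I may assume $\overline{G}$ is of type $B(p,q)$ or $B(P_k,P_l,P_m)$. Next I would shrink the cycles: Theorem~\ref{theorem3.4} applied to a cycle of length $\geq 5$ lowers its length by two while preserving its parity (and, in the theta case, the parity of the partner cycle), the hypotheses (1)--(3) prescribing which admissible consecutive triple $u,v,w$ may be used. Because $G\in\mathcal{B}^1(n)$ retains an odd cycle throughout, this terminates at a base having an odd cycle of length $3$ and a partner cycle of length $3$ or $4$; the reachable bases are $B(3,3)=B_1$ and $B(3,4)=B_2$ from $\mathcal{B}_1(n)$, and $B_3=B(P_2,P_2,P_1)$, $B_4=B(P_3,P_2,P_1)$, $B_5=B(P_3,P_2,P_2)$ from $\mathcal{B}_3(n)$. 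Finally, Lemmas~\ref{lemma4.1} and~\ref{lemma4.2} push every pendent vertex onto the distinguished high-degree vertex of each base, so that $G$ is dominated coefficientwise by one of $B_1(n-5,0,0,0,0)$, $B_2(n-6,0,\dots,0)$, $B_3(n-4,0,0,0)$, $B_4(n-5,0,0,0,0)$ or $B_5(n-6,0,\dots,0)$.

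It then remains to compare these five graphs, where I would invoke the computed signless Laplacian polynomials together with the elementary fact, recorded just before the identities, that a product of polynomials with sign-alternating coefficients again has sign-alternating coefficients. Reading off the coefficient of $x^{n-i}$, which equals $(-1)^i\varphi_i$, the identities (1), (3) and (4) exhibit $Q(B_1,x)-Q(B_3,x)$, $Q(B_4,x)-Q(B_3,x)$ and $Q(B_5,x)-Q(B_3,x)$ as products of powers of $(x-1)$, the factors $x$ and $(x-2)$, and a polynomial whose coefficients, for $n$ in the relevant range, alternate in sign; hence each difference has sign-alternating coefficients, which is exactly $\varphi_i(B_1),\varphi_i(B_4),\varphi_i(B_5)\geq\varphi_i(B_3)$ for all $i$. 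Chaining (2) with (3) gives $\varphi_i(B_2)\geq\varphi_i(B_4)\geq\varphi_i(B_3)$. Thus $B_3(n-4,0,0,0)=B_n^1$ minimises all coefficients among the five normal forms, and combining this with the reduction above yields $\varphi_i(G)\geq\varphi_i(B_n^1)$ for every $i$ and every $G\in\mathcal{B}^1(n)$.

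For the equality statement I would track where each link in the chain is tight. When $\overline{G}\cong B(P_2,P_2,P_1)$ only the pendent-concentrating moves of Lemma~\ref{lemma4.2} are used, whose equality set is $\{0,1,n-1,n\}$; for any other base a genuine cycle-shrinking (Theorem~\ref{theorem3.4}) or one of the strict differences (1)--(4) intervenes, and since each difference polynomial has degree $n-2$ it can vanish only in degrees $x^{n}$ and $x^{n-1}$, so the coefficients agree exactly at $i\in\{0,1\}$. The main obstacle I anticipate is the reduction step for the theta bases: one must check that the hypotheses (1)--(3) of Theorem~\ref{theorem3.4} can genuinely be realised by an admissible consecutive triple $u,v,w$ without ever shortening a cycle below length $3$ or creating parallel edges, and that the parity bookkeeping never annihilates the last odd cycle. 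This analysis of how the shrunk cycle meets its partner is the delicate part; the final polynomial comparison, by contrast, is a finite and essentially mechanical verification once the factorizations (1)--(4) are in hand.
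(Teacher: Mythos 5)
Your proposal follows essentially the same route as the paper's own proof: first eliminate non-pendent cut edges via the transformation of Definition~\ref{definition3.1} (Theorem~\ref{theorem3.2} and its Remark), then shrink cycles by two via Theorem~\ref{theorem3.4} until the five bases $B_1,\dots,B_5$ remain, then concentrate pendent edges at one vertex via Lemmas~\ref{lemma4.1} and~\ref{lemma4.2}, and finally compare the five extremal graphs through identities (1)--(4). The only additions are details the paper leaves implicit, namely the sign-alternation argument for reading $\varphi_i$-inequalities off the factorizations and the chaining of (2) with (3) to handle $B_2$, so your argument matches the paper's.
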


\begin{proof}
Let $G$ be an arbitrary graph in $\mathcal{B}^1(n)$.
We need to prove after series of transformations, $G$ will
become to $B_n^1$ and $B_n^1$ has minimum signless Laplacian coefficients
in $\mathcal{B}^1(n)$.

Step 1: When there is a non-pendent edge $uv$ which is not on the cycle.
By performing the transformation of Definition~\ref{definition3.1} to $uv$,
we have $G_{uv}\in\mathcal{B}^1(n)$, and
$\varphi_i(G)>\varphi_i(G_{uv})$, $i=2,3,\cdots,n-1$ by Theorem~\ref{theorem3.2}.

After performing Step 1 consecutively, it is easy to see
that all cut edges are pendent edges in the resulting graph.

Step 2: Since $G$ is a bicyclic graph, there are two minimal
cycles $C_1, C_2$ in $G$.
For $u, v, w\in V(C_1)$ and satisfy the claim of Definition~\ref{definition3.3}, if
$|V(C_1)|\geq 5$, and $|V(C_2)\cap V(C_1)|\geq 3$, by the
assumption $|V(C_2)\cap V(C_1)|=\mbox{min }\{k,l,m\}$,
we have $|V(C_2)|\geq 5$. Then we perform the transformation
of Definition~\ref{definition3.3} to $u, v, w\in V(C_1)\cap V(C_2)$, and obtain
$G^\prime$, by Theorem~\ref{theorem3.4},
$\varphi_i(G)> \varphi_i(G^\prime), i=2, 3,\cdots, n$,
and the lengths of the cycles of $G$
are decreased by $2$.

If $|V(C_1)|\geq 5$, and $|V(C_2)\cap V(C_1)|\leq 3$.
Assume $u, v, w$ satisfy the claim of Definition~\ref{definition3.3}
and at most one of $\{u,v,w\}$ belongs to $V(C_1)\cap V(C_2)$,
then we perform the transformation
of Definition~\ref{definition3.3} to $u, v, w$, and obtain
$G^\prime$, by Theorem~\ref{theorem3.4},
$\varphi_i(G)> \varphi_i(G^\prime), i=2, 3,\cdots, n$,
and the length of $C_1$ is decreased by $2$, while
the length of $C_2$ keeps unchanged.

Therefore, after taking Step 2 consecutively, we obtain five types of
graphs which has been discussed from Lemma~\ref{lemma4.1} and Lemma~\ref{lemma4.2},
then by comparing the signless Laplacian polynomials of the resulting five
extremal graphs, it is easy to see that $B_n^1$ has
minimum signless Laplacian coefficients in $\mathcal{B}^1(n)$ by equations (1)-(4).
\end{proof}

By Theorem~\ref{theorem1.3} and Theorem~\ref{theorem5.1},
we obtained the following corollary.

\begin{corollary}\label{corollary5.2}
In the set of all n-vertex bicyclic graphs in $\mathcal{B}^1(n)$,
$B_n^1$ is the unique graph with the minimal $IE$.
\end{corollary}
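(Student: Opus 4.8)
The plan is to take an arbitrary $G\in\mathcal{B}^1(n)$ and drive it, through a finite sequence of the coefficient-monotone transformations built in Sections~3 and~4, down to one of only five ``fully concentrated'' extremal graphs, and then to isolate the minimizer among those survivors by reading off the explicit polynomial differences (1)--(4). Every transformation is chosen so that it never increases any $\varphi_i$ and never leaves $\mathcal{B}^1(n)$, so the final comparison yields the global bound.

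First I would eliminate all non-pendant cut edges. Whenever $G$ contains a non-pendant edge $uv$ that does not lie on either cycle, I apply the transformation of Definition~\ref{definition3.1}; by Theorem~\ref{theorem3.2} this strictly decreases $\varphi_i$ for $2\le i\le n-1$ and leaves the cycle lengths, hence the parity, untouched, so the graph stays in $\mathcal{B}^1(n)$. Iterating until no such edge remains forces every cut edge to be pendant and every pendant structure to be a single pendant vertex (a length-two pendant path would itself contain a contractible non-pendant edge). Thus the survivor is a base of type $B(p,q)$, $B(p,l,q)$, or $B(P_k,P_l,P_m)$ carrying only pendant vertices; moreover, by the Remark following Theorem~\ref{theorem3.2}, the connecting path of a $B(p,l,q)$ is contracted into a shared vertex, so I may assume the base lies in $\mathcal{B}_1(n)\cup\mathcal{B}_3(n)$.

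Next I would shrink the cycles. As long as a governing cycle $C_1$ satisfies $|V(C_1)|\ge 5$, I select consecutive vertices $u,v,w\in V(C_1)$ meeting the intersection constraints (1)--(3) of Theorem~\ref{theorem3.4}, which is possible because the placement of the second cycle relative to $C_1$ falls into exactly the three cases $|V(C_1)\cap V(C_2)|\le 2$, $=3$, or $\ge 4$. Applying Definition~\ref{definition3.3}, Theorem~\ref{theorem3.4} gives $\varphi_i(G)>\varphi_i(G')$ for $2\le i\le n$ while shortening a cycle by $2$; since parity is preserved the graph remains in $\mathcal{B}^1(n)$. After finitely many steps every cycle has length $3$ or $4$, and, because at least one cycle is odd, the only possible bases are the five listed in Lemmas~\ref{lemma4.1} and~\ref{lemma4.2}: $B_1=B(3,3)$, $B_2=B(3,4)$, $B_3=B(P_2,P_2,P_1)$, $B_4=B(P_3,P_2,P_1)$, and $B_5=B(P_3,P_2,P_2)$. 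Within each base I then invoke Lemmas~\ref{lemma4.1} and~\ref{lemma4.2} to push all pendant edges onto the single optimal vertex, reaching $B_1(n-5,0,0,0,0)$, $B_2(n-6,0,0,0,0,0)$, $B_3(n-4,0,0,0)$, $B_4(n-5,0,0,0,0)$, and $B_5(n-6,0,0,0,0,0)$.

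Finally I would compare these five candidates via the identities (1)--(4): each difference factors as a power of $(x-1)$ (times $x$ or $(x-2)$ in some cases) against a polynomial whose coefficients one checks to be sign-alternating, so coefficientwise $B_3(n-4,0,0,0)=B_n^1$ lies below the other four, giving $\varphi_i(G)\ge\varphi_i(B_n^1)$. For the equality statement I would trace where strictness arose: the pendant-moving lemmas preserve equality exactly at $i\in\{0,1,n-1,n\}$, whereas any use of Theorem~\ref{theorem3.4} forces strict inequality for all $2\le i\le n$; hence if $\overline{G}\cong B(P_2,P_2,P_1)$ no cycle shortening is needed and equality persists on $\{0,1,n-1,n\}$, while otherwise at least one genuine shortening step collapses the equality set to $\{0,1\}$. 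I expect the main obstacle to be the bookkeeping in the shortening step, namely verifying that a triple $u,v,w$ obeying (1)--(3) always exists regardless of how $C_1$ and $C_2$ overlap and that the process terminates in precisely those five bases without ever exiting $\mathcal{B}^1(n)$.
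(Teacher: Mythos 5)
Your proposal never actually proves the statement it is supposed to prove. Corollary~\ref{corollary5.2} is a statement about the incidence energy $IE$, but your entire argument terminates at the coefficient inequality $\varphi_i(G)\ge\varphi_i(B_n^1)$ together with its equality cases --- that is, you have reconstructed Theorem~\ref{theorem5.1}, and your reconstruction follows the paper's proof of that theorem essentially step by step (Definition~\ref{definition3.1} and Theorem~\ref{theorem3.2} to eliminate non-pendant cut edges, Theorem~\ref{theorem3.4} to shorten cycles, Lemmas~\ref{lemma4.1} and~\ref{lemma4.2} to concentrate pendant edges, and the polynomial differences (1)--(4) to compare the five surviving extremal graphs). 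Nowhere in the proposal does $IE$ appear, so as a proof of the corollary there is a genuine gap: the conclusion about incidence energy is never reached, and coefficient-wise domination is not, by itself, the assertion being made.

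The missing ingredient is Theorem~\ref{theorem1.3} (Mirzakhah--Kiani): if $\varphi_i(G)\le\varphi_i(G^\prime)$ for $1\le i\le n$, then $IE(G)\le IE(G^\prime)$, with strict inequality as soon as $\varphi_i(G)<\varphi_i(G^\prime)$ for some $i$. The paper's own proof of Corollary~\ref{corollary5.2} is exactly this one-line combination: Theorem~\ref{theorem5.1} gives, for every $G\in\mathcal{B}^1(n)$ with $G\not\cong B_n^1$, the inequalities $\varphi_i(G)\ge\varphi_i(B_n^1)$ with strictness for at least one $i$ (your own equality analysis shows the equality set is contained in $\{0,1,n-1,n\}$, so for instance $i=2$ is strict), and Theorem~\ref{theorem1.3} then converts this into $IE(G)>IE(B_n^1)$, which is simultaneously minimality and uniqueness. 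Your strictness bookkeeping in the final paragraph supplies precisely the hypothesis needed for the strict form of Theorem~\ref{theorem1.3}, so the repair is a single sentence; but without that sentence nothing about $IE$ has been established. Note also that re-deriving Theorem~\ref{theorem5.1} from scratch is wasted effort here: the corollary is intended to be deduced from the already-proved theorem, not to re-prove it.
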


\begin{theorem}\label{theorem5.3}
In the set $\mathcal{B}^2(n)$, for $G\in\mathcal{B}^2(n)$, $G\not\cong B_n^2$,
$\varphi_i(G)\geq \varphi_i(B_n^2), i=0,1,\cdots,n$.
With equality if and only if either $i\in\{0,1,n-1,n\}$ when
$\overline{G}\cong B(P_2,P_2,P_2)$ or $i\in\{0,1\}$ otherwise.
\end{theorem}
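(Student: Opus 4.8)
The strategy mirrors that of Theorem~\ref{theorem5.1}: starting from an arbitrary $G\in\mathcal{B}^2(n)$, I would apply the weight-decreasing transformations of Sections~3 and~4 until $G$ is reduced to one of the fully concentrated base graphs $B_6(n-6,0,0,0,0,0)$, $B_7(n-5,0,0,0,0)$ and $B_8(n-7,0,0,0,0,0,0)$, and then invoke the polynomial identities (5) and (6) to single out $B_n^2$. First I would run the analogue of Step~1: whenever there is a nonpendent edge $uv$ not lying on a cycle, apply the transformation of Definition~\ref{definition3.1}. By Theorem~\ref{theorem3.2} this strictly decreases $\varphi_i$ for $2\le i\le n-2$ and, crucially, does not alter the two cycle lengths, so the resulting graph remains in $\mathcal{B}^2(n)$. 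Iterating, every cut edge becomes pendent; and as noted in the remark after Theorem~\ref{theorem3.2}, any graph whose base is of type $B(p,l,q)$ is pushed into $\mathcal{B}_1(n)$, so it suffices to treat $\overline{G}=B(p,q)$ and $\overline{G}=B(P_k,P_l,P_m)$.

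Next I would run the analogue of Step~2: as long as some minimal cycle has length $\ge 5$, choose $u,v,w$ on it meeting hypotheses (1)--(3) of Theorem~\ref{theorem3.4} and apply Definition~\ref{definition3.3}. By Theorem~\ref{theorem3.4} this strictly decreases $\varphi_i$ for $2\le i\le n$, and by the remark following it each affected cycle length drops by exactly $2$, hence stays even. Thus both cycles can be driven down to length $4$. A parity bookkeeping then shows the only reduced bases with both minimal cycles even are $B(4,4)=B_8$ in the $\mathcal{B}_1$ case, and, in the $\mathcal{B}_3$ case, $B(P_2,P_2,P_2)=K_{2,3}=B_7$ (when $k,l,m$ are all even) or $B(P_3,P_3,P_1)=B_6$ (when $k,l,m$ are all odd), these being the two smallest theta graphs in which $k,l,m$ share a common parity.

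Finally I would concentrate all pendent edges at the vertex of largest degree using Lemma~\ref{lemma4.3} and Lemma~\ref{lemma4.4}, obtaining one of $B_6(n-6,0,0,0,0,0)$, $B_7(n-5,0,0,0,0)$, $B_8(n-7,0,0,0,0,0,0)$ with $\varphi_i$ not increased and equality only at $i\in\{0,1,n-1,n\}$ (these graphs being bipartite). Comparing the three via (5) and (6) shows $B_7(n-5,0,0,0,0)=B_n^2$ has the smallest coefficients. Tracking where each transformation is strict then yields the stated equality cases: equality holds throughout $\{0,1,n-1,n\}$ precisely when $\overline{G}\cong B(P_2,P_2,P_2)$, since then only pendent-concentration is needed, and only at $\{0,1\}$ otherwise, since some cycle-shortening or base-comparison step is strictly applied.

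The main obstacle is the enumeration in Step~2 for the theta-graph case. One must verify that, while respecting hypotheses (1)--(3) of Theorem~\ref{theorem3.4}, an admissible triple $u,v,w$ can always be selected so that the two even minimal cycles are shortened without creating an odd cycle, and that the reduction genuinely terminates at $B_6$ or $B_7$. The delicate boundary configurations are those in which the two minimal cycles share two or three vertices, where the choice of $u,v,w$ is constrained by conditions (2)--(3) and the shared part must be shortened symmetrically; showing that these always admit an admissible triple, and that no other minimal even theta base can survive, is the crux of the argument.
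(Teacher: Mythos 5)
Your proposal is correct and is essentially the paper's own argument: the paper proves Theorem~\ref{theorem5.3} by simply declaring it "similar to the proof of Theorem~\ref{theorem5.1}," i.e.\ the same two-step reduction (Definition~\ref{definition3.1} via Theorem~\ref{theorem3.2}, then Definition~\ref{definition3.3} via Theorem~\ref{theorem3.4}, which preserves parity since cycle lengths drop by $2$), followed by pendant concentration through Lemma~\ref{lemma4.3} and Lemma~\ref{lemma4.4} and the comparison of $B_6$, $B_7$, $B_8$ via equations (5) and (6). Your parity bookkeeping identifying $B_6$, $B_7$, $B_8$ as the only surviving even bases actually fills in detail the paper leaves implicit, and the obstacle you flag in the shared-path theta case is likewise glossed over by the paper itself.
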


The proof is similar to the proof of Theorem~\ref{theorem5.1}.

By Theorem~\ref{theorem1.3} and Theorem~\ref{theorem5.3},
we obtained the following corollary.

\begin{corollary}\label{corollary5.4}
In the set of all n-vertex bicyclic graphs in $\mathcal{B}^2(n)$,
$B_n^2$ is the unique graph with the minimal $IE$.
\end{corollary}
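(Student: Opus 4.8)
The plan is to run the two-step reduction of Theorem~\ref{theorem5.1}, now inside the bipartite family $\mathcal{B}^2(n)$, and to close with the polynomial identities (5)--(6). The organizing preliminary observation is that \emph{every} graph in $\mathcal{B}^2(n)$ is bipartite: if the two shortest cycles are both even then all cycles are even. For the theta base $B(P_k,P_l,P_m)$ one checks that $g_1,g_2$ even forces $k,l,m$ to share a common parity, whence the remaining cycle is even too; for $B(p,q)$ and $B(p,l,q)$ there is nothing further to verify. Consequently $\varphi_n(G)=0$ throughout $\mathcal{B}^2(n)$, and the only reduced bases that can survive are $B_6=B(P_3,P_3,P_1)$, $B_7=B(P_2,P_2,P_2)$ and $B_8=B(4,4)$.

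First I would apply Step~1: whenever a non-pendant edge $uv$ lies off the cycles, perform the transformation of Definition~\ref{definition3.1} and invoke Theorem~\ref{theorem3.2}. Iterating makes all cut edges pendant, leaves the (even) cycle lengths unchanged, and strictly lowers every $\varphi_i$ with $2\le i\le n-2$; by the Remark following Theorem~\ref{theorem3.2} the base $B(p,l,q)$ is absorbed into $B(p,q)$, so only the bases $B(p,q)$ and $B(P_k,P_l,P_m)$ remain. Next comes Step~2: apply Definition~\ref{definition3.3} to any cycle of length $\ge 6$ and invoke Theorem~\ref{theorem3.4}, shrinking that cycle by $2$ while strictly decreasing the relevant $\varphi_i$. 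Because all cycles are even, iterating drives the $B(p,q)$ branch down to $B_8=B(4,4)$ and the theta branch down to either $B_7$ (all paths even) or $B_6$ (all paths odd, which cannot be shortened further without creating a multi-edge).

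Finally I would concentrate the surviving pendant edges on a single canonical vertex via Lemmas~\ref{lemma4.3} and~\ref{lemma4.4}, producing $B_6(n-6,0,0,0,0,0)$, $B_7(n-5,0,0,0,0)=B_n^2$ and $B_8(n-7,0,0,0,0,0,0)$, each move strict for $2\le i\le n-2$. The sign patterns of the difference polynomials in (5) and (6) then give $\varphi_i(B_8(n-7,0,0,0,0,0,0))\ge\varphi_i(B_6(n-6,0,0,0,0,0))\ge\varphi_i(B_7(n-5,0,0,0,0))$ for all $i$, placing $B_n^2$ at the bottom. For the equality clause I would track which transformations are actually used: if $\overline{G}\cong B(P_2,P_2,P_2)$ only the pendant-sliding of Lemma~\ref{lemma4.4}(2)--(3) is invoked, so equality holds exactly on $\{0,1,n-1,n\}$; otherwise at least one cycle-shortening (Theorem~\ref{theorem3.4}) or one of the comparisons (5)--(6) enters and strictly separates $\varphi_{n-1}$ (the spanning-tree count), so equality among the coefficients that actually vary is confined to $\{0,1\}$, the one caveat being that $\varphi_n\equiv 0$ on $\mathcal{B}^2(n)$ makes the $i=n$ coincidence automatic.

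The step I expect to require the most care is Step~2. One must verify uniformly that an even cycle of length $\ge 6$ always carries a consecutive triple $u\sim v\sim w$ meeting the intersection hypotheses (1)--(3) of Theorem~\ref{theorem3.4} relative to the other cycle, and one must confirm that the all-odd theta genuinely terminates at $B_6$ rather than collapsing into $B_7$. This is exactly what makes the three-way final comparison (5)--(6) nonvacuous, and it is the only place where the argument differs in substance from the $\mathcal{B}^1(n)$ case of Theorem~\ref{theorem5.1}.
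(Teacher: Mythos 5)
Your reduction machinery is essentially sound, and in fact it reconstructs in more detail what the paper only sketches: the parity analysis of the admissible bases, Steps 1 and 2, the pendant-concentration Lemmas~\ref{lemma4.3} and~\ref{lemma4.4}, and the difference polynomials (5)--(6) together constitute a proof of Theorem~\ref{theorem5.3}, i.e.\ that $B_n^2$ minimizes every signless Laplacian coefficient in $\mathcal{B}^2(n)$ with the stated equality pattern. (The paper dismisses that proof with ``similar to the proof of Theorem~\ref{theorem5.1},'' so your explicit observations --- that both short cycles even forces bipartiteness, that the theta bases split into the all-even case terminating at $B_7$ and the all-odd case terminating at $B_6$, and that $B(4,4)=B_8$ is the surviving $\mathcal{B}_1$-type base --- are a legitimate filling-in of that sketch.)

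The gap is that the statement you were asked to prove concerns the incidence energy $IE$, and your proposal never mentions $IE$ at all: every inequality you derive is between coefficients $\varphi_i$, and nothing in your argument converts coefficient domination into a comparison of $IE(G)$ with $IE(B_n^2)$. That conversion is not automatic from the definition $IE(G)=\sum_{i=1}^n\sqrt{\nu_i(G)}$; it is exactly Theorem~\ref{theorem1.3} (Mirzakhah--Kiani): if $\varphi_i(G)\le\varphi_i(G')$ for all $i$, then $IE(G)\le IE(G')$, strictly as soon as one coefficient inequality is strict. The paper's entire proof of Corollary~\ref{corollary5.4} is the two-line combination of Theorem~\ref{theorem5.3} with Theorem~\ref{theorem1.3}: for $G\in\mathcal{B}^2(n)$ with $G\not\cong B_n^2$, Theorem~\ref{theorem5.3} gives $\varphi_i(G)\ge\varphi_i(B_n^2)$ for all $i$, strictly for every $2\le i\le n-2$ (a nonempty range, since $n\ge 5$ here), whence $IE(G)>IE(B_n^2)$; this yields both minimality and uniqueness. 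As written, your argument stops one step short of the stated conclusion --- it proves Theorem~\ref{theorem5.3}, not Corollary~\ref{corollary5.4}. The omission is repaired by a single citation of Theorem~\ref{theorem1.3} together with the strictness you already tracked, but that citation is the whole point of the corollary and must appear.
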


From Corollary~\ref{corollary5.2} and Corollary~\ref{corollary5.4}
we immediately get the following result.

\begin{corollary}\label{corollary5.5}
If $n\leq 30$, then for $G\in \mathcal{B}(n)$ we have $IE(G)\geq IE(B_n^2)$,
with equality if and only if $G\cong B_n^2$.
If $n\geq 31$, then for $G\in \mathcal{B}(n)$ we have $IE(G)\geq IE(B_n^1)$,
with equality if and only if $G\cong B_n^1$.
\end{corollary}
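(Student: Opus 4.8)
The plan is to reduce the global comparison over $\mathcal{B}(n)$ to a single comparison between $B_n^1$ and $B_n^2$, and then to decide the sign of $IE(B_n^1)-IE(B_n^2)$ as a function of $n$. Since $\mathcal{B}(n)=\mathcal{B}^1(n)\cup\mathcal{B}^2(n)$ with $B_n^1\in\mathcal{B}^1(n)$ (its minimal cycles are two triangles sharing an edge) and $B_n^2\in\mathcal{B}^2(n)$ (the minimal cycles of $K_{2,3}$ are all quadrilaterals), Corollary~\ref{corollary5.2} and Corollary~\ref{corollary5.4} give that every $G\in\mathcal{B}^1(n)$ satisfies $IE(G)\ge IE(B_n^1)$ and every $G\in\mathcal{B}^2(n)$ satisfies $IE(G)\ge IE(B_n^2)$, each with equality only at the named graph. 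Hence the minimizer of $IE$ over all of $\mathcal{B}(n)$ is whichever of $B_n^1,B_n^2$ has the smaller incidence energy, and it is the \emph{unique} minimizer as soon as $IE(B_n^1)\ne IE(B_n^2)$. Everything therefore comes down to the sign of $D(n):=IE(B_n^1)-IE(B_n^2)$ for $n\ge 5$, the smallest order for which $\mathcal{B}^2(n)\ne\emptyset$.

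Next I would read the spectra off the factored signless Laplacian polynomials of $B_n^1=B_3(n-4,0,0,0)$ and $B_n^2=B_7(n-5,0,0,0,0)$ displayed in Section~4. For $B_n^1$ the eigenvalues are $1$ with multiplicity $n-4$, the value $2$, and the three roots of $p_1(x)=x^3-(n+4)x^2+4nx-8$; for $B_n^2$ they are $0$, $1$ with multiplicity $n-6$, $2$ with multiplicity $2$, and the three roots of $p_2(x)=x^3-(n+4)x^2+(5n-2)x-3n$. Because $B_n^1$ is non-bipartite its signless Laplacian is positive definite, while for the bipartite $B_n^2$ only the displayed factor $x$ contributes the eigenvalue $0$; in particular all roots of $p_1$ and $p_2$ are positive, so every square root below is real. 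Writing $S_j$ for the sum of the square roots of the roots of $p_j$, this yields $IE(B_n^1)=(n-4)+\sqrt2+S_1$ and $IE(B_n^2)=(n-6)+2\sqrt2+S_2$, whence
\[
D(n)=2-\sqrt2+S_1(n)-S_2(n).
\]

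To avoid solving the cubics I would handle $S_1,S_2$ through the Vieta relations: a monic cubic with positive roots and elementary symmetric functions $e_1,e_2,e_3$ has $s=\sum\sqrt{\text{root}}$ satisfying $(s^2-e_1)^2=4e_2+8s\sqrt{e_3}$. For $p_1$ this reads $(S_1^2-(n+4))^2=16n+16\sqrt2\,S_1$ and for $p_2$ it reads $(S_2^2-(n+4))^2=20n-8+8\sqrt{3n}\,S_2$. From these I would extract that $D(n)$ is strictly decreasing in $n$ (treated as a real parameter); once this is in hand, only a single sign change must be located. Moreover the asymptotics $S_1\to\sqrt n+2$ and $S_2\to\sqrt n+\sqrt{5+2\sqrt3}$ give the closed-form limit $D(n)\to 4-\sqrt2-\sqrt{5+2\sqrt3}\approx-0.323<0$, so $D$ is eventually negative. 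Combined with monotonicity, the whole statement reduces to the two endpoint evaluations $D(30)>0$ and $D(31)<0$: the former then forces $D(n)>0$ for all $5\le n\le 30$ and the latter forces $D(n)<0$ for all $n\ge 31$.

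I expect the endpoint at $n=30$ to be the main obstacle: numerically $D(30)\approx+7.6\times10^{-4}$ whereas $D(31)\approx-4.1\times10^{-3}$, so the positive sign at $n=30$ is razor-thin and cannot be settled by naive floating point. The safe route is to produce rigorous rational enclosures of the three roots of $p_1$ and of $p_2$ at $n=30$ to about four decimal places (for instance by checking the sign of $p_j$ at rational endpoints of root-separating intervals on which $p_j$ is monotone), to bound the corresponding square roots above and below, and thereby to certify $S_1(30)-S_2(30)>\sqrt2-2$ strictly. The remaining ingredients — the monotonicity of $D$ and the comfortable inequality $D(31)<0$ — are routine by comparison, leaving only this tight certification at the crossover as the delicate point.
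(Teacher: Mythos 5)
Your reduction coincides with the paper's: both arguments invoke Corollaries~\ref{corollary5.2} and~\ref{corollary5.4} to shrink the problem to the sign of $D(n)=IE(B_n^1)-IE(B_n^2)$, and both read $IE(B_n^1)=(n-4)+\sqrt2+S_1$ and $IE(B_n^2)=(n-6)+2\sqrt2+S_2$ off the factored polynomials of Section~4, where $S_1,S_2$ are the root-square-root sums of the two cubics. Your numerical anchors are also correct: $D(30)\approx 7.7\times10^{-4}$, $D(31)\approx-4.1\times10^{-3}$, and $D(n)\to 4-\sqrt2-\sqrt{5+2\sqrt3}<0$, consistent with the paper's crossover at $n=31$; the Vieta identity $(s^2-e_1)^2=4e_2+8s\sqrt{e_3}$ is likewise correct. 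Where you part ways with the paper is the mechanism for settling the sign at \emph{every} $n$: the paper checks each $n\le 30$ by direct computation (Matlab) and, for $n\ge 31$, asserts interval enclosures for the six roots, uniform in $n$, from which it claims $\sum_{i=1}^3(\sqrt{\beta_i}-\sqrt{\alpha_i})\ge 0.5899>2-\sqrt2$; you instead propose strict monotonicity of $D$ plus two endpoint evaluations.

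The genuine gap is that monotonicity claim. It is the load-bearing step — without it your checks at $n=30$ and $n=31$ say nothing about any other $n$, in particular nothing about the infinite family $n\ge 32$ — yet it is only asserted (``I would extract that $D(n)$ is strictly decreasing''), and it is not routine. Implicit differentiation of your quartic relations expresses $S_j'(n)$ as a ratio involving $S_j$ and $t_j:=(S_j^2-(n+4))/2$, for instance $S_1'=(t_1+4)/(2S_1t_1-4\sqrt2)$, and deciding the sign of $S_1'-S_2'$ then requires quantitative bounds on precisely the quantities you were trying to avoid estimating; note also that the roots move in opposite directions as $n$ grows ($\alpha_1,\alpha_2$ increase while $\alpha_3$ decreases, and likewise for the $\beta_i$), so monotonicity of $D$ is a cancellation phenomenon, not a termwise fact. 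If you drop monotonicity, the range $5\le n\le 30$ can be salvaged by certifying all $26$ cases individually with your rational-enclosure method (a rigorous version of the paper's Matlab step), but $n\ge 31$ cannot be reduced to finitely many checks: one needs an estimate uniform in $n$, which is exactly the role of the paper's root intervals. Your instinct that this region is delicate is sound — the margin at $n=31$ is only about $0.004$, and in fact the paper's stated intervals, applied naively, yield a lower bound of roughly $0.553$ for the sum, short of $2-\sqrt2\approx0.5858$, so even the paper's uniform estimate is strained as written — but your proposal replaces that estimate with an unproven lemma rather than with a proof.
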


\begin{proof}
From Corollary~\ref{corollary5.2} and Corollary~\ref{corollary5.4},
for $G\in \mathcal{B}(n)$ we have $IE(G)\geq \mbox{min }\{IE(B_n^1), IE(B_n^2)\}$,
with equality if and only if $G\cong B_n^1$ or $G\cong B_n^2$.

From $Q(B_n^1,x)$ and $Q(B_n^2,x)$ in Section 4, we have
$$IE(B_n^1)=(n-4)+\sqrt{2}+\sqrt{\alpha_1}+\sqrt{\alpha_2}+\sqrt{\alpha_3},$$
$$IE(B_n^2)=(n-6)+2\sqrt{2}+\sqrt{\beta_1}+\sqrt{\beta_2}+\sqrt{\beta_3},$$
where $\alpha_1\geq \alpha_2\geq \alpha_3$ are the roots of $x^3-(n+4)x^2+4nx-8=0$,
$\beta_1\geq \beta_2\geq \beta_3$ are the roots of $x^3-(n+4)x^2+(5n-2)x-3n=0$.

For $n\leq 30$, by Matlab 7.0 it is easy to see $IE(B_n^1)>IE(B_n^2)$ holds.

For $n\geq 31$, it is easy to see that
$n\leq \alpha_1\leq n+0.01$, $3.93\leq \alpha_2\leq 4$, $0\leq \alpha_3\leq 0.066$,
and $n-1 \leq \beta_1 \leq n-0.995$, $4.27\leq \beta_2 \leq 4.31$,
$0.697 \leq \beta_3 \leq 0.726$,
and $0.5899\leq \sum_{i=1}^3 (\sqrt{\beta_i}-\sqrt{\alpha_i})\leq 1$.

Then we have $IE(B_n^2)-IE(B_n^1)=\sqrt{2}-2+\sum_{i=1}^3 (\sqrt{\beta_i}-\sqrt{\alpha_i})
\geq 0$.
\end{proof}

\end{document}